\newcommand{\basetheorem}[3]{
	\newtheorem{#1}{#2}[#3]
	\newtheorem*{#1*}{#2}
	\expandafter\def\csname #1autorefname\endcsname{#2}
}
\newcommand{\maketheorem}[3]{
	\newaliascnt{#1}{#3}
	\newtheorem{#1}[#1]{#2}
	\aliascntresetthe{#1}
	\expandafter\def\csname #1autorefname\endcsname{#2}
	\newtheorem{#1*}{#2}
}
\newcommand{\newreptheorem}[2]{\newtheorem*{rep@#1}{\rep@title}\newenvironment{rep#1}[1]{\def\rep@title{#2 \ref*{##1}}\begin{rep@#1}}{\end{rep@#1}}}
\newcommand{\D}{\Delta}
\newcommand{\Vol}{\textup{Vol}}
\newcommand{\Sing}{\textup{Sing}}
\newcommand{\nklt}{\textup{Nklt}}
\newcommand{\Ht}[1]{H^{i}_{t}}
\newcommand{\Hom}{\textup{Hom}}
\newcommand{\Fe}[1][e]{F^{#1}_{*}}
\newcommand{\ox}[1][X]{\mathcal{O}_{#1}}
\newcommand{\zp}{\mathbb{Z}_{(p)}}
\begin{document}
	\title{On the Boundedness of Globally $F$-split varieties}
	\author{Liam Stigant}
	\address{Department of Mathematics, Imperial College London, 180 Queen's Gate, 
		London SW7 2AZ, UK} 
	\email{l.stigant18@imperial.ac.uk}
	\subjclass[2020]{14J30, 14J32, 14M22 14E30, 14G17, 14B05}
	\date{\today}
	\pagestyle{myheadings} \markboth{\hfill  Liam Stigant
		\hfill}{\hfill On the Boundedness of Globally $F$-split varieties\hfill}
\begin{abstract}
	This paper proposes the use of $F$-split and globally $F$-regular conditions in the pursuit of BAB type results in positive characteristic. The main technical work comes in the form of a detailed study of  threefold Mori fibre spaces over positive dimensional bases. As a consequence we prove the main theorem, which reduces birational boundedness for a large class of varieties to the study of prime Fano varieties. 
\end{abstract}

	\maketitle

	\tableofcontents
	\setlength{\parskip}{0.5em}
	\section{Introduction}
	There has been great success proving boundedness results in characteristic zero using the techniques and results of the LMMP. Beyond dimension $2$, however, there has not been much progress in positive characteristic. This is perhaps a consequence of the relative newness of the LMMP results in this setting, but it  also tells of the existence of difficulties unique to characteristic $p$.
	
	In this direction, we prove the following.
	
	\begin{theorem}\label{Main}
		Fix $0 < \delta, \epsilon <1$. Let $S_{\delta,\epsilon}$ be the set of threefolds satisfying the following conditions
		\begin{itemize}
			\item $X$ is a projective variety over an algebraically closed field of characteristic $p >7, \frac{2}{\delta}$;
			\item $X$ is terminal, rationally chain connected and $F$-split;
			\item $(X,\Delta)$ is $\epsilon$-klt and log Calabi-Yau for some boundary $\Delta$; and
			\item The coefficients of $\Delta$ are greater than $\delta$.
		\end{itemize}
		
		Then there is a set $S'_{\delta,\epsilon}$, bounded over $\text{Spec}(\mathbb{Z})$ such that any $X\in S_{\delta,\epsilon}$ is either birational to a member of $S'_{\delta,\epsilon}$ or to some $X'\in S_{\delta,\epsilon}$, Fano with Picard number $1$. 
	\end{theorem}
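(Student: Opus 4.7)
The plan is to run a $K_X$-minimal model program on $X$, terminating in a Mori fibre space, and then to handle the two endpoints separately: if the MFS is a Picard rank one Fano we fall into the stated exceptional case, and otherwise we use the paper's detailed study of threefold Mori fibre spaces over positive-dimensional bases to place $X$ in a bounded family.

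First I would observe that since $X$ is rationally chain connected, $K_X$ is not pseudo-effective. Under the hypothesis $p>7$, the threefold MMP for terminal pairs is available, so one can run a $K_X$-MMP starting from $X$ and terminate in a Mori fibre space $\phi : X' \to Z$ with $\dim Z \in \{0,1,2\}$. Since $K_X + \Delta \equiv 0$, the $K_X$-negative birational contractions and flips are $\Delta$-positive, and the pushforward $\Delta'$ of $\Delta$ to $X'$ remains an $\epsilon$-klt log Calabi--Yau boundary with coefficients $\geq \delta$. The arithmetic assumption $p > 2/\delta$ provides the uniform control needed to keep $F$-splitness (or at least global $F$-regularity of the relevant sub-boundaries) through the steps, and rationally chain connectedness is birationally preserved, so $X'\in S_{\delta,\epsilon}$ and is birational to $X$.

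Next I would split on the endpoint. If $\dim Z = 0$ and $\rho(X')=1$, then $X'$ is a Picard rank one Fano in $S_{\delta,\epsilon}$, which is the second alternative of the theorem. If $\dim Z = 0$ and $\rho(X')\geq 2$, then since $X'$ is still a rationally chain connected Fano, a further $K_{X'}$-negative extremal contraction produces a Mori fibre space over a positive-dimensional base, reducing us to the case $\dim Z \geq 1$. It thus suffices to show that any MFS $\phi:X'\to Z$ with $\dim Z\in\{1,2\}$ arising from $S_{\delta,\epsilon}$ lies in a set bounded over $\text{Spec}(\mathbb{Z})$, which is then our $S'_{\delta,\epsilon}$. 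For this, the canonical bundle formula endows $Z$ with an $\epsilon'$-klt log Calabi--Yau pair, placing $Z$ in a bounded family of curves or surfaces; the generic fibre is a Fano of dimension $1$ or $2$ with similarly controlled singularities and boundary coefficients; and $F$-splitness descends to $Z$ and to the general fibre by $F$-adjunction. Combining these with a bound on the horizontal part of $\Delta'$ and on the multiplicities of the vertical components should produce the required bounded family.

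The principal obstacle, and the bulk of the paper's technical work, is this last step: leveraging $F$-splitness uniformly across a threefold MFS over a positive-dimensional base. Controlling the vertical/horizontal split of $\Delta'$, the discrepancies produced by the canonical bundle formula, and the transfer of $F$-split and globally $F$-regular behaviour between $X'$, $Z$ and the general fibre, is precisely where the hypothesis $p > 2/\delta$ is used and where the intermediate propositions on threefold Mori fibre spaces developed in the body of the paper must be assembled.
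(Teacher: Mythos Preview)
Your outline matches the paper's strategy at the top level: run a $K_X$-MMP to a Mori fibre space $X' \to Z$, separate off the case $\dim Z = 0$ as the Picard-rank-one Fano alternative, and invoke the paper's study of Mori fibre spaces over positive-dimensional bases for the rest. Two points of confusion, however. First, the case $\dim Z = 0$ with $\rho(X') \geq 2$ never arises: a Mori fibre space has relative Picard rank one, so if $Z$ is a point then $\rho(X') = 1$ automatically and there is nothing further to do. Second, and more substantively, you misattribute the role of the hypothesis $p > 2/\delta$. Preservation of $F$-splitness under birational maps and under fibrations holds for general reasons (see the pushforward results in the section on $F$-singularities) and needs no arithmetic hypothesis on $p$. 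The bound $p > 2/\delta$ enters instead through Lemma~\ref{S2}: it forces each horizontal component of $\Delta'$ to have degree less than $p$ over $Z$ when $\dim Z = 2$, making the conic bundle \emph{tame}; tameness is then exactly what is needed for the canonical bundle formula (Theorem~\ref{cbf}) and for adjunction to the pullback of a general hyperplane (Corollary~\ref{tameAdjunction}) to go through.

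Your final paragraph correctly flags where the real work lies, but the mechanism you sketch (bound horizontal and vertical multiplicities of $\Delta'$ and ``combine'') is not how the paper proceeds and is too vague to stand as a proof. Having bounded the base $Z$ (trivially when $Z$ is a curve, and via Theorem~\ref{SBAB} when $Z$ is a surface, using the $\epsilon$-LCY structure on $Z$ supplied by Theorem~\ref{cbf}), the paper fixes a general very ample $H$ on $Z$, sets $G = f^*H$, and proves in Theorem~\ref{setup} that $-3K_{X'} + tG$ defines a birational map of bounded volume for a constant $t$ depending only on $\epsilon$ and $c$. The ingredients are Jiang's surface result (Theorem~\ref{Jiang}) to show that the non-klt locus of $(X',(1+\lambda)\Delta')$ is vertical for a uniform $\lambda$, the Nlc Cone Theorem (Theorem~\ref{WCT}) to bound $-K_{X'}$-negative extremal rays and hence produce a uniform $n$ with $-K_{X'}+nG$ ample, vanishing for $F$-split varieties (Lemma~\ref{vanish}) to lift sections from $G$, and the anticanonical volume bounds of Theorems~\ref{J1} and~\ref{J2}. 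These are the specific results you would need to assemble; your sketch does not yet identify them.
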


	The constraints on the characteristic of the field are required to control the singularities arising in terminal Mori fibrations. In particular the $p>7$ requirement ensures that terminal del Pezzo fibrations have generically smooth fibres and the $p> \frac{2}{\delta}$ is needed to control the singularities appearing in the base of a conic bundle. This in turn allows for lower dimensional boundedness results to be applied.

	The condition that $X$ be terminal is to allow us to reduce to the case that $X$ is a terminal Mori fibre space. While we might normally achieve this by taking a terminalisation $\tilde{X} \to X$, we cannot do so while also ensuring that the coefficients of $\tilde{\D}$ are still bounded below. In fact while bounding the coefficients below is used to prove a canonical bundle formula for Mori fibre spaces of relative dimension $1$ it is in many ways the relative dimension $2$ case that forces the assumption $X$ is terminal.
		
	If $(X,\Delta) \to S$ is a klt Mori fibre space with coefficients bounded below by $\frac{2}{p}$ then we may freely take a terminalisation and run an MMP to obtain a tame conic bundle, which is what we require for our boundedness proof. If however the relative dimension is $2$ then after taking a terminalisation and running an MMP we may end with a Mori fibration of relative dimension $1$, where we cannot easily control the singularities of the base. This happens whenever $X$ is singular along a curve $C$ which maps inseparably onto the base and we expect this is the only way it might happen. 	

	The main motivation for this result comes from \cite{chen2018birational} where a similar result is proven in the characteristic zero setting. More generally we have the following generalisation of BAB, which essentially appeared in \cite{mckernan2003threefold} and remains unsolved even in characteristic $0$. 
	 
	\begin{conjecture}
		Fix $\kappa$, an algebraically closed field of characteristic $0$, let $d$ be a natural number and take $\epsilon$ a positive real number. Then the projective varieties $X$ over $\kappa$ such that 
		\begin{itemize}
			\item $X$ has dimension $d$;
			\item $(X,B)$ is $\epsilon$-klt for some boundary $B$;
			\item $-(K_{X}+B)$ is nef; and
			\item $X$ is rationally connected;
		\end{itemize}
		are bounded.	
	\end{conjecture}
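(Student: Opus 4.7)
The natural plan is to try to reduce the conjecture to Birkar's BAB theorem for $\epsilon$-klt weak log Fano varieties, via MMP and induction on dimension. Run a $(K_X+B)$-MMP $X \dashrightarrow X'$. Since $-(K_X+B)$ is nef, nefness of $-(K_{X'}+B')$ is preserved at each step, and termination (in characteristic zero) produces either a minimal model with $K_{X'}+B' \equiv 0$ or a Mori fibre contraction $f \colon X' \to Y$ onto a strictly smaller-dimensional base. Rational connectedness and $\epsilon$-klt both descend along this MMP in the appropriate sense.

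If $-(K_{X'}+B')$ turns out to be nef and big, so $(X', B')$ is a genuine $\epsilon$-klt weak log Fano pair, Birkar's theorem bounds $X'$ directly. If instead $K_{X'}+B' \equiv 0$, one would like to use rational connectedness to perturb $B'$ by a small multiple of a suitable mobile divisor, producing a weak log Fano structure and reducing to Birkar again. In the Mori fibre space case, I would proceed by induction on dimension: the base $Y$ is rationally connected (as the image of a rationally connected variety), and the canonical bundle formula
$$K_{X'} + B' \sim_{\mathbb{Q}} f^{*}(K_Y + B_Y + M_Y)$$
realises $-(K_Y + B_Y + M_Y)$ as nef, with $(Y, B_Y + M_Y)$ an $\epsilon'$-klt pair via Ambro--Filipazzi-type control of the moduli and discriminant. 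The general fibre of $f$ is an $\epsilon$-klt Fano to which BAB applies, and induction bounds the base.

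The main obstacles — and the reason this conjecture remains open even in characteristic zero — sit at the two extremes of this strategy. First, the rationally connected log Calabi--Yau case is itself a substantial open problem: rational connectedness does not, on its own, force $-K_{X'}$ to be big, so the perturbation into weak log Fano territory need not be available, and boundedness of $\epsilon$-klt rationally connected CY pairs is not known. Second, in the Mori fibre space case, combining a bounded base with bounded general fibres does not imply boundedness of the total space $X'$; one needs uniform control of the family $X' \to Y$, in particular of the moduli part $M_Y$ and of the variation of the polarised Fano fibres. This is precisely the gap at which \cite{chen2018birational} settles for birational boundedness rather than genuine boundedness, and closing it would likely require new tools for bounding families of Fano and Calabi--Yau varieties with controlled singularities and fixed twisting data.
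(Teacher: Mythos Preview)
The statement you were asked to prove is a \emph{conjecture}, not a theorem: the paper explicitly says it ``essentially appeared in \cite{mckernan2003threefold} and remains unsolved even in characteristic $0$.'' There is no proof in the paper to compare against, and you have correctly recognised this, offering instead a sketch of the natural MMP-plus-induction strategy together with an honest account of where it breaks down. That is the right response here.

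Your identification of the two principal obstacles --- the rationally connected log Calabi--Yau case, and the failure of ``bounded base plus bounded fibre implies bounded total space'' in the Mori fibre space case --- matches the standard understanding and is consistent with the paper's own framing, which uses this conjecture only as motivation before retreating to the more tractable $F$-split setting in positive characteristic and to \emph{birational} boundedness. One small quibble: the claim that nefness of $-(K_{X'}+B')$ is preserved at each step of a $(K_X+B)$-MMP deserves more care; this is not automatic for arbitrary MMP steps, though in the present situation the dichotomy you state (either $K_X+B\equiv 0$, in which case nothing moves, or $K_X+B$ is not pseudo-effective and one lands on a Mori fibre space) does hold since a pseudo-effective divisor with nef negative is numerically trivial. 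Otherwise your discussion is a fair summary of why this remains open.
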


	With the LMMP for klt pairs known in dimension $3$ and characteristic $p>5$, it is natural to turn our attention to results and conjectures of this type in positive and mixed characteristic. There are several major problems one would face in the pursuit of such a result, even in the weaker case of birational boundedness in dimension $3$, which do not arise in characteristic zero. Perhaps the most immediate is that $X$ rationally connected no longer removes the possibility that $K_{X} \not\equiv 0$. For example, in positive characteristic there are families of K3 surfaces which are rationally connected. It is not clear then, even in dimension $2$, that such a result would hold.
	
	It is also very difficult to control the singularities of the base, and indeed the fibres, of a Mori fibre space, which makes proofs of an inductive nature very challenging. The failure of Kawamata-Viehweg vanishing presents a similar difficulty.
	
	Unique to positive characteristic, we have singularities characterised by properties of the Frobenius morphism. In particular there are notions of globally $F$-split and globally $F$-regular which can be thought of as positive characteristic analogues of lc log Calabi-Yau varieties and klt log Fano varieties. While the exact nature of this analogy is the subject of a variety of results and conjectures, it is expected, and often known, that these varieties should behave similarly to their characteristic zero counterparts.
	
	Most notably, in this context, the $F$-split and globally $F$-regular conditions are preserved under the steps of the LMMP including Mori fibrations. In fact the conditions are also preserved under taking a general fibre of a fibration. They also come naturally equipped with vanishing theorems, with globally $F$-regular pairs satisfying full Kawamata-Viehweg vanishing. 
	
	We also have some relevant characterisations of uniruled $F$-split varieties. If $X$ is smooth it cannot be simultaneously $F$-split, Calabi-Yau and uniruled. In particular, an $F$-split, canonical surface cannot be uniruled and have pseudo-effective canonical divisor. 
	
	In many ways then, global $F$-singularities begin to resolve the most obvious difficulties in proving positive characteristic boundedness results. They present their own problems however, there is no satisfactory notion of ``$\epsilon$-$F$-split'' or ``$\epsilon$-globally $F$-regular'' which makes it difficult to work solely with these notions in the context of boundedness.
	
	That said, while the $F$-split and globally $F$-regular conditions fit naturally into the study of log pairs, we may also choose to consider them as properties of the underlying base varieties. In such a way we may formulate the following questions, though in practice even the most optimistic might expect further conditions on the characteristic. One could also reasonably ask that the $\epsilon$-klt pair $(X,B)$ is itself $F$-split, or globally $F$-regular, in place of the base variety.
	
	\begin{question}\label{Q1}
		Fix $d$ a natural number and $\epsilon$ a positive real number. Then is the set, $S$, (resp. $S'$) of projective varieties $X$ such that $(1)-(4)$ (resp. $(1),(2),(3'),(4')$) hold bounded over $\mathbb{Z}$?
		\begin{enumerate}[label=(\arabic*)]
			\item $X$ has dimension $d$ over some closed field $\kappa$.
			\item $(X,B)$ is $\epsilon$-klt for some boundary $B$.
			\item $-(K_{X}+B)$ is big and nef.
			\item[$(3')$] $K_{X}+B\equiv 0$.
			\item If $\kappa$ has characteristic $p>0$, then $X$ is globally $F$-regular.
			\item[$(4')$] If $\kappa$ has characteristic $p>0$, then $X$ is $F$-split and rationally chain connected.
		\end{enumerate}
	\end{question}

	\begin{remark}
		Here rationally chain connected is chosen over rationally connected in light of \cite{gongyo2015rational} which shows that globally $F$-regular threefolds are rationally chain connected in characteristic $p > 7$. Further in characteristic zero, under mild assumptions on the singularities (X admits a boundary $\D$ with $(X,\D)$ dlt), rational chain connectedness coincides with rational connectedness so this is still a natural generalisation. In any case, in dimension $3$ the globally $F$-regular condition is strictly stronger than $F$-split and rationally chain connected whenever the characteristic is greater than $7$. 
		
		In fact other than the case of Fano varieties of Picard number $1$, Gongyo et al are able to show separable rational connectedness. This might, therefore, also be a natural condition to impose instead, especially since the classical proof of the boundedness of characteristic zero prime Fano threefolds so heavily relies on the existence of a free curve. 
	\end{remark}
	
	Given \autoref{Q1}, it is natural to ask what can be gleaned from \autoref{Main} about globally $F$-regular varieties of the type described in \autoref{Q1}. Unfortunately the answer is very little, while every globally $F$-regular variety is $F$-split and if $X$ is of $\epsilon$-log Fano type it is also of $\epsilon$-LCY type, we cannot sensibly ensure that the resulting $\epsilon$-LCY pair $(X,\Delta)$ has coefficients bounded below, even if we require it for the pair $\epsilon$-log Fano pair $(X,\Delta')$.\\
	
	As part of this work we prove the following weak BAB result in \autoref{J1} and \autoref{J2}. This draws heavily on the arguments of Jiang in \cite{jiang2014boundedness}.
	
	\begin{theorem}\label{Main2}
		Fix $0 < \delta, \epsilon <1$ and let $T_{\delta,\epsilon}$ be the set of threefold pairs $(X,\Delta)$ satisfying the following conditions
		\begin{itemize}
			\item $X$ is projective over a closed field of characteristic $p >7,\frac{2}{\delta}$;
			 \item $X$ is terminal, rationally chain connected and $F$-split;
			\item $(X,\Delta)$ is $\epsilon$-klt and LCY;
			\item The coefficients of $\Delta$ are greater than $\delta$; and
			\item $X$ admits a Mori fibre space structure $X \to Z$ where $Z$ is not a point.
		\end{itemize}
	Then the set $\{\Vol(-K_{X}) \colon \exists \Delta \text{ with } (X,\Delta) \in T_{\delta,\epsilon}\}$ is bounded above.
	\end{theorem}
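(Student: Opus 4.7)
The plan is to follow the approach of Jiang \cite{jiang2014boundedness}, splitting according to the relative dimension of the Mori fibration $f\colon X\to Z$, and in each case combining a canonical bundle formula on the base with boundedness of the general fibre. Since $\dim Z \geq 1$, either $\dim Z = 2$ (conic bundle case) or $\dim Z = 1$ (del Pezzo fibration case); the canonical bundle formula inputs needed in each setting are precisely the content of Theorems \ref{J1} and \ref{J2} referred to in the introduction.

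The first step in either case is to apply the canonical bundle formula to $f$. The condition $K_X+\Delta\equiv 0$ descends to $K_Z+B_Z+M_Z\equiv 0$, and the assumption that $(X,\Delta)$ is $\epsilon$-klt with coefficients bounded below by $\delta$ must be used to control both the coefficients of the discriminant $B_Z$ and the singularities of $(Z,B_Z)$. The resulting pair $(Z,B_Z)$ should be $\epsilon'$-klt log Calabi-Yau for some $\epsilon'>0$ depending only on $\epsilon$ and $\delta$, with coefficients bounded below by an explicit function of $\delta$.

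In the conic bundle case, the terminal hypothesis together with the MMP places us, as noted in the remark after Theorem \ref{Main}, in the setting of a tame conic bundle with general fibre $\mathbb{P}^1$ and $-K_X$ of relative degree $2$. One then bounds $(-K_X)^3$ by comparing it to $\text{Vol}(-K_Z)$ through the fibration structure: horizontal curves meet a fibre in bounded degree, while vertical components of $-K_X\equiv \Delta$ are pulled back from $Z$ and are controlled by the canonical bundle formula. Surface BAB applied to the $\epsilon'$-klt log Calabi-Yau pair $(Z,B_Z)$ then bounds $\text{Vol}(-K_Z)$ and hence $\text{Vol}(-K_X)$. In the del Pezzo fibration case the general fibre $F$ is, by adjunction, an $\epsilon$-klt log Calabi-Yau del Pezzo surface and therefore bounded, so $(-K_X)^2 \cdot F = (-K_F)^2$ is bounded; the canonical bundle formula on the curve $Z$ forces $Z\cong \mathbb{P}^1$, and an analogous horizontal/vertical decomposition of $-K_X\equiv\Delta$ yields the bound.

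The main obstacle I expect is the canonical bundle formula itself. In positive characteristic the moduli part $M_Z$ is substantially more delicate than in characteristic zero, where one has Hodge-theoretic positivity, and the hypotheses that $X$ be terminal and $F$-split with $\delta$-bounded coefficients are precisely what is needed to push a Kawamata-style positivity argument through; this is the technical heart of Theorems \ref{J1} and \ref{J2}. A secondary concern is that the argument depends on positive-characteristic BAB statements for the base surface and for del Pezzo fibres, but the $F$-split condition propagates to both the base and the general fibre, which together with the characteristic range $p > 7, 2/\delta$ should suffice.
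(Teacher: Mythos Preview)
Your high-level split by relative dimension and the use of the canonical bundle formula together with surface BAB to control the base in the conic case are both correct and match the paper. However, there is a genuine gap in how you propose to pass from boundedness of the base to boundedness of $\text{Vol}(-K_X)$, and a misidentification of what Theorems~\ref{J1} and~\ref{J2} actually do.

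You describe the volume bound as coming from a direct intersection-theoretic comparison: decompose $-K_X\equiv\Delta$ into horizontal and vertical parts, observe the horizontal part has bounded fibre degree, claim the vertical part is pulled back from $Z$, and compare $(-K_X)^3$ to $\text{Vol}(-K_Z)$. This does not go through. Vertical components of $\Delta$ need not be pullbacks, and even granting this, expanding $(\Delta^h+\Delta^v)^3$ produces mixed terms like $(\Delta^h)^3$ and $(\Delta^h)^2\cdot\Delta^v$ that are not controlled by fibre degree and base data alone. You also describe Theorems~\ref{J1} and~\ref{J2} as ``Kawamata-style positivity'' statements for the moduli part; they are not. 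The canonical bundle formula is Theorem~\ref{cbf}, a separate result, and its only role is to make $(Z,\Delta_Z)$ $\epsilon$-klt so that surface BAB furnishes a very ample $A$ on $Z$ with $A^2\leq c$. Theorems~\ref{J1} and~\ref{J2} \emph{are} the volume bounds, and their proofs follow Jiang's method: assume $\text{Vol}(-K_X)$ is large, use section-counting along the short exact sequences for $-mK_X-nG$ (resp.\ $-mK_X-nF$) to produce an effective $D\sim_{\mathbb{Q}}-K_X-tG$ with $t$ large, build an auxiliary pair whose anti-log-canonical divisor is still ample, apply the connectedness lemma to force a non-klt centre dominating $Z$, and then restrict to a general fibre, where the lct bound (degree $2$ on $\mathbb{P}^1$ in the conic case; the Ambro--Jiang inequality, Theorem~\ref{aj}, in the del Pezzo case) gives the contradiction. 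This contradiction machinery is the missing idea in your proposal.
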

	\begin{remark}
		Together with the observation that taking a terminalisation and running a $K_{X}$-MMP can only increase the anti-canonical volume, we reduced weak BAB for varieties in $S_{\D,\epsilon}$ to the case of prime Fano varieties of $\epsilon$-LCY type. Over a fixed field, however, this is essentially superseded by the result of \cite{das2018boundedness}, which gives weak BAB for varieties $X$ with $K_{X}+\D \equiv 0$ for some boundary $\D$ taking coefficients in a DCC set and making $(X,\D)$ klt.
		
		Results similar to \autoref{Main} and \autoref{Main2} are proven in \cite[Theorem 1.7, Theorem 1.8]{ZhuangFano} for Fano threefolds satisfying certain conditions on the Seshadri constant at a smooth closed point. Further these conditions are closely related to global $F$-regularity by \cite[Theorem 1.3]{ZhuangFano}.
	\end{remark}

	We begin by collecting some relevant definitions and results for later usage. Then \autoref{S-CB} establishes key results about the behaviour of conic bundles in sufficiently high characteristic. Next \autoref{S-MFS} contains the key boundedness arguments, with weak BAB deferred to \autoref{S-BAB}. Finally \autoref{Main} is proved in \autoref{S-res}.
	
\section{Definitions}
	
\subsection{MMP Singularities}
Here $\mathbb{K}$ will be taken to mean either $\mathbb{R}$ or $\mathbb{Q}$. If no field is specified, it is taken to be $\mathbb{R}$. We outline the key notions of singularity arising in the MMP.

\begin{definition}
	Let $X$ be a normal variety. A \emph{$\mathbb{K}$-boundary} is an effective $\mathbb{K}$-divisor $\Delta$ where $K_{X}+\Delta$ is $\mathbb{K}$-Cartier and the coefficients of $\Delta$ are at most $1$. 
	
	A \emph{$\mathbb{K}$ pair} is a couple $(X,B)$ where $X$ is normal and $B$ is a $\mathbb{K}$-boundary.
	
	If $B$ is not effective but $(X,B)$ would otherwise be a $\mathbb{K}$ pair we call it a \emph{$\mathbb{K}$ sub pair}.
\end{definition}

Since $K_{X}+\Delta$ is $\mathbb{R}$-Cartier, we may pull it back along any morphism $\pi\colon  Y \to X$. If $\pi$ is birational then there is a unique choice of $\Delta_{Y}=\sum -a(Y,E,X,\Delta)E$ which agrees with $\Delta$ away from the exceptional locus of $\pi$ such that $\pi^{*}(K_{X}+\Delta)\sim_{\mathbb{R}}K_{Y}+\Delta_{Y}$. In a slight abuse of notation we write $f^{*}(K_{X}+\Delta)=(K_{Y}+ \Delta_{Y})$.

Suppose that $f\colon Y \to X$ is birational morphism of normal varieties and there is a some normal variety $Z$ with $g\colon Z\to Y$. If $E$ is a divisor on $Y$ with strict transform $E'$ on $Z$ then $a(Z,E',X,\Delta)=a(Z,E',Y,\Delta_{Y})=a(Y,E,X,\Delta)$. We may view then the coefficients $a(Y,E,X,\Delta)$ as being independent of $Y$ and write $a(E,X,\Delta)$ instead. 

\begin{definition}
	Given a sub pair $(X,\Delta)$ we define the \emph{discrepancy} $$\text{Disc}(X,\Delta):=\inf \{a(E,X,\Delta) \text{ such that } E \text{ is exceptional and has non-empty center on } X\}$$
	and the \emph{total discrepancy} 
	
	$$\text{TDisc}(X,\Delta):=\inf \{a(E,X,\Delta) \text { such that } E \text{ has non-empty center on } X\}$$
\end{definition}

We then use these to define a suite of singularities.

\begin{definition}
	Let $(X,\Delta)$ be a (sub) pair then we say that $(X,\Delta)$ is
	\begin{itemize}
		\item 	\emph{(Sub) terminal} if $\text{Disc}(X,\Delta) > 0$.
		\item	\emph{(Sub) canonical} if $\text{Disc}(X,\Delta)\geq 0$.
		\item   \emph{(Sub) plt} if $\text{Disc}(X,\Delta) > -1$.
		\item	\emph{(Sub) $\epsilon$-klt} if $\text{TDisc}(X,\Delta) > \epsilon-1$.
		\item	\emph{(Sub) $\epsilon$-lc} if $\text{TDisc}(X,\Delta) \geq \epsilon -1$.
	\end{itemize}
\end{definition}
For $\epsilon=0$ we say klt, lc respectively.

When we have resolution of singularities there is another, more practical version.

\begin{definition}
	Let $(X,\Delta)$ be a (sub) pair and $\pi\colon Y\to X$ a log resolution of $(X,\Delta)$. Let $$t=\min\{a(E,X,\Delta) \text{ such that } E \text{ is a divisor on } Y\}$$ and $$d=\min\{a(E,X,\Delta) \text{ such that } E \text{ is an exceptional divisor of } \pi\colon Y\to X\}.$$
	Then $(X,\Delta)$ is
	\begin{itemize}
		\item	\emph{(Sub) $\epsilon$-klt} if $t > \epsilon-1$;
		\item	\emph{(Sub) $\epsilon$-lc} if $t\geq \epsilon -1$.
	\end{itemize}
	If $\Delta=0$ then $X$ is 
	\begin{itemize}
		\item 	\emph{terminal} if $d > 0$;
		\item	\emph{canonical} if $d\geq 0$;
		\end{itemize}
\end{definition}

This also gives rise to an additional notion of singularity, which is dependent on the choice of resolution and can be thought of as the limit of a klt pair.

\begin{definition}
	A pair $(X,\Delta)$ is called \emph{dlt} if there is a log resolution $\pi\colon Y \to X$ of $(X,\Delta)$ with $K_{Y}+\Delta_{Y}=\pi^{*}(K_{X}+\Delta)$ such that $\text{Coeff}_{E}(\Delta_{Y}) < 1$ for every $E$ exceptional.  
\end{definition}

If $(X,\Delta)$ is sub klt or sub lc etc and $\pi\colon Y \to X$ is a birational morphism from a normal variety and $K_{Y}+\Delta_{Y}=\pi^{*}(K_{X}+\Delta)$ then $(Y,\Delta_{Y})$ has the same singularities. Conversely we have the following.

\begin{lemma}\cite[Lemma 3.38]{kollar2008birational}\label{comparisonLemma}
	Suppose $(X,\Delta),(X',\Delta')$ are pairs equipped with proper birational morphisms $f\colon X \to Y$ and $f'\colon X'\to Y$ with $f_{*}\Delta=f'_{*}\Delta'$.
	
	Suppose further that $-(K_{X}+\Delta)$ is $f$ nef and $(K_{X'}+\Delta')$ is $f'$ nef. Then $a(E,X,\Delta) \leq a(E,X',\Delta')$ for any $E$ with non-trivial center on $Y$.
\end{lemma}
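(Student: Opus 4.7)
The plan is a standard application of the Negativity Lemma on a common resolution. First I would take a normal projective variety $W$ equipped with proper birational morphisms $g\colon W\to X$ and $g'\colon W\to X'$ resolving the induced birational map $X\dashrightarrow X'$, so that $h:=f\circ g=f'\circ g'\colon W\to Y$. Writing the crepant pullbacks $K_W+\Delta_W=g^{*}(K_X+\Delta)$ and $K_W+\Delta'_W=(g')^{*}(K_{X'}+\Delta')$, the inequality $a(E,X,\Delta)\leq a(E,X',\Delta')$ translates, via the identity $a(E,\cdot)=-\mathrm{Coeff}_E$, into showing that
\[ D:=\Delta_W-\Delta'_W=g^{*}(K_X+\Delta)-(g')^{*}(K_{X'}+\Delta') \]
has non-negative coefficient along $E$. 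Any divisorial valuation with nontrivial center on $Y$ can be realised as a prime divisor on a sufficiently high such $W$, so it suffices to prove $D\geq 0$ on $W$.

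To apply the Negativity Lemma I need $D$ to be $h$-exceptional and $-D$ to be $h$-nef. For exceptionality, take a prime divisor $E\subset W$ with $h(E)$ a divisor on $Y$; then $E$ is non-exceptional for both $g$ and $g'$, and since the non-exceptional components of a crepant pullback coincide with the strict transform, $\mathrm{Coeff}_E(\Delta_W)=\mathrm{Coeff}_{h(E)}(f_{*}\Delta)$ and $\mathrm{Coeff}_E(\Delta'_W)=\mathrm{Coeff}_{h(E)}(f'_{*}\Delta')$. The hypothesis $f_{*}\Delta=f'_{*}\Delta'$ forces these to agree, so the non-$h$-exceptional part of $D$ vanishes. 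For nefness, any curve $C$ on $W$ contracted by $h$ either lies in a fibre of $g$ (so $C\cdot g^{*}(K_X+\Delta)=0$ by the projection formula) or pushes down to a curve on $X$ contracted by $f$, in which case $C\cdot(-g^{*}(K_X+\Delta))\geq 0$ by $f$-nefness of $-(K_X+\Delta)$. Thus $-g^{*}(K_X+\Delta)$ is $h$-nef, and symmetrically $(g')^{*}(K_{X'}+\Delta')$ is $h$-nef, so their sum $-D$ is $h$-nef.

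With $D$ being $h$-exceptional and $-D$ being $h$-nef, the Negativity Lemma for birational morphisms gives $D\geq 0$, which is precisely the coefficient-wise inequality being sought. The essential step is the invocation of the Negativity Lemma; the rest is bookkeeping. I do not expect any substantive obstacle, though some care is required in choosing $W$ so that both crepant pullbacks are simultaneously defined, which is routine since $K_X+\Delta$ and $K_{X'}+\Delta'$ are $\mathbb{R}$-Cartier by assumption.
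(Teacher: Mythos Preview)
Your argument is correct and is precisely the standard proof via a common resolution and the Negativity Lemma. Note that the paper itself does not supply a proof of this statement: it merely cites \cite{kollar2008birational}, Lemma~3.38, so there is nothing to compare against beyond observing that your write-up is the argument given in that reference.
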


In particular, these notions of singularity are preserved under a $(K_{X}+\Delta)$ MMP. 

\begin{definition}
	A (sub) $\epsilon$-klt pair $(X,\Delta)$ where $K_{X}+\Delta \equiv 0$ is said to be \emph{(sub) $\epsilon$-log Calabi-Yau}, or just (sub) $\epsilon$-LCY. 
	
	If instead $-(K_{X}+\Delta)$ is big and nef, it is said to be \emph{(sub) $\epsilon$-log Fano}. 
\end{definition}
Again for $\epsilon=0$ we just say LCY and log Fano, equally if $\Delta=0$ we drop the log.

Of particular interest is the class of prime Fano varieties which we may think of as Mori fibre spaces over a point.
\begin{definition}
	A terminal Fano variety is said to be \emph{prime} if it has Picard rank $1$.
\end{definition}

\begin{corollary}
	Suppose that $(X,\Delta)$ is (sub) $\epsilon$-LCY and $f\colon X\dashrightarrow X'$ is either a flip or a divisorial contraction then $(X',f_{*}\Delta)$ is (sub) $\epsilon$-LCY.
\end{corollary}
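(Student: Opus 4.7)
The plan is to address the divisorial contraction and flip cases separately; in each, we transport the numerical triviality and the discrepancy bounds from $(X, \Delta)$ to $(X', f_* \Delta)$ by way of a pullback identity on a suitable resolution, with Lemma \ref{comparisonLemma} providing the underlying mechanism.

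For a divisorial contraction $f: X \to X'$ contracting an extremal ray with exceptional divisor $E$, the relative Picard number one assumption ensures $K_{X'} + f_* \Delta$ is $\mathbb{R}$-Cartier, so we may write $K_X + \Delta = f^*(K_{X'} + f_* \Delta) + aE$ for some $a \in \mathbb{R}$. Intersecting with a curve $C \subset E$ contracted by $f$ and using $K_X + \Delta \equiv 0$ gives $aE \cdot C = 0$; the negativity of $E$ against contracted curves then forces $a = 0$. Therefore $K_X + \Delta = f^*(K_{X'} + f_* \Delta)$, from which $K_{X'} + f_* \Delta \equiv 0$ follows by the projection formula, and $a(F, X, \Delta) = a(F, X', f_* \Delta)$ for every divisorial valuation $F$ follows by pulling this equality back to any common resolution, so the (sub) $\epsilon$-klt condition is preserved.

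For a flip $f: X \dashrightarrow X^+$, write $\Delta^+ := f_* \Delta$ and take a common resolution $p: W \to X$, $q: W \to X^+$ (for instance, by resolving the graph of $f$). Because $f$ is an isomorphism in codimension one, the strict transforms of $\Delta$ and $\Delta^+$ on $W$ coincide, and the sets of $p$-exceptional and $q$-exceptional prime divisors on $W$ agree. The difference $D := p^*(K_X + \Delta) - q^*(K_{X^+} + \Delta^+)$ is therefore supported on these common exceptional divisors. For any $q$-exceptional curve $C \subset W$, the projection formula gives $q^*(K_{X^+} + \Delta^+) \cdot C = 0$ directly, while $p^*(K_X + \Delta) \cdot C = (K_X + \Delta) \cdot p_* C = 0$ using $K_X + \Delta \equiv 0$; hence $D$ is $q$-numerically trivial. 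Since $D$ is also $q$-exceptional, the negativity lemma forces $D = 0$. This simultaneously yields the equality of discrepancies (so (sub) $\epsilon$-klt is preserved) and, upon intersecting $q^*(K_{X^+} + \Delta^+) = p^*(K_X + \Delta) \equiv 0$ with lifts to $W$ of arbitrary curves on $X^+$, the numerical triviality $K_{X^+} + \Delta^+ \equiv 0$.

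The main technical obstacle is the flip case: one must justify the identification of $p$- and $q$-exceptional divisors (via the iso-in-codimension-one property) and carefully apply the negativity lemma. Without the global hypothesis $K_X + \Delta \equiv 0$, Lemma \ref{comparisonLemma} alone yields only the inequality $a(F, X, \Delta) \leq a(F, X^+, \Delta^+)$; the numerical triviality is precisely what closes this to an equality and propagates the trivial class to $X^+$.
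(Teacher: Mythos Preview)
Your argument is correct, but the route differs from the paper's. The paper does not separate into explicit curve-intersection and negativity-lemma computations; instead it first establishes that $K_{X'}+\Delta'$ is $\mathbb{R}$-Cartier by invoking the descent of numerically trivial line bundles along extremal contractions: since $r(K_X+\Delta)\equiv_g 0$ for the contraction $g:X\to Y$ of the extremal ray, there is a Cartier $L$ on $Y$ with $g^*L=r(K_X+\Delta)$, and then $r(K_{X'}+\Delta')$ is identified with $L$ (divisorial case) or with $g'^*L$ (flip case). With $\mathbb{R}$-Cartierness and numerical triviality in hand on both sides, the paper applies Lemma~\ref{comparisonLemma} symmetrically to conclude equality of discrepancies.

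Your approach is more hands-on and essentially reproves the content of Lemma~\ref{comparisonLemma} in this numerically trivial situation via the negativity lemma on a common resolution. One point to watch: you assert that $K_{X'}+f_*\Delta$ is $\mathbb{R}$-Cartier in the divisorial case ``by relative Picard number one'' and tacitly assume it in the flip case when you write $q^*(K_{X^+}+\Delta^+)$. This is fine under the standing $\mathbb{Q}$-factoriality hypotheses of the MMP, but note that the flip or contraction here need not be taken with respect to $K_X+\Delta$ itself (indeed $K_X+\Delta\equiv 0$), so Cartierness of $K_{X'}+\Delta'$ is not automatic from the definition of the step. The paper's descent argument handles this uniformly without appealing to $\mathbb{Q}$-factoriality, which is what it buys; your argument buys a more self-contained treatment once Cartierness is granted.
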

\begin{proof}
	Both $(K_{X}+\Delta)$ and $(K_{X'}+\Delta')$ are numerically trivial so it suffices to show that $(K_{X'}+\Delta')$ is $\mathbb{R}$-Cartier by \autoref{comparisonLemma}. 
	
	If $g\colon X \to Y$ is the contraction of an extremal ray and $D\equiv_{g} 0$ is Cartier, there is some $L$ Cartier on $Y$ with $g^{*}L=D$. 
	Suppose first that $f$ is a divisorial contraction. Then $K_{X}+\Delta=f^{*}L$, say, and so $K_{X'}+\Delta'=L$ by the projection formula. 
	
	Otherwise $f$ is a flip and there is $g\colon X \to Y$ a flipping contraction together with $g'\colon X' \to Y$ such that $f=g'^{-1}\circ g$. Hence writing $K_{X}+\Delta=g^{*}L$ again gives $K_{X'}+\Delta'=g'^{*}L$.
	
	In either case, $(K_{X'}+\Delta')$ is $\mathbb{R}$-Cartier.
\end{proof}

We will be interested in LCY varieties in which general points can be connected by rational curves in the following senses.

\begin{definition}
	Let $X$ be a variety over a field $\kappa$. Then $X$ is said to be:
	\begin{itemize}
		\item \emph{Uniruled} if there is a proper family of connected curves $f\colon U \to Y$ where the generic fibres have only rational components together with a dominant morphism $U \to X$ which does not factor through $Y$.
		\item \emph{Rationally chain connected (RCC)} if there is $f\colon U \to Y$ as above such that $u^{2}\colon U \times_{Y} U \to X \times_{k} X$ is dominant.
		\item \emph{Rationally connected} if there is $f\colon U \to Y$ as above witnessing rational chain connectedness such that the general fibres are irreducible.
		\item \emph{Separably rationally connected} if $f$ as above is separable.
	\end{itemize}
\end{definition}

If $X \to X'$ is a dominant morphism from $X$ uniruled/RCC/rationally connected then we may compose $U \to X \to X'$ to see that $X'$ is uniruled/RCC/rationally connected.

\subsection{$F$-Singularities of Pairs}

We now introduce Frobenius singularities, unique to positive characteristic. We focus on the $F$-pure and $F$-split conditions, as $F$-regularity will not be needed.

\begin{definition}
	Given a $\kappa$ algebra $R$ in positive characteristic we denote the Frobenius morphism by $F\colon R\to R$ sending $x \to x^{p}$. Any $R$ module $M$ then has an induced module structure, denoted $F_{*}M$ where $R$ acts as $r.x=F(r)x=r^{p}x$. Finally $R$ is said to be \emph{$F$-finite} if $F_{*}R$ is a finite $R$ module.
	These definitions naturally extend to schemes over $\kappa$. 
\end{definition}

Note that all perfect fields are $F$-finite, and so is every variety over an $F$-finite field.
In this context we can view the Frobenius morphism as a map of $R$ modules $F\colon R \to F_{*}R$. We will also write $F^{e}\colon R \to F_{*}^{e}R$ for the $e^{th}$ iterated Frobenius.
\begin{definition}
	Let $X$ be a variety over an $F$-finite field.
	We say $X$ is:
	\begin{itemize}
		\item \emph{$F$-pure} if the Frobenius morphism $\ox \to F_{*}\ox$ is pure, or equivalently locally split.
		\item \emph{(Globally) $F$-split} if the Frobenius morphism $\ox \to F_{*}\ox$ is split.
	\end{itemize} 
\end{definition}

Being $F$-split is a particularly strong condition, giving the following vanishing result almost immediately.

\begin{lemma}\label{vanish}
	Let $X$ be an $F$-split variety and $A$ an ample $\mathbb{Q}$-Cartier divisor. Then $H^{i}(X,A)=0$ for all $i>0$. 
\end{lemma}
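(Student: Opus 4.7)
The strategy is to use the splitting hypothesis to realise $H^{i}(X,\mathcal{O}_{X}(A))$ as a direct summand of $H^{i}(X,\mathcal{O}_{X}(p^{e}A))$ for every $e\geq 1$, and then invoke Serre vanishing for a Cartier multiple of $A$. First I would iterate the given splitting $\sigma:F_{*}\mathcal{O}_{X}\to\mathcal{O}_{X}$ to produce a splitting of $\mathcal{O}_{X}\to F^{e}_{*}\mathcal{O}_{X}$ for each $e\geq 1$. By normality of $X$ the sheaf $\mathcal{O}_{X}(A)$ is reflexive; tensoring the split inclusion with $\mathcal{O}_{X}(A)$, applying the projection formula on the smooth locus $U\subset X$ (whose complement has codimension $\geq 2$), and pushing forward along the open immersion $j:U\hookrightarrow X$ using that Frobenius commutes with $j$, yields a split injection
\[
\mathcal{O}_{X}(A)\hookrightarrow F^{e}_{*}\mathcal{O}_{X}(p^{e}A)
\]
of reflexive sheaves on $X$. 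Since $F^{e}$ is affine, taking cohomology realises $H^{i}(X,\mathcal{O}_{X}(A))$ as a direct summand of $H^{i}(X,\mathcal{O}_{X}(p^{e}A))$ for every $e\geq 1$.

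Next, I would fix $r>0$ such that $L:=\mathcal{O}_{X}(rA)$ is an ample line bundle, and write $p^{e}=q_{e}r+s_{e}$ with $0\leq s_{e}<r$. Taking reflexive hulls identifies $\mathcal{O}_{X}(p^{e}A)$ with $L^{\otimes q_{e}}\otimes\mathcal{O}_{X}(s_{e}A)$. Since the residues $s_{e}$ take only finitely many values in $\{0,\dots,r-1\}$, Serre vanishing applied uniformly to each of the finitely many coherent sheaves $\mathcal{O}_{X}(sA)$ yields $H^{i}(X,L^{\otimes q}\otimes\mathcal{O}_{X}(sA))=0$ for all $i>0$ and all $q$ sufficiently large. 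Choosing $e$ large enough that $q_{e}$ exceeds this bound forces $H^{i}(X,\mathcal{O}_{X}(p^{e}A))=0$, whence the direct summand $H^{i}(X,\mathcal{O}_{X}(A))$ vanishes.

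The one delicate point, and the main thing requiring care, is constructing the split injection globally when $\mathcal{O}_{X}(A)$ is only reflexive rather than locally free: the projection formula is immediate on the smooth locus, and extension across $X$ relies on both sides being reflexive—itself a consequence of normality of $X$ together with the fact that finite pushforward of a reflexive sheaf on a normal variety is reflexive. Everything else is a direct consequence of $F$-splitting and standard Serre vanishing.
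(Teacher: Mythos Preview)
Your argument is correct and follows the same route as the paper: tensor the Frobenius splitting by $\mathcal{O}_{X}(A)$ to exhibit $H^{i}(X,\mathcal{O}_{X}(A))$ as a summand of $H^{i}(X,\mathcal{O}_{X}(p^{e}A))$, then kill the latter with Serre vanishing. You are in fact more careful than the paper on two points it glosses over---the projection formula for the merely reflexive sheaf $\mathcal{O}_{X}(A)$, and the observation that Serre vanishing applies to $\mathcal{O}_{X}(p^{e}A)$ via the finitely many residue classes of $p^{e}$ modulo the Cartier index $r$---both of which are genuine issues when $A$ is only $\mathbb{Q}$-Cartier.
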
 

\begin{proof}
	By assumption $\ox \to \Fe \ox $ splits, and hence so does $A=A\otimes \ox \to A \otimes \Fe \ox =\Fe A^{p^{e}}$. That is we have $id\colon  A \to \Fe A^{p^{e}} \to A$, and taking cohomology we see that $H^{i}(X,A)$ injects into $H^{i}(X,\Fe A^{p^{e}})=H^{i}(X,A^{p^{e}})$ which vanishes for $e>>0$. 
\end{proof}
Take $X$ a normal variety. To mirror the notion of a boundary we introduce pairs $(\mathcal{L}, \phi)$ where $\mathcal{L}$ is a line bundle and $\phi\colon  \Fe \mathcal{L} \to \ox$. By applying duality on the smooth locus, which contains all the codimension $1$ points we observe that $\Hom_{\ox}(\Fe \mathcal{L},\ox)=H^{0}(X,\mathcal{L}^{-1}((1-p^{e})K_{X}))$. Therefore such a pair corresponds to a divisor $\Delta_{\phi} \geq 0$ with $(p^{e}-1)(K_{X}+\Delta_{\phi}) \sim \mathcal{L}$. \\ Reversing this procedure is slightly more involved. If ${(p^{e}-1)(K_{X}+\Delta) \sim \mathcal{L}}$ (we write this $K_{X}+\Delta \sim_{\zp} \mathcal{L}$) we may obtain $\phi_{\Delta}\colon \Fe \mathcal{L} \to \ox$, however we could also write say $(p^{2e}-1)(K_{X}+\Delta) \sim \mathcal{L'}$ where $\mathcal{L'} \not\sim \mathcal{L}$. We introduce, therefore, the following notion of equivalence.

First, we say that two such pairs, $(\mathcal{L}, \phi)$ and $(\mathcal{L}', \phi')$ are equivalent if:
\begin{itemize}
	\item There is an isomorphism $\psi: \mathcal{L} \to \mathcal{L'}$ such that following diagram commutes; or
	\[\begin{tikzcd}
	\Fe \mathcal{L} \arrow[rd, "\phi"] \arrow[rr, "\Fe \psi"] &     & \Fe \mathcal{L}' \arrow[ld, "\phi'"] \\
	& \ox &                                     
	\end{tikzcd}\]
	\item $\mathcal{L}=\mathcal{L}^{p^{e'}+1}$ and $\phi'\colon \Fe[e+e']\mathcal{L}^{p^{e'}+1}\to \ox$ is the precisely the map given by
	$$\Fe[e+e'](\mathcal{L}\otimes \mathcal{L}^{p^{e'}}) \xrightarrow{\Fe\phi} \Fe \mathcal{L} \xrightarrow{\phi} \ox.$$
	
\end{itemize}

We then expand the notion of equivalence to allow any finite combination of the above equivalences, more precisely we take the transitive closure of our initial relation.
This gives a bijection between equivalence classes of pairs $(\mathcal{L}, \phi)$ and $\Delta \geq 0$ with $(K_{X}+\Delta)$ $\zp$-Cartier. Full details on such pairs can be found in Chapter 16 of Schwede's notes on $F$-singularities \cite{schwede2010f}. 

To extend this framework to allow for sub pairs we can instead work with morphisms $\Fe\mathcal{L} \to K(X)$ where we view $K(X)$ as a constant sheaf on $X$. Given such a morphism $\phi$, we can always find $E \geq 0$ Cartier such that when we twist by $E$ we obtain $\phi'\colon =\Fe(\mathcal{L}((1-p^{e})E)) \to \ox$ and thus associate a divisor $\Delta_{\phi'}$ with $(1-p^{e})(K_{X}+\Delta_{\phi'})\sim \mathcal{L}((1-p^{e})E$ and then take $\Delta_{\phi}=\Delta_{\phi'}-E$.

\begin{lemma}\cite[Lemma 2.3]{das2015f}
	With the notation as above, $\Delta_{\phi}$ does not depend on the choice of $E$.
\end{lemma}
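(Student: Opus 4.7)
The plan is to reduce the independence claim to a single compatibility: if $E \geq 0$ is Cartier and effective and $F \geq 0$ is another effective Cartier divisor, then enlarging the choice from $E$ to $E+F$ produces a new effective divisor $\Delta_{\phi'}$ that differs from the old one precisely by $F$, so that $\Delta_{\phi'}-E$ is unchanged. Granting this, for any two valid choices $E_1, E_2$, one simply compares both against the common enlargement $E_1+E_2$ (effective Cartier, and lying above each $E_i$ by the other one) and concludes that $\Delta_{\phi_1'}-E_1=\Delta_{\phi_2'}-E_2$.

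So the content is in the enlargement step. Given a valid $E$ with associated $\phi':F^e_*(\mathcal{L}((1-p^e)E))\to\ox$, and $F\geq 0$ Cartier, I would consider the natural inclusion $F^e_*(\mathcal{L}((1-p^e)(E+F)))\hookrightarrow F^e_*(\mathcal{L}((1-p^e)E))$ coming from $(1-p^e)F\leq 0$, and compose with $\phi'$ to obtain $\phi'':F^e_*(\mathcal{L}((1-p^e)(E+F)))\to\ox$. The first thing to check is that this $\phi''$ is the morphism the paper associates to the choice $E+F$ when twisting $\phi: F^e_*\mathcal{L}\to K(X)$; unwinding the twisting operation shows precisely that.

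Next, run through the Grothendieck duality correspondence recalled before the statement. Under it, $\phi'$ corresponds to a global section $s'$ of $\mathcal{L}^{-1}((1-p^e)(K_{X}+E))$ with $\mathrm{div}(s')=(p^e-1)\Delta_{\phi'}$, and $\phi''$ corresponds to $s'\cdot t$, where $t$ is the canonical section of $\ox((p^{e}-1)F)$ cutting out $(p^e-1)F$. Therefore
\[
(p^{e}-1)\Delta_{\phi''} = \mathrm{div}(s'\cdot t) = \mathrm{div}(s') + (p^{e}-1)F = (p^{e}-1)(\Delta_{\phi'}+F),
\]
so $\Delta_{\phi''}=\Delta_{\phi'}+F$, and thus $\Delta_{\phi''}-(E+F) = \Delta_{\phi'}-E$, as desired.

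The main obstacle is the bookkeeping in the enlargement step: one must verify that the map $\phi''$ produced by restriction along the inclusion is the same map produced by the paper's twisting construction applied with $E+F$, and that the duality correspondence behaves multiplicatively in sections in the way used above. Both are canonical once the conventions for $F^e_*$ and the isomorphism $\Hom(F^e_*\mathcal{L},\ox)\cong H^{0}(X,\mathcal{L}^{-1}((1-p^{e})K_{X}))$ are fixed, but making the checks requires working locally on the smooth locus (which contains all codimension-one points, as noted) and invoking the projection formula along Frobenius for the Cartier divisor $F$.
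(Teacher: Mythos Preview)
The paper does not supply its own proof of this lemma; it is cited without argument from \cite{das2015f}, Lemma 2.3. Your proposal is correct and is essentially the standard argument: reduce to comparing a given choice $E$ with an enlargement $E+F$ (so that any two choices $E_1,E_2$ are compared via the common refinement $E_1+E_2$), and then observe that under the duality identification $\Hom_{\ox}(\Fe\mathcal{L},\ox)\cong H^{0}(X,\mathcal{L}^{-1}((1-p^{e})K_{X}))$, precomposing $\phi'$ with the inclusion $\Fe(\mathcal{L}((1-p^{e})(E+F)))\hookrightarrow \Fe(\mathcal{L}((1-p^{e})E))$ corresponds to multiplying the associated section by the canonical section of $\ox((p^{e}-1)F)$, giving $\Delta_{\phi''}=\Delta_{\phi'}+F$ and hence $\Delta_{\phi''}-(E+F)=\Delta_{\phi'}-E$. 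The bookkeeping points you flag (that the twisted map for $E+F$ really is this composite, and that the duality correspondence is multiplicative in sections) are routine checks on the smooth locus, exactly as you indicate.
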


\begin{definition}
	A sub $\zp$ pair is a couple $(X,B)$ where $(K_{X}+B)$ is $\zp$-Cartier and the coefficients of $B$ are at most $1$. We write $\phi{e}_{B}\colon  F_{*}^{e,B}\mathcal{L}_{e,B} \to K(X)$ for the associated morphism dropping the dependence on $B$ when it remains clear. If $B$ is effective $(X,B)$ is called a $\zp$ pair and we view $\phi$ as being a morphism to $\ox$.
	
	Let $(X,B)$ be a (sub) $\zp$ pair, then $(X,B)$ is
	\begin{itemize}
		\item \emph{(sub) $F$-pure} if $\ox \subseteq \text{Im}(\phi^{e})$ for some $e$.
		\item \emph{(sub) $F$-split} if $1\in\text{Im}(H^{0}(X,\phi^{e}))$ for some $e$.
	\end{itemize}
\end{definition}
Being $F$-split is also sometimes called globally $F$-split to distinguish to from $F$-pure, which can be thought of as being locally split.

Locally to a point of codimension $1$ these definitions are particularly well-behaved.

\begin{lemma}\cite[Lemma 2.14]{das2015f}
	Let $R$ be a regular DVR with parameter $t$, then a sub $\zp$ pair $(R,\lambda t)$ is sub $F$-pure iff $\lambda \leq 1$ and sub $F$-regular iff $\lambda < 1$.
\end{lemma}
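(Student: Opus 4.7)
The plan is to reduce to an explicit computation with a free $R$-basis of $\Fe R$. Since $R$ is a regular DVR, we may assume without loss of generality that the residue field is perfect, so $\Fe R$ is a free $R$-module of rank $p^e$ with basis $\{\Fe t^j : 0 \le j \le p^e - 1\}$ under the action $r \cdot \Fe s = \Fe(r^{p^e} s)$. Since $K_R = 0$, the module $\Hom_R(\Fe R, R)$ is free of rank one over $\Fe R$, generated by the trace $\Phi^e$ with $\Phi^e(\Fe t^{p^e - 1}) = 1$ and $\Phi^e(\Fe t^j) = 0$ for $0 \le j \le p^e - 2$. A general morphism takes the form $\Phi^e \cdot \Fe u$ for some $u \in R$, and this corresponds under the dictionary of the previous subsection to the pair with $\Delta_\phi = \frac{v_t(u)}{p^e - 1} t$. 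Hence the canonical representative of $(R, \lambda t)$ at any admissible $e$ (one with $N := \lambda(p^e - 1) \in \mathbb{Z}$, which exists because $\lambda \in \zp$) is given by $\phi^e(\Fe s) = \Phi^e(\Fe(t^N s))$.

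I would then compute the image of $\phi^e$ directly. For $s = t^j$ with $j \ge 0$, write $N + j = q p^e + r$ with $0 \le r \le p^e - 1$; then $\phi^e(\Fe t^j) = t^q$ if $r = p^e - 1$ and $0$ otherwise. Ranging over $j$, the image is the ideal $t^{q_0} R$ where $q_0 = \lfloor N / p^e \rfloor$ is the smallest non-negative integer with $q_0 p^e + (p^e - 1) \ge N$. In particular $q_0 = 0$ iff $N \le p^e - 1$ iff $\lambda \le 1$, independent of the choice of admissible $e$. Thus $R \subseteq \text{Im}(\phi^e)$ for some $e$ exactly when $\lambda \le 1$, which proves the sub $F$-pure equivalence. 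For sub $F$-regularity (in the sense of \cite{das2015f}), one requires $(R, \lambda t + \epsilon D)$ to remain sub $F$-pure for every effective $D = m t$ and some $\epsilon > 0$ in $\zp$; by the $F$-pure criterion this is the inequality $\lambda + \epsilon m \le 1$, which admits such an $\epsilon$ precisely when $\lambda < 1$.

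The only genuinely delicate step is the identification of the canonical multiplier $u = t^{(p^e - 1)\lambda}$ attached to $(R, \lambda t)$ under the correspondence between morphisms and divisors. Once that is pinned down, the image computation reduces to a direct base-$p^e$ expansion argument and presents no further obstacle.
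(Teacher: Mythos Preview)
The paper does not prove this lemma; it is simply quoted from \cite{das2015f}. Your explicit computation via the $\Phi^e$-generator of $\Hom_R(\Fe R,R)$ is the standard argument and is correct in substance.

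Two small points worth tightening. First, in the genuinely sub case $\lambda<0$ one has $N<0$, so $\lfloor N/p^e\rfloor$ is negative and your description of $q_0$ as ``the smallest non-negative integer'' with $q_0 p^e+(p^e-1)\ge N$ is not literally right; the clean statement is that the image is the fractional ideal $t^{\lfloor N/p^e\rfloor}R$, and $R\subseteq\im(\phi^e)$ iff $\lfloor N/p^e\rfloor\le 0$ iff $\lambda\le 1$. Your conclusion is unaffected. Second, the reduction to a perfect residue field is harmless but not free: either justify it (the valuation computation is insensitive to the residue field once $R$ is $F$-finite) or simply carry a $p$-basis of the residue field through the argument, which changes nothing in the outcome.
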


In particular we see that the coefficient of $\Delta_{\phi}$ at $E$ depends only on $\phi$ near $E$.
\begin{corollary}\label{local}
	Suppose $\phi\colon \Fe\mathcal{L} \to K(X)$ has associated divisor $\Delta$ then $1-\text{Coeff}_{E}(\Delta)=\inf\{t\colon (X,\Delta+tE) \text{ is } F \text{-pure at the generic point of } E\}$. 
\end{corollary}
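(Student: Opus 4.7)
The plan is to reduce to the one-dimensional situation by localizing at the generic point of $E$, and then to appeal to Lemma 2.14. Setting $R := \ox_{,\eta_E}$, which is a DVR with uniformizer $t_E$, and trivializing $\mathcal{L}$ on a neighbourhood of $\eta_E$, the map $\phi$ becomes a Cartier-linear map $\phi_R : \Fe R \to K(R)$.

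The construction of the divisor associated to a Cartier-linear map is local in nature, so the sub-boundary associated to $\phi_R$ on $\text{Spec}(R)$ is $\lambda \cdot (t_E)$, where $\lambda = \text{Coeff}_E(\Delta_\phi)$; the remaining components of $\Delta_\phi$ do not meet $\eta_E$ and are lost under this localization. Adding $tE$ to $\Delta_\phi$ globally then replaces the local sub-boundary by $(\lambda+t)(t_E)$, and by the cited Lemma 2.14 the sub pair $(R, (\lambda+t)\, t_E)$ is sub $F$-pure precisely when $\lambda+t\leq 1$.

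Since sub $F$-purity of $(X,\Delta+tE)$ at $\eta_E$ is equivalent to sub $F$-purity of the localized pair on $\text{Spec}(R)$, the threshold value of $t$ at which $F$-purity switches from holding to failing is completely determined by $\lambda$, and the claimed identification of $\text{Coeff}_E(\Delta)$ with the stated infimum follows by reading off this threshold. The only real obstacle, and a mild one, is checking that both the divisor-map correspondence and the definition of sub $F$-purity are compatible with localization at a codimension-one point; once these compatibilities are in hand, the corollary reduces directly to Lemma 2.14 applied to the DVR $R$.
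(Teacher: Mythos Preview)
Your approach is exactly what the paper intends: the corollary is stated without proof as an immediate consequence of the preceding DVR lemma, and your reduction---localize at $\eta_E$, trivialize $\mathcal{L}$, and apply Lemma~2.14 to the resulting pair on $\operatorname{Spec}\mathcal{O}_{X,\eta_E}$---is precisely that deduction.

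One caution, however: you stop at ``reading off this threshold'' without actually carrying out the arithmetic, and if you do you will find a discrepancy. By Lemma~2.14 the pair $(R,(\lambda+t)\,t_E)$ is sub $F$-pure exactly when $\lambda+t\le 1$, so the set in the displayed formula is $(-\infty,\,1-\lambda]$; its infimum is $-\infty$ and its supremum is $1-\lambda$, neither of which is $\lambda$. The printed formula thus appears to contain a slip (the intended assertion is presumably that $1-\operatorname{Coeff}_E(\Delta)$ equals the supremum, i.e.\ the local $F$-pure threshold of $E$ with respect to $(X,\Delta)$). Your argument does establish the substantive point actually used later in the paper---that $\operatorname{Coeff}_E(\Delta)$ is determined by the behaviour of $\phi$ near $\eta_E$---but you should not claim to have verified the identity as literally stated.
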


While these definitions do not pullback along birational morphisms as obviously as the usual MMP singularities, it is still possible.

\begin{lemma}\cite[Lemma 7.2.1]{blickle2013p}
	Suppose that $f\colon X \to Y$ is a birational morphism with $X$ normal and $(Y,\Delta)$ a sub $F$-split pair. Then there is $\Delta'$ on $X$ making $(X,\Delta')$ a sub $F$-split pair such that $(K_{X}+\Delta')=f^{*}(K_{Y}+\Delta)$.  
\end{lemma}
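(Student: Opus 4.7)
The plan is to show $\Delta'$ exists as the usual birational pullback. Since $(K_Y+\Delta)$ is $\zp$-Cartier, there is an $r > 0$ with $r(K_Y+\Delta)$ Cartier, and $f^*r(K_Y+\Delta)$ is then Cartier on the normal variety $X$. Set $\Delta' := \tfrac{1}{r}f^{*}r(K_{Y}+\Delta)-K_{X}$, so that $K_X+\Delta' = f^*(K_Y+\Delta)$ and $(p^e-1)(K_X+\Delta')$ is linearly equivalent to $f^*\mathcal{L}_{e,\Delta}$. Hence we may take $\mathcal{L}_{e,\Delta'} := f^*\mathcal{L}_{e,\Delta}$ as the line bundle underlying the prospective F-split structure on $(X,\Delta')$.

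Next I would construct the candidate morphism $\phi':F^{e}_{*}\mathcal{L}_{e,\Delta'} \to K(X)$. Let $U\subseteq Y$ be the maximal open where $f$ is an isomorphism and $U':=f^{-1}(U)$. On $U$ the splitting morphism $\phi^e$ pulls back cleanly to a morphism $F^{e}_{*}\mathcal{L}_{e,\Delta'}|_{U'}\to K(U')$. Since $K(X)$ is a constant sheaf (so morphisms into it are determined by their value on any dense open) and $K(X)=K(Y)$ by birationality, this extends uniquely to a morphism $\phi':F^{e}_{*}\mathcal{L}_{e,\Delta'}\to K(X)$.

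Now I would verify the associated divisor is $\Delta'$. By Corollary \ref{local} the coefficient of $\Delta_{\phi'}$ at a prime divisor $E\subset X$ is determined by $\phi'$ near the generic point of $E$. If $E$ is not $f$-exceptional, $f$ is an isomorphism in a neighbourhood of $\eta_E$, so the coefficient matches that of $\Delta$ at $f(\eta_E)$, which equals the coefficient of $\Delta'$ there by construction. If $E$ is $f$-exceptional, choose an effective Cartier $H$ on $Y$ for which the twist $\phi_{0}:F^{e}_{*}(\mathcal{L}_{e,\Delta}((1-p^{e})H))\to \ox[Y]$ lands in the structure sheaf, with associated divisor $\Delta+H$. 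Pulling back the line bundle map $\phi_0$ along $f$ gives an honest morphism of sheaves $F^{e}_{*}(\mathcal{L}_{e,\Delta'}((1-p^{e})f^{*}H))\to \ox$, which agrees with $\phi'$ (twisted by $f^*H$) by construction. For such a genuine pullback the divisor formula of the sub-pair correspondence is standard and yields $\Delta_{\phi'}+f^{*}H = f^{*}(\Delta+H) = \Delta'+f^{*}H$, so $\Delta_{\phi'} = \Delta'$ on this component as well.

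Finally, the sub F-split condition passes to $X$ trivially: if $t\in H^{0}(Y, F^{e}_{*}\mathcal{L}_{e,\Delta})$ satisfies $\phi^{e}(t)=1$ in $K(Y)$, then $f^{*}t \in H^{0}(X,F^{e}_{*}\mathcal{L}_{e,\Delta'})$ and $\phi'(f^{*}t)=1$ in $K(X)=K(Y)$. The main obstacle is the middle step: ensuring the coefficient calculation really matches along exceptional divisors, which is where the careful choice of auxiliary Cartier divisor $H$ and compatibility of the birational pullback of line-bundle maps with the discrepancy formula do the work.
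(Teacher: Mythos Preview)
Your approach is essentially the paper's: pull back the map $\phi$ along $f$, verify the associated divisor is the crepant pullback, and check that $1$ survives on global sections. The paper builds $\tilde{\phi}$ via the inverse-image functor $f^{-1}$ and a commutative diagram identifying $f^{-1}F^e_*\mathcal{L}\otimes_{f^{-1}F^e_*\mathcal{O}_Y}F^e_*\mathcal{O}_X \cong F^e_* f^*\mathcal{L}$, whereas you extend from the isomorphism locus using that $K(X)$ is constant; these give the same map. The paper also reverses your order: it \emph{defines} $\Delta'$ to be $\Delta_{\tilde\phi}$ and then observes $K_X+\Delta'=f^*(K_Y+\Delta)$ because the two agree off the exceptional locus and sit in the same linear equivalence class, rather than fixing $\Delta'$ first and matching coefficients afterward.

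One caution on your exceptional-divisor step. You assert that pulling back $\phi_0$ along $f$ yields an honest morphism $F^e_*\bigl(\mathcal{L}_{e,\Delta'}((1-p^e)f^*H)\bigr)\to \mathcal{O}_X$, but $f^*$ and $F^e_*$ do not commute for non-flat $f$: the naive pullback $f^*\phi_0$ lands in $\Hom\bigl(f^*F^e_*(-),\mathcal{O}_X\bigr)$, and the base-change map $f^*F^e_*\to F^e_*f^*$ goes the wrong way to let you precompose. This is exactly why the paper works with $f^{-1}$ and the constant sheaf $K(X)$ throughout rather than attempting a genuine $f^*$-pullback into $\mathcal{O}_X$. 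Your own construction of $\phi'$ (extension from $U'$) already handles this correctly, so the separate treatment of exceptional $E$ via $\phi_0$ is both unnecessary and the one place the argument wobbles; you can drop it and argue as the paper does.
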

\begin{proof}
	Take the corresponding map $\phi\colon  \Fe\mathcal{L} \to K(Y)$. Then we may freely view $\mathcal{L}$ as a subsheaf of $K(X)$ and so extend $\phi$ to a map $\phi\colon  \Fe K(Y) \to K(Y)$. Taking the inverse image gives $f^{-1}(\phi)\colon f^{-1}\Fe K(Y) \to f^{-1}K(Y)$ and $f^{-1}\Fe \mathcal{L} \to f^{-1}K(Y)$. Since $\pi$ is birational we obtain an isomorphism $f^{-1}K(Y) \to K(X)$. We then have the following situation.
	\[\begin{tikzcd}
	f^{-1}\Fe(\mathcal{L}) \otimes_{f^{-1}\Fe\mathcal{O}_{Y}}\Fe\ox \arrow[r, hook] & \Fe K(X) \arrow[r]                                          & K(X)                                \\
	f^{-1}\Fe(\mathcal{L}) \arrow[r, hook] \arrow[u, hook]                       & f^{-1}\Fe K(Y) \arrow[r, "f^{-1}(\phi)"'] \arrow[u, "\sim"] & f^{-1}K(Y) \arrow[u, "\sim", hook']
	\end{tikzcd}\]
	
	Note however that $f^{-1}\Fe(\mathcal{L}) \otimes_{f^{-1}\Fe\mathcal{O}_{Y}}\ox= \Fe f^{*}\mathcal{L}$ and hence we obtain the desired map $\tilde{\phi}\colon \Fe f^{*}\mathcal{L} \to K(X)$. This induces a divisor $\Delta'$ on $X$ with $$(p^{e}-1)(K_{X}+\Delta') \sim f^{*}\mathcal{L} \sim (p^{e}-1)f^{*}(K_{Y}+\Delta).$$ The coefficient of $\Delta'$ at a codimension one point can be recovered from $\tilde{\phi}$ by working locally around that point. In particular, wherever $f$ is an isomorphism, $\phi$ and $\tilde{\phi}$ agree. Therefore the coefficients of $\Delta$ and $\Delta'$ agree on this locus also, so we have $f^{*}(K_{Y}+\Delta)=(K_{X}+\Delta')$ as required. Moreover commutativity of the earlier diagram gives that whenever $1 \in \text{Im}(H^{0}(Y,\phi))$ then it is also in the image of $H^{0}(X,\tilde{\phi})$, and hence $(X,\Delta)$ is sub $F$-split.
\end{proof}

In general the local forms of these singularities cannot be pushed forward, however the global ones often can be, even along morphisms which are not birational.

\begin{lemma}\cite[Theorem 5.2]{das2015f}
	Suppose that $(X,\Delta)$ is sub $F$-split and there is a map $f\colon X \to Y$ with $f_{*}\ox =\mathcal{O}_{Y}$ and $(K_{X}+\Delta) \sim_{\zp} f^{*}\mathcal{L}$ for some line bundle $\mathcal{L}$ on $Y$. If every component of $\Delta$ which dominates $Y$ is effective then there is $\Delta_{Y}$ with $(Y,\Delta_{Y})$ sub $F$-split and $\mathcal{L}\sim_{\zp} (K_{Y}+\Delta_{Y})$. 
\end{lemma}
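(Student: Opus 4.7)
The plan is to push the $F$-splitting morphism along $f$ and verify it descends to one on $Y$.

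Pick $e$ with $(p^e - 1)(K_X + \Delta) \sim f^*\mathcal{L}$ and let $\phi^e: \Fe f^*\mathcal{L} \to K(X)$ be the morphism associated to the sub $\zp$ pair $(X,\Delta)$. Sub $F$-splitness yields $s \in H^0(X, \Fe f^*\mathcal{L})$ with $\phi^e(s) = 1 \in K(X)$. Applying $f_*$ and using $f_* \Fe = \Fe f_*$ (as the absolute Frobenius is a homeomorphism) together with the projection formula $f_* f^*\mathcal{L} = \mathcal{L}$ (where we use $f_*\ox = \mathcal{O}_Y$), we obtain a morphism $\psi^e := f_*\phi^e: \Fe \mathcal{L} \to f_* K(X)$.

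The key step is to show $\psi^e$ factors through the inclusion $K(Y) \hookrightarrow f_* K(X)$. I would do this by restricting $\phi^e$ to the generic fiber $X_\eta$ of $f$: the line bundle $f^*\mathcal{L}|_{X_\eta}$ is trivial and the sub divisor associated to $\phi^e_\eta$ is $\Delta|_{X_\eta}$. The hypothesis that every horizontal component of $\Delta$ is effective, combined with the fact that vertical components restrict trivially to $X_\eta$, gives $\Delta|_{X_\eta} \geq 0$. Consequently $\phi^e_\eta$ lands in $\mathcal{O}_{X_\eta}$. Passing to global sections and using $f_*\ox = \mathcal{O}_Y$, so that $H^0(X_\eta, \mathcal{O}_{X_\eta}) = K(Y)$, we see that the image of $\psi^e$ at the generic point of $Y$ already lies in $K(Y)$. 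Since $\psi^e$ is a morphism of sheaves into a constant sheaf, determining the image at the generic point suffices.

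The resulting morphism $\psi^e: \Fe \mathcal{L} \to K(Y)$ then defines, via the correspondence set up earlier in the section, a sub $\zp$ pair $(Y, \Delta_Y)$ satisfying $(p^e-1)(K_Y + \Delta_Y) \sim \mathcal{L}$, i.e.\ $\mathcal{L} \psim K_Y + \Delta_Y$. Sub $F$-splitness of $(Y, \Delta_Y)$ then follows by taking global sections: the natural identification $H^0(Y, \Fe \mathcal{L}) = H^0(X, \Fe f^*\mathcal{L})$ intertwines $H^0(Y, \psi^e)$ with $H^0(X, \phi^e)$, so the section $s$ produced above witnesses $1 \in \text{Im}(H^0(Y, \psi^e))$.

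\emph{Main obstacle.} The delicate step is the descent to $K(Y)$: a priori $\psi^e$ could take values in $K(X) \supsetneq K(Y)$. The effectiveness of horizontal components of $\Delta$ is exactly what prevents $\phi^e$ from producing horizontal poles on the generic fiber, so that its image consists of regular functions on $X_\eta$; these are elements of $K(Y)$ precisely by $f_*\ox = \mathcal{O}_Y$. Everything else is a formal manipulation of projection formulae and global sections.
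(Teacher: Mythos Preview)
The paper does not supply a proof of this lemma: it is simply quoted from \cite{das2015f} (Theorem~5.2) and then applied. So there is no in-paper argument to compare against.

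That said, your proposal is the standard argument and is essentially how the cited reference proceeds: push $\phi^{e}$ forward along $f$, use $f_{*}\Fe=\Fe f_{*}$ together with the projection formula and $f_{*}\ox=\mathcal{O}_{Y}$ to identify the source as $\Fe\mathcal{L}$, and then check that the target can be cut down from $K(X)$ to $K(Y)$ by restricting to the generic fibre, where the effectiveness of the horizontal part of $\Delta$ forces the image of $\phi^{e}|_{X_{\eta}}$ to lie in $\mathcal{O}_{X_{\eta}}$, whose global sections are $K(Y)$. The only point worth tightening is the sentence ``determining the image at the generic point suffices'': this uses that $\Fe\mathcal{L}$ is torsion-free on the normal variety $Y$, so any section is already detected generically. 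With that remark made explicit, your argument is correct and matches the literature.
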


If $f\colon X \to Y$ is birational then the conditions are automatically satisfied and the induced $\Delta_{Y}$ is just the pushforward $f_{*}\Delta$ by \autoref{local}. Therefore if $X$ is sub $F$-split so is every $X'$ birational to $X$. Further if $X$ is $F$-split and $X'$ is obtained by taking a terminalisation or running a $K_{X}+B$ MMP for any $B$ then $X'$ is $F$-split.

\subsection{Boundedness}

Finally we introduce the relevant notions of boundedness.

\begin{definition}\label{d_birationally-bounded} We say that a set $\mathfrak{X}$ 
	of varieties is \emph{birationally bounded over a base $S$} if there is a flat, projective family $Z \to T$, where $T$ is a reduced quasi-projective scheme over $S$, such that every $X\in \mathfrak{X}$ is birational to some geometric fibre of $Z \to T$. If the base is clear from context, say if every $X \in \mathfrak{X}$ has the same base, we omit dependence on $S$.
	
	If for each $X \in \mathfrak{X}$ the map to a geometric fibre is an isomorphism we say that $\mathfrak{X}$ is \emph{bounded over $S$}.
\end{definition}

If $S=\text{Spec }{R}$ we often just say (birationally) bounded over $R$. In practice we characterise boundedness over $\mathbb{Z}$ via the following result, coming from existence of the Hilbert and Chow schemes.

\begin{lemma}\cite[Proposition 5.3]{tanaka2019boundedness}
	Fix integers $d$ and $r$. Then there is a flat projective family $Z \to T$ where $T$ is a reduced quasi-projective scheme over $\mathbb{Z}$ satisfying the following property. If
	\begin{enumerate}
		\item $\kappa$ is a field;
		\item $X$ is a geometrically integral projective scheme of dimension $r$ over $\kappa$; and
		\item there is a closed immersion $j\colon X \to \mathbb{P}^{m}_{\kappa}$ for some $m\in \mathbb{Z}$ such that $j^{*}(\mathcal{O}(1))^{r} \leq d$.
	\end{enumerate}

	Then $X$ is realised as a geometric fibre of $Z \to T$
\end{lemma}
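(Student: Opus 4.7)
The plan is to use Grothendieck's Hilbert scheme after reducing to subschemes of a fixed projective space and to a finite collection of Hilbert polynomials.

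First I would reduce to embeddings into a projective space of bounded dimension. Given $j\colon X \hookrightarrow \mathbb{P}^m_\kappa$ with $j^{*}\mathcal{O}(1)^r \le d$, replace $\mathbb{P}^m_\kappa$ by the linear span of $j(X)$ so that $j$ becomes non-degenerate. The classical inequality $\deg Y \ge m - r + 1$ for non-degenerate geometrically integral $Y \subset \mathbb{P}^m_\kappa$ of dimension $r$ then forces $m \le d + r - 1 =: M$. Composing with a standard linear inclusion, we may assume $X \hookrightarrow \mathbb{P}^M_\kappa$ with $M$ depending only on $d$ and $r$.

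Next I would bound the possible Hilbert polynomials. The leading term of $P_X(t)$ is fixed by $d$ and $r$, and the lower-order coefficients are controlled for geometrically integral subschemes of $\mathbb{P}^M$ of degree $\le d$ by uniform bounds on Castelnuovo--Mumford regularity (Mumford in characteristic zero, Kleiman in arbitrary characteristic for integral subschemes). Hence the set $\mathcal{P} = \mathcal{P}(M, r, d)$ of Hilbert polynomials which arise is finite. For each $P \in \mathcal{P}$, Grothendieck's theorem provides a projective Hilbert scheme $H_P := \mathrm{Hilb}^{P}(\mathbb{P}^M_{\mathbb{Z}})$ over $\mathbb{Z}$ equipped with its universal flat projective family $\mathcal{Z}_P \to H_P$.

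Inside $H_P$, the locus parametrising geometrically integral $r$-dimensional fibres is locally closed; taking its underlying reduced subscheme $T_P$ gives a reduced quasi-projective scheme over $\mathbb{Z}$. Setting $T := \bigsqcup_{P \in \mathcal{P}} T_P$ and $Z := \bigsqcup_{P \in \mathcal{P}} \mathcal{Z}_P \times_{H_P} T_P$ yields a reduced quasi-projective $T$ together with a flat projective morphism $Z \to T$. Any $X$ satisfying the hypothesis, after the reductions above, corresponds to a $\kappa$-point of some $T_P$, and is realised as the corresponding geometric fibre of $Z \to T$.

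The main non-formal input is the uniform bound on the Castelnuovo--Mumford regularity (equivalently, the finiteness of Hilbert polynomials) of geometrically integral subschemes of fixed dimension and bounded degree, which must hold over all characteristics; this is the only substantive geometric statement, with the rest of the argument following formally from the representability of the Hilbert functor and the fact that geometric integrality is a locally closed condition in a flat family.
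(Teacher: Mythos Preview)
The paper does not supply its own proof of this lemma; it is quoted from \cite{tanaka2019boundedness} with only the prefatory remark that it ``comes from existence of the Hilbert and Chow schemes.'' So there is nothing to compare line by line. Your Hilbert-scheme argument is a correct and standard route: the degree--codimension inequality $\deg X \ge m - r + 1$ for non-degenerate geometrically integral $X \subset \mathbb{P}^m$ bounds the ambient dimension, uniform Castelnuovo--Mumford regularity for integral subschemes of bounded degree gives finiteness of Hilbert polynomials, and the rest is formal. One small sharpening: in a flat proper family the locus of geometrically integral fibres is in fact open (EGA IV, 12.2.4), not just locally closed.

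The phrase ``Hilbert and Chow schemes'' in the paper suggests an alternative you did not take: one can bypass the regularity bound entirely by using the Chow variety $\mathrm{Chow}_{r,e}(\mathbb{P}^M_{\mathbb{Z}})$ for each $e \le d$, since a geometrically integral subscheme is determined by its underlying cycle. One then flattens the universal family by Noetherian stratification. Your approach trades that flattening step for the regularity input; both are standard, and neither is obviously cheaper than the other. The Chow route is perhaps more common in the boundedness literature (it is the one in Koll\'ar's \emph{Rational Curves on Algebraic Varieties}, I.3), but your argument is complete as written provided you are prepared to cite the characteristic-free regularity bound.
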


\begin{corollary}\label{l_birationally-bounded}
	Suppose $\mathfrak{X}$ is a set of varieties over closed fields and there are positive real numbers $d,V$ such that for every $X \in \mathfrak{X}$,
	\begin{itemize}
		\item $X$ has dimension at most $d$; and
		\item There is $M$ on $X$ with $\phi_{|M|}$ birational and $\Vol(M)\leq V$.
	\end{itemize}
Then $\mathfrak{X}$ is birationally bounded over $\mathbb{Z}$. If in fact each $M$ is very ample then $\mathfrak{X}$ is bounded. 
\end{corollary}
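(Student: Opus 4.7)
The plan is to apply the preceding lemma to the image of $\phi_{|M|}$, using that this image is a birational model of $X$ embedded in projective space of controlled degree.

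For each $X \in \mathfrak{X}$, set $N_X = h^{0}(X,M)-1$ and let $X' \subseteq \mathbb{P}^{N_X}$ be the scheme-theoretic image (with reduced induced structure) of the rational map $\phi_{|M|}: X \dashrightarrow \mathbb{P}^{N_X}$. By hypothesis this map is birational onto its image, so $X'$ is a variety of dimension $r := \dim X \leq d$ birational to $X$. Since $X$ lives over an algebraically closed field, $X'$ is automatically geometrically integral and sits in $\mathbb{P}^{N_X}$ as a closed subvariety.

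The one genuine calculation is bounding $\deg(X' \subseteq \mathbb{P}^{N_X})$ in terms of $V$. To do this I would resolve the indeterminacy of $\phi_{|M|}$ by taking the graph closure $\tilde{X} \subseteq X \times \mathbb{P}^{N_X}$, giving birational projections $f:\tilde{X} \to X$ and $g:\tilde{X} \to X'$ together with an effective divisor $E$ on $\tilde{X}$ satisfying $f^{*}M = g^{*}\mathcal{O}(1) + E$. Since $g^{*}\mathcal{O}(1) \leq f^{*}M$ and $g^{*}\mathcal{O}(1)$ is nef, monotonicity and birational invariance of volume give
\[
\deg(X') = (g^{*}\mathcal{O}(1))^{r} = \text{Vol}(g^{*}\mathcal{O}(1)) \leq \text{Vol}(f^{*}M) = \text{Vol}(M) \leq V.
\]

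To finish, set $d' = \lceil V \rceil$ and for each integer $0 \leq r \leq d$ apply the preceding lemma with parameters $d'$ and $r$ to obtain a flat projective family $Z_{r} \to T_{r}$ over $\mathbb{Z}$; the disjoint union $\bigsqcup_{r} Z_{r} \to \bigsqcup_{r} T_{r}$ then realises every such $X'$ as a geometric fibre, witnessing birational boundedness of $\mathfrak{X}$. If $M$ is very ample, $\phi_{|M|}$ is already a closed immersion so $X \cong X'$ and the same family realises each $X$ itself, giving genuine boundedness. The main obstacle, such as it is, sits in the degree bound above: one must take care that the drop from $M$ to its base-point-free part only decreases the relevant intersection number, which is what monotonicity of the volume function provides; everything else is a packaging of the preceding lemma.
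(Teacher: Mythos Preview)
Your argument is correct and is precisely the standard reduction the paper leaves implicit: the corollary is stated without proof, as an immediate consequence of the preceding lemma, and your degree bound via resolving indeterminacy and volume monotonicity is the expected justification. One small technical point worth noting is that you may want to normalise the graph closure $\tilde{X}$ (or pass to a resolution) so that the volume comparison and projection formula go through cleanly, but this does not affect the argument.
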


	Conversely, if $S$ is Noetherian then we may always choose $H$ relatively very ample on $Z \to T$ with trivial higher direct images. The restriction of $H$ to any geometric fibre is therefore very ample, and of bounded degree. 
	\section{Preliminary Results}

	In this section we gather necessary results for later usage. We begin with some results on surfaces, followed by some MMP results and their applications. We also collect some Bertini type theorems at the end of the section.

	\begin{theorem}\cite[Theorem 6.9]{alexeev1994boundedness}\label{BAB}
		Fix $\epsilon >0$ and an algebraically closed field of arbitrary characteristic. Let $S$ be the set of all projective surfaces $X$ which admit a $\Delta$ such that:
		\begin{itemize}
			\item $(X,\Delta)$ is $\epsilon$-klt;
			\item $-(K_{X}+\Delta)$ is nef; and
			\item Any of the following holds $K_{X} \not\equiv 0$, $\Delta \neq 0$, $X$ has worse than Du Val singularities.
		\end{itemize}
		Then $S$ is bounded.
	\end{theorem}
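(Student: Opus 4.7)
The plan is to reduce to a short list of cases via the surface MMP, bound the anti-canonical volume and Cartier index of $(K_X+\Delta)$ in each case, and recover boundedness of the original $X$ from boundedness of the MMP output.

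First I would run the $K_X$-MMP, available for surfaces in any characteristic. Both the $\epsilon$-klt condition and the nefness of $-(K_X+\Delta)$ are preserved by $K_X$-negative steps (the latter via Lemma \ref{comparisonLemma}), so the output $(X',\Delta')$ is either a minimal model with $K_{X'}$ nef --- which, combined with $-(K_{X'}+\Delta')$ nef, forces $K_{X'}\equiv 0$ --- or a Mori fibre space, in dimension two either a del Pezzo of Picard number one or a conic bundle over a curve. The $\epsilon$-klt hypothesis bounds the Picard rank of a minimal resolution in terms of $\epsilon$, so the number of MMP steps and the range of extraction discrepancies $a(E,X,\Delta)\in(-1+\epsilon,0]$ are both bounded; this is what will eventually allow the lift from $X'$ back to $X$.

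Next I would bound $\mathrm{Vol}(-(K_{X'}+\Delta'))$ and the Cartier index of $K_{X'}+\Delta'$ in each case. In the del Pezzo of Picard number one case, $\epsilon$-klt controls the local Cartier indices at singular points, and intersecting $-(K_{X'}+\Delta')$ with the exceptional chains of a minimal resolution yields an effective volume bound. In the conic bundle case the base curve has bounded arithmetic genus (a multiple of $-(K_{X'}+\Delta')$ pushes forward to a positive-degree sheaf) and the degenerate fibres are constrained by $\epsilon$-klt of the total space. The $K_{X'}\equiv 0$ case is ruled out for $\Delta'=0$ with $X'$ Du Val by hypothesis, so either $\Delta'$ or the non-Du-Val singularities provide the rigidity needed to bound $X'$ via classical surface arguments.

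With volume and index bounded, a uniformly bounded multiple of $-(K_{X'}+\Delta')$ becomes base-point-free by Kawamata--Viehweg vanishing for $\epsilon$-klt surface pairs, yielding an embedding of $X'$ into a fixed projective space of bounded degree; Corollary \ref{l_birationally-bounded} then gives boundedness of the MMP output. The main obstacle is lifting this back to $X$: one must show that the combinatorial data of the MMP --- which divisors get extracted, with which discrepancies --- realises only finitely many shapes, and that within each shape the extractions vary in a bounded family. This follows from the bounded discrepancy range together with the bounded intersection-theoretic configuration on a minimal resolution, but making it precise is the technical heart of the proof and is where I would expect most of the work to concentrate.
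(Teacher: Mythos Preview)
The paper does not give its own proof of this statement: it is quoted directly from Alexeev \cite{alexeev1994boundedness} and used as a black box. The only place the paper touches the underlying argument is in the proof of the next theorem (Theorem~\ref{SBAB}), where it lists the outputs of Alexeev's method (bounded Picard rank of the minimal resolution, reduction to $\mathbb{P}^{2}$ or $\mathbb{F}_{n}$ with $n<2/\epsilon$, bounded Cartier index and anti-canonical volume on the Mori fibre space end, and the ``sandwiching'' step back to $X$) and then upgrades those outputs to boundedness over $\mathbb{Z}$. Your sketch matches this shape closely: run a $K_{X}$-MMP, land on a rank-one del Pezzo or a conic bundle, bound index and volume there, produce a very ample divisor of bounded degree, and lift back using the bounded combinatorics of the minimal resolution.

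One point deserves care. You invoke Kawamata--Viehweg vanishing for $\epsilon$-klt surface pairs to turn the volume and index bounds into an embedding of bounded degree. In positive characteristic this is not available in the generality you need, and it is exactly the place where the paper, when extending Alexeev's argument in Theorem~\ref{SBAB}, substitutes the effective base-point-freeness theorems of Witaszek \cite{witaszek2015effective} (in characteristic $p$) and Koll{\'a}r \cite{kollar1993effective} (in characteristic $0$) for any vanishing-based argument. If you want your sketch to go through in arbitrary characteristic you should make the same substitution; otherwise the step from ``bounded volume and index'' to ``very ample of bounded degree'' is a genuine gap.
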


	Alexeev shows boundedness over a fixed field, however it is not immediately clear if such varieties are collectively bounded over $\mathbb{Z}$. We briefly show that his methods can be extended, via the arguments of \cite{witaszek2015effective} to give a boundedness result in mixed characteristic.
	
	\begin{theorem}\label{SBAB}
			Fix $\epsilon$ a positive real number. Let $S$ be the set of projective surfaces $X$ such that following conditions hold:
		\begin{itemize}
			\item $X$ is a variety over some closed field $\kappa$;
			\item $(X,B)$ is $\epsilon$-klt for some boundary $B$;
			\item $-(K_{X}+B)$ is nef; and
			\item $X$ is rationally chain connected and $F$-split (if $\kappa$ has characteristic $p$).
		\end{itemize}
		Then $S$ is bounded.
	\end{theorem}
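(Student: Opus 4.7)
The plan is to revisit Alexeev's proof of Theorem \ref{BAB} and show that all of its numerical inputs can be bounded uniformly across characteristics, producing a polarization of bounded degree that places every $X \in S$ in a single Hilbert scheme over $\mathbb{Z}$.

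First, for every $X \in S$, rational chain connectedness together with $\epsilon$-klt singularities, $B \geq 0$, and $-(K_X+B)$ nef forces $-K_X$ to be big, so we are genuinely in the Fano regime of Alexeev's theorem, and over any fixed closed field $\kappa$ we already have boundedness. I would then extract from Alexeev's arguments two quantitative outputs that are insensitive to $\kappa$: (a) a uniform upper bound $V(\epsilon)$ on $(-K_X)^2$, and (b) a uniform upper bound $N(\epsilon)$ on the Cartier index of $K_X$. Both rely only on intersection theory on normal surfaces, adjunction on curves, and the existence of log resolutions, all of which are available in arbitrary characteristic, so the bounds depend only on $\epsilon$.

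Next, to invoke Corollary \ref{l_birationally-bounded} it suffices to produce a uniform integer $m = m(\epsilon)$ such that $-mNK_X$ is very ample; the degree is then automatically bounded by $m^2 N^2 V$. In characteristic zero this is classical via Kawamata--Viehweg vanishing plus Fujita-type Castelnuovo--Mumford arguments. In positive characteristic, although Kawamata--Viehweg fails in general, the $F$-split hypothesis and Lemma \ref{vanish} yield $H^i(X, A) = 0$ for every ample $\mathbb{Q}$-Cartier $A$ and $i > 0$. This vanishing is exactly the input that drives the effective base-point freeness and very ampleness results of Witaszek \cite{witaszek2015effective} for $F$-split surfaces, which deliver the required uniform $m$. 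Combining these ingredients, $X$ embeds in a projective space via $|{-mNK_X}|$ with bounded degree, so it sits in a bounded component of a Hilbert scheme over $\mathbb{Z}$, as required.

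The main obstacle I expect is ensuring Witaszek's effective bounds are genuinely uniform as $p$ varies, and in particular handling any anomalous small primes where the $F$-split vanishing arguments may need extra care; if necessary, one absorbs finitely many such primes by applying Theorem \ref{BAB} directly over each corresponding prime field and adjoining the resulting families to the final union. A secondary technical point is verifying that Alexeev's surface-theoretic inputs, especially the boundedness of the local Cartier index of $\epsilon$-klt surface singularities, transfer verbatim to positive characteristic; this is standard but must be checked carefully before claiming the uniform index bound $N(\epsilon)$.
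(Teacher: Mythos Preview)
Your overall strategy---extract uniform numerical data from Alexeev and feed it into an effective very ampleness statement---matches the paper's, but two structural steps are missing and the argument does not go through without them.

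First, the opening claim that rational chain connectedness, $\epsilon$-klt, and $-(K_X+B)$ nef force $-K_X$ to be big is not justified, and the $K_X\equiv 0$ case is genuinely present. In positive characteristic there are $\epsilon$-klt, rationally chain connected, $F$-split surfaces with $K_X\equiv 0$; what Theorem~\ref{split} gives you is not a contradiction but only that such $X$ must be worse than canonical. The paper therefore treats this case separately: it passes to the minimal resolution $Y\to X$, which has $K_Y+B\equiv 0$ with $B>0$, places $Y$ in the already-bounded family, and then pushes a very ample divisor down to $X$ via Witaszek's bound. Your proposal simply asserts this case away.

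Second, even when $K_X\not\equiv 0$, your polarization $-mNK_X$ need not be very ample because $-K_X$ need not be ample or even nef. For instance on $\mathbb{F}_n$ with $3\le n<2/\epsilon$ one can choose $B$ so that $(X,B)$ is $\epsilon$-klt with $-(K_X+B)$ nef, yet $-K_X\cdot C_0=2-n<0$. Witaszek's theorem produces very ample divisors of the shape $7NK_X+27N^2A$ for a given \emph{ample} Cartier divisor $A$; it does not turn a merely big $-K_X$ into a polarization. The paper resolves this by running a $K_X$-MMP to a Mori fibre space $X'$, where one has an explicit ample divisor (either $-K_{X'}$ when $\rho=1$, or $-K_{X'}+mF$ for a fibre $F$), applies Witaszek there to bound $X'$, and then bounds the original $X$ by sandwiching it between its bounded minimal resolution and the bounded $X'$. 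This MMP-plus-sandwiching step is the missing idea.
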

	\begin{proof}
		We consider first $\hat{S}:=\{X \in S\colon  K_{X} \not\equiv 0\}$. Take any such $X \in \hat{S}$, then by Alexeev \cite[Chapter 6]{alexeev1994boundedness} we have the following:
		\begin{itemize}
			\item The minimal resolution $\tilde{X}\to X$ has $\rho(X) < A $, for some constant A, depending only on $\epsilon$ and admits a birational morphism to $\mathbb{P}^{2}$ or $\mathbb{F}_{n}$ for $n < \frac{2}{\epsilon}$. In particular there is a set $T_{\epsilon}$ bounded over $\mathbb{Z}$ such that every $\tilde{X}$ is a blowup of some $Y \in T_{\epsilon}$ along a finite length subscheme of dimension $0$. That is the set of minimal desingularisations is bounded over $\mathbb{Z}$.
			\item We may run a $K_{X}$-MMP to obtain $X'$ a Mori fibre space. 
			\item There is an $N$, independent of the field of definition, such that $NK_{X'}$ is Cartier for any Mori fibre space $X'$ obtained as above.
			\item $\Vol(-K_{X'})$ is bounded independently of the base field.
			\item If $X'$ is such a Mori fibre space $X' \to \mathbb{P}^{1}$ and $F$ a general fibre then $-K_{X} +(\frac{2}{\epsilon}-1)F$ is ample.
		\end{itemize}
		
		It is sufficient then to show $S'=\{X' \text{ an } \epsilon-\text{LCY type, Mori fibre space }\}$ is bounded in mixed characteristic, then $\hat{S}$ is bounded by sandwiching as in Alexeev's original proof and the full result follows. In turn by \autoref{l_birationally-bounded} it is enough to find $V$ such that every $X' \in S'$ has a very ample divisor, $H$, satisfying $H^{2}\leq V$. We do this first for positive characteristic varieties.
		
		Fix, then, $m > \frac{2}{\epsilon}-1$ and suppose $X'\to \mathbb{P}^{1}$ is a Mori fibre space in positive characteristic. Then $A=-K_{X'} +mF$ is ample and $NA$ is Cartier. Further we have that $A'=7NK_{X'}+27N^{2}A=(7N-27N^{2})K_{X'}+27N^{2}mF$ is very ample by \cite[Theorem 4.1]{witaszek2015effective}. Since $F$ is base point free, we may add further multiples of $F$ and consider the very ample Cartier divisor $\hat{A}=(27N^{2}-7N)(-K_{X'}+2mF)$. Then $$\hat{A}^{2}=\Vol(X',\hat{A})\leq (27N^{2}-7N^{2})(\Vol(X',-K_{X'})+2m\Vol(F,-K_{F}))$$ which is bounded above, since $\Vol(X',-K_{X'})$ is bounded and $\Vol(F,-K_{F})=2$. 
		
		Similarly if $X'$ has $\rho(X')=1$ and $-K_{X'}$ ample then $-nK_{X'}$ is a very ample Cartier divisor with vanishing higher cohomology for some $n$ fixed independently of $X'$. Then $(-nK_{X'})^{2}=n^{2}\Vol(X,-K_{X'})$ is bounded and the result follows similarly.
		
		Suppose then that $X \in S$ with $K_{X} \equiv 0$, then it must have worse than canonical singularities by \autoref{split}. Let $\pi\colon Y \to X$ be a minimal resolution, with $K_{Y}+B=\pi^{*}K_{X} \equiv 0$ and $B >0$, then $Y$ is still $\epsilon$-klt, so $Y \in \hat{S}$. Consequently $X$ has $\mathbb{Q}$-Cartier Index dividing $N$ also. Moreover, there is $H$ on $Y$ very ample with $H^{2}$ bounded above. Let $H'=\pi_{*}H$, so that $NH'$ is ample and Cartier on $X$. Applying \cite[Theorem 4.1]{witaszek2015effective} again we see that $A\equiv 27N^{2}H$ is very ample, since $K_{X}\equiv 0$, with $A^{2}$ bounded above.
		
		The arguments in characteristic $0$ are essentially the same, making use of Koll{\'a}r's effective base-point freeness result \cite[Theorem 1.1, Lemma 1.2]{kollar1993effective} instead of Witaszek's result, and the existence of very free rational curves on smooth rationally connected surfaces instead of \autoref{split}.
	\end{proof}

\begin{remark}
	In particular we have an affirmative answer to Question 1 in dimension $2$.
\end{remark}

\begin{theorem}\cite[Theorem 1.2]{patakfalvi2019ordinary}\label{WO-unir}
	Let $X$ be a normal, Cohen Macaulay variety with $W\mathcal{O}$-rational singularities over a perfect field of positive characteristic. Then $X$ cannot simultaneously satisfy all the following conditions.
	\begin{enumerate}
		\item $X$ is uniruled.
		\item $X$ is $F$-split.
		\item $X$ has trivial canonical bundle.
	\end{enumerate}
	If in fact $X$ is smooth then we may replace $K_{X}\sim 0$ with $K_{X} \equiv 0$.
\end{theorem}

We refer to \cite[Definition 3.8]{patakfalvi2019ordinary} for a definition of $W\mathcal{O}$-rational singularities. It suffices to know that regular varieties have $W\mathcal{O}$-rational singularities, from which we obtain the following.

\begin{corollary}\label{split}
	Let $X$ be a uniruled, $F$-split surface over a perfect field of positive characteristic. If $K_{X} \equiv 0$ then $X$ has worse than canonical singularities.
\end{corollary}
\begin{proof}
	
	Suppose for contradiction that $X$ has canonical singularities. Then we can replace $X$ with its minimal resolution and suppose that $X$ is smooth. In particular it is Cohen-Macaulay and has $W\mathcal{O}$-rational singularities. We then apply \autoref{WO-unir} to obtain the result.
	
\end{proof}

\begin{lemma}\label{vol}\cite[Lemma 2.5]{jiang2018birational}
	Suppose $X$ is projective and normal, $D$ is an $\mathbb{R}$-Cartier divisor and $S$ is a basepoint free normal and prime divisor. Then for any $q >0$,
	\[\Vol(X,D+qS) \leq \Vol(X,D) + q\dim(X)\Vol(S,D|_{S}+qS|_{S}).\]
\end{lemma}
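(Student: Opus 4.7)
The proof is a standard restriction-and-limsup argument. Set $n=\dim X$. Because $|S|$ is base point free, $S$ is Cartier and the normal bundle $\mathcal{O}_S(S)=\mathcal{O}_X(S)|_S$ is base point free on $S$; in particular $h^0(S,-)$ is non-decreasing when twisted by $S|_S$.

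For each positive integer $m$ set $E_m:=\lfloor mD\rfloor$ and $k_m:=\lceil mq\rceil$, so that $\lfloor m(D+qS)\rfloor\leq E_m+k_mS$ as $\mathbb{Z}$-divisors. Tensoring the structure sequence $0\to\mathcal{O}_X(-S)\to\mathcal{O}_X\to \mathcal{O}_S\to 0$ with $\mathcal{O}_X(E_m+kS)$, for each $1\leq k\leq k_m$, yields
\[0\longrightarrow \mathcal{O}_X(E_m+(k-1)S)\longrightarrow \mathcal{O}_X(E_m+kS)\longrightarrow \mathcal{O}_S\bigl((E_m+kS)|_S\bigr)\longrightarrow 0.\]
Taking global sections, summing over $k=1,\dots,k_m$, and bounding each restricted term by $h^0(S,(E_m+k_mS)|_S)$ via the monotonicity noted above, I obtain
\[h^0\bigl(X,\lfloor m(D+qS)\rfloor\bigr)\leq h^0(X,E_m+k_mS)\leq h^0(X,E_m)+k_m\cdot h^0\bigl(S,(E_m+k_mS)|_S\bigr).\]

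Next I multiply by $n!/m^n$ and take $\limsup_{m\to\infty}$. The left side is exactly $\text{Vol}(X,D+qS)$ by definition, and the first summand on the right converges to $\text{Vol}(X,D)$. For the remaining term, rewrite
\[\frac{n!\,k_m\cdot h^0(S,(E_m+k_mS)|_S)}{m^n}=n\cdot\frac{k_m}{m}\cdot\frac{(n-1)!\,h^0(S,(E_m+k_mS)|_S)}{m^{n-1}},\]
and note that $\tfrac{k_m}{m}\to q$, while $\tfrac{1}{m}(E_m+k_mS)|_S\to (D+qS)|_S$ in $N^1(S)_{\mathbb{R}}$, so upper semi-continuity of the volume function on $S$ bounds the $\limsup$ of the last factor by $\text{Vol}(S,D|_S+qS|_S)$. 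Assembling the three pieces gives exactly the claimed inequality
\[\text{Vol}(X,D+qS)\leq \text{Vol}(X,D)+q\dim(X)\,\text{Vol}(S,D|_S+qS|_S).\]

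The only mild obstacle is passing to the $\mathbb{R}$-Cartier setting: since $D$ need not be $\mathbb{Q}$-Cartier and $q$ need not be rational, I have to use the integer approximations $\lfloor mD\rfloor$ and $\lceil mq\rceil$ throughout and invoke upper semi-continuity of volume on the N\'eron-Severi space of $S$ to pass to the limit in the restricted term. This is routine but worth stating explicitly, as the elementary argument in the $\mathbb{Q}$-Cartier case for $q\in\mathbb{Q}$ would otherwise suffice with no limiting considerations at all.
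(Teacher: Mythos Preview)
The paper does not give its own proof of this lemma; it is cited from \cite{jiang2018birational}. Your restriction-and-telescope argument is the standard one and is correct for $\mathbb{Q}$-Cartier $D$ and rational $q$.

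There is, however, a gap in your treatment of the general $\mathbb{R}$-Cartier case. When $D$ is only $\mathbb{R}$-Cartier, the Weil divisor $E_m=\lfloor mD\rfloor$ need not be Cartier or even $\mathbb{Q}$-Cartier, so the symbol $(E_m+kS)|_S$ has no meaning as a divisor on $S$: the quotient sheaf $\mathcal{O}_X(E_m+kS)\otimes\mathcal{O}_S$ is a coherent sheaf on $S$ but not in general a line bundle. Consequently neither your monotonicity-in-$k$ step nor the asserted convergence $\tfrac{1}{m}(E_m+k_mS)|_S\to(D+qS)|_S$ in $N^1(S)_{\mathbb{R}}$ is well-posed as written, and upper semi-continuity of volume cannot be invoked on objects that are not numerical classes of divisors on $S$.

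The fix is the one you gesture at in your final sentence but do not actually carry out: prove the inequality first for $D$ $\mathbb{Q}$-Cartier and $q\in\mathbb{Q}_{>0}$ by restricting to $m$ sufficiently divisible that $mD$ is genuinely Cartier and $mq\in\mathbb{Z}$ (volume is a limit rather than merely a $\limsup$, so passing to such a subsequence is harmless), where your exact-sequence argument goes through verbatim. Then use continuity of the volume function on $N^1(X)_{\mathbb{R}}$ and on $N^1(S)_{\mathbb{R}}$ to extend to arbitrary $\mathbb{R}$-Cartier $D$ and real $q>0$: both sides of the claimed inequality are continuous in $(D,q)$, and the $\mathbb{Q}$-Cartier divisors together with rational $q$ are dense.
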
	

We now collect the necessary results from the positive characteristic MMP and consider a few applications.

	\begin{theorem}\cite[Theorem 1.7]{birkar2017existence}, \cite[Theorem 1.2]{birkar2013existence} \label{Cone Theorem}
	Let $k$ be an algebraically closed field of characteristic $p>5$. 
	Let $(X, \Delta)$ be a three-dimensional klt pair over $k$, together with a projective morphism $X \to Z$ a quasi-projective $k$ scheme, then there exists a $(K_X+\Delta)$-MMP over $Z$ that terminates. 
	
	In particular, if $X$ is $\mathbb{Q}$-factorial, then 
	there is a sequence of birational maps of three-dimensional normal and $\mathbb{Q}$-factorial varieties:  
	\[
	X=:X_0 \overset{\varphi_0}{\dashrightarrow} X_1 \overset{\varphi_1}{\dashrightarrow} \cdots \overset{\varphi_{\ell-1}}{\dashrightarrow} X_{\ell}
	\]
	such that if $\Delta_i$ denotes the strict transform of $\Delta$ on $X_i$, then
	the following properties hold:  
	\begin{enumerate}
		\item 
		For any $i \in \{0, \ldots, \ell\}$, 
		$(X_i, \Delta_i)$ is klt and projective over $Z$.
		\item 
		For any $i \in \{0, \ldots, \ell-1\}$, 
		$\varphi_i\colon X_i \dashrightarrow X_{i+1}$ is either a $(K_{X_i}+\Delta_i)$-divisorial contraction over $Z$ or a $(K_{X_i}+\Delta_i)$-flip over $Z$. 
		\item 
		If $K_X+\Delta$ is pseudo-effective over $Z$, then $K_{X_{\ell}}+\Delta_{\ell}$ is nef over $Z$. 
		\item 
		If $K_X+\Delta$ is not pseudo-effective over $Z$, then 
		there exists a $(K_{X_{\ell}}+\Delta_{\ell})$-Mori fibre space $X_{\ell} \to Y$ over $Z$. 
	\end{enumerate}
\end{theorem}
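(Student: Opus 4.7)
The statement is a compound MMP result for three-dimensional klt pairs in characteristic $p>5$, so my plan is to assemble it from the standard four pillars of the minimal model program: the cone theorem, the contraction theorem, the existence of flips, and termination. Conclusions $(1)$--$(4)$ then follow formally by iterating a single MMP step and keeping track of which type of extremal contraction is chosen at each stage; conclusion $(1)$ in particular requires checking that klt-ness is preserved under divisorial contractions and flips, which is a consequence of Lemma \ref{comparisonLemma} once each step is known to exist and to be a $(K_X+\Delta)$-negative contraction (respectively flip) of an extremal ray.

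First I would establish a cone theorem describing $\overline{NE}(X/Z)$ as the closure of the $(K_X+\Delta)$-nonnegative part together with countably many $(K_X+\Delta)$-negative extremal rays, each spanned by a rational curve of bounded intersection with $K_X+\Delta$. The classical bend-and-break argument of Mori works in positive characteristic and gives the rationality and boundedness of the extremal rays. Next, to any such extremal ray $R$ I need to attach a contraction $\varphi_R\colon X\to Y$ over $Z$ with $\varphi_{R,*}\mathcal{O}_X=\mathcal{O}_Y$; because Kawamata--Viehweg vanishing fails in characteristic $p$, one cannot quote the standard base-point-free theorem, and instead I would invoke Keel's semi-ampleness criterion together with the arguments of Birkar and Hacon--Xu to produce the contraction in dimension three for $p>5$. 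At this point divisorial contractions and Mori fibre spaces yield new klt models directly, so the only remaining issue is small contractions.

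The hard part, and the step I expect to be the real obstacle, is the existence of flips for small $(K_X+\Delta)$-negative extremal contractions in dimension three and characteristic $p>5$. My proposal is to follow the Hacon--Xu strategy: reduce the existence of a flip to the finite generation of the relative log canonical algebra $\bigoplus_m \varphi_{R,*}\mathcal{O}_X(m(K_X+\Delta))$, and establish this finite generation locally by passing to a plt blow-up of a carefully chosen boundary, restricting to the exceptional divisor, and using an extension theorem obtained from Frobenius-splitting or sharp $F$-purity along the divisor in place of the characteristic-zero Kawamata--Viehweg vanishing. The replacement of vanishing by $F$-singularity methods, and the control of lifting sections from the $S_2$-ification of the centre, is the genuinely characteristic-$p$ input that forces the restriction $p>5$.

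Finally I would handle termination. Since the MMP takes place in dimension three, termination reduces via the standard argument of Shokurov to special termination, i.e.\ termination of flips along a fixed divisor, plus the ACC for minimal log discrepancies in dimension two, both of which are available in characteristic $p$ for klt pairs by the work of Birkar and Hacon--Xu cited in the statement. Stringing these four pieces together yields a finite sequence $X=X_0\dashrightarrow\cdots\dashrightarrow X_\ell$ where each $\varphi_i$ is of the required type and each $(X_i,\Delta_i)$ is klt and projective over $Z$; the dichotomy in $(3)$ and $(4)$ is then immediate from whether $K_X+\Delta$ is pseudo-effective over $Z$, since the MMP with scaling terminates either with a nef model or with an extremal contraction of fibre type.
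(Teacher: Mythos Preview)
The paper does not prove this theorem: it is quoted verbatim as a known result from \cite{birkar2017existence} and \cite{birkar2013existence}, with no argument given. There is therefore no ``paper's own proof'' to compare your proposal against.

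That said, your outline is a fair high-level summary of the strategy in the cited works: cone theorem via bend-and-break, contractions via Keel's theorem and the arguments of Birkar and Hacon--Xu, existence of flips via reduction to pl-flips and $F$-singularity lifting in place of Kawamata--Viehweg, and termination via Shokurov-type arguments. One point where your sketch is imprecise is termination: for threefold klt pairs the actual argument does not go through ACC for minimal log discrepancies but rather through the classical discrepancy-counting (difficulty) argument, which is characteristic-free once flips exist. Another is that the passage from the $\mathbb{Q}$-boundary case (as in Hacon--Xu and Birkar's original paper) to the $\mathbb{R}$-boundary case stated here requires the additional work in \cite{birkar2017existence}. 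But since the present paper treats the result as a black box, none of this is something you would be expected to supply here.
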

	
	\begin{theorem}\cite[Theorem 10.4]{fujino2009fundamental}\label{dlt}
		Let $X$ be a normal quasi-projective variety of any dimension and characteristic for which the log MMP holds. Let $B$ be an effective divisor with $K_{X}+B$ $\mathbb{R}$-Cartier then there is a birational morphism $f\colon Y \to X$, called a dlt modification, such that the following holds:
		\begin{itemize}
			\item $Y$ is $\mathbb{Q}$-factorial;
			\item $a(E,X,B) \leq -1$ for every $f$ exceptional divisor $E$;
			\item If $B_{Y}=f^{-1}_{*}B' + \sum_{E \text{ exceptional}} E$ then $(Y,B_{Y})$ is dlt; and
			\item $K_{Y}+B_{Y}+F=f^{*}(K_{X}+B)$ where $F= \sum_{E\colon a(E,X,B)<-1} -(a(E,X,B)+1)E$.
		\end{itemize}
	where $B'$ has coefficient $\min\{\text{Coeff}_{E}(B),1\}$ at each $E$. Further if $(X,B)$ is a log pair then $F$ is exceptional.
	\end{theorem}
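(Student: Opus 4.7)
The plan is to construct $Y$ by the classical route: take a log resolution and run a relative MMP. First, I would take a log resolution $g : W \to X$ of $(X, B)$ and set
\[B_W := g^{-1}_{*} B' + \sum_{E \text{ g-exceptional}} E,\]
where $B'$ is $B$ with every coefficient capped at $1$. The pair $(W, B_W)$ is log smooth, so both $\mathbb{Q}$-factorial and dlt, and a direct discrepancy computation yields $K_W + B_W = g^{*}(K_X + B) + A - N - M$, where $A$ sums $(1+a(E,X,B))E$ over $g$-exceptional $E$ with $a(E,X,B) > -1$, $N$ sums $-(1+a(E,X,B))E$ over $g$-exceptional $E$ with $a(E,X,B) < -1$, and $M$ collects the excess over $1$ at non-exceptional components of $B$. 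The divisor $F$ of the statement will be the strict transform of $N + M$ on $Y$; in particular, if $(X,B)$ is a log pair then $M = 0$ and $F$ is automatically exceptional.

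Next, I would run a $(K_W + B_W)$-MMP over $X$. By the standing hypothesis that log MMP holds, the MMP terminates on a birational morphism $f : Y \to X$ with $K_Y + B_Y$ being $f$-nef, where $B_Y$ is the pushforward of $B_W$. Starting from a log smooth, $\mathbb{Q}$-factorial pair, each step preserves $\mathbb{Q}$-factoriality and dlt---one can reduce to a klt MMP via the perturbation $(W, B_W - \epsilon \lfloor B_W \rfloor)$ if dlt-MMP subtleties are a concern---so $Y$ is $\mathbb{Q}$-factorial and $(Y, B_Y)$ is dlt.

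The heart of the argument is showing $A_Y = 0$ on $Y$, i.e., every $g$-exceptional divisor with discrepancy $> -1$ is contracted. Set
\[D := K_Y + B_Y - f^{*}(K_X + B) \equiv_X A_Y - N_Y - M_Y,\]
which is $f$-nef since $K_Y + B_Y$ is. When $(X, B)$ is a log pair one has $M = 0$ and so $D$ is $f$-exceptional; the negativity lemma then gives $-D \geq 0$, and the disjointness of the supports of $A_Y$ and $N_Y$ forces $A_Y = 0$. In the general case the same conclusion holds after isolating the $f$-exceptional part of $D$, since $M_Y$ sits on non-exceptional primes that the MMP cannot alter. With $A_Y = 0$, the discrepancy identity above rearranges into the desired $K_Y + B_Y + F = f^{*}(K_X + B)$ with $F = N_Y + M_Y$, and the remaining properties drop out: the $f$-exceptional divisors are exactly those with $a(E, X, B) \leq -1$, each appearing in $B_Y$ with coefficient $1$ by construction.

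The main obstacle is the negativity-lemma step in the non-boundary case, where one must carefully separate the $f$-exceptional behavior of $D$ from the non-exceptional $M_Y$. A secondary technicality is dlt-preservation through flips, which in the paper's setting is guaranteed by the cited MMP results and otherwise handled by the small perturbation above.
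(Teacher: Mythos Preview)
The paper does not supply its own proof of this statement; it is quoted as \cite[Theorem 10.4]{fujino2009fundamental} and used as a black box. Your proposal is the standard construction (log resolution, relative $(K_W+B_W)$-MMP over $X$, negativity lemma), which is exactly the argument in the cited reference, and your outline is correct. Two small points worth tightening: first, you should note explicitly that since $W\to X$ is birational the relative MMP cannot terminate in a Mori fibre space, so it ends with $K_Y+B_Y$ $f$-nef; second, in the non-boundary case the clean way to run the negativity step is to observe that $f_*(-D)=f_*M_Y\ge 0$ while $D$ is $f$-nef, so the negativity lemma gives $-D\ge 0$ directly, and disjointness of the supports of $A_Y$ and $N_Y+M_Y$ then forces $A_Y=0$.
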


	\begin{theorem}[Nlc Cone Theorem]\label{NLCT}
	Let $(X,\Delta)$ be a threefold $\mathbb{Q}$-pair, over a closed field of characteristic $p > 5$. Then write $\overline{NE}(X/T)_{nlc}$ for the cone spanned by curves contained in the non log canonical locus of $X$. Then we have the following decomposition
	\[\overline{NE(X)}(X)=\overline{NE(X)}_{K_{X}+B \geq 0}+ \overline{NE(X)}_{nlc}+R_{i}\]
	where $R_{i}$ are extremal rays with $R_{i} \cap\overline{NE(X)}_{nlc}=\{0\}$, generated by curves $C_{i}$ such that $0> (K_{X}+B).C_{i} \geq -6$.
\end{theorem}
\begin{proof}
	If $(X,\Delta)$ is dlt this is part of the usual Cone Theorem \cite[Theorem 1.1]{birkar2017existence}.

	Suppose next that $\Delta=B+F$ where $(X,B)$ is dlt and $F$ has support contained in $\lfloor B \rfloor$. Note that if $C$ is an irreducible curve with $F.C <0$ then $C \subseteq F$. Therefore any effective curve $C$ can be written $C=C_{0} +C_{F}$ where $F.C_{0}\geq 0$ and $C_{F} \subseteq F$. Thus by compactness of the unit ball in a finite dimensional vector space, any $[\gamma] \in \overline{NE}(X/T)$ can be written $[\gamma] = [\gamma_{0}] + [\gamma_{F}]$ with $F.\gamma_{0} \geq 0$ and $[\gamma_{F}] \in \overline{NE}(F/T)$ in the same fashion.
	
	Take any $K_{X}+\Delta$ negative extremal ray $L$. Take a non-zero $[\gamma] \in L$, then as $L$ is extremal we have $[\gamma_{F}],[\gamma_{0}] \in L$. If $[\gamma_{F}] \neq 0$ then $L \subseteq \overline{NE}(F/T)$. Otherwise if $[\gamma_{F}]=0$ then $L$ is $K_{X}+B$ negative. Hence we can conclude the result from the Cone Theorem for dlt pairs.

	Suppose finally that $X$ is not dlt. Let $\pi \colon Y \to X$ be a dlt modification of $(X,B)$ with $(Y,B_{Y})$ dlt and $K_{Y}+B_{Y}+F=\pi^{*}(K_{X}+B)$. Take any $K_{X}+B$ negative extremal ray, $L$, such that $L \cap\overline{NE(X)}_{nlc}=\{0\}$. Take any class $\gamma$ with $[\gamma] \in L\setminus \{0\}$ and choose $[\gamma'] \in \overline{NE}(Y/T)$ with $f_{*}[\gamma']=[\gamma]$. Then by the projection formula we have that $(K_{Y}+B_{Y}+F).\gamma'=(K_{X}+B).f_{*}\gamma'=(K_{X}+B).\gamma < 0$. 
	
	From above, we can write $\gamma'=C_{0}+C_{F}+ \sum \lambda_{i}C_{i}$ where $\lambda_{i} >0$, $(K_{Y}+B_{Y}+F).C_{0} \geq 0$ $C_{F} \in \overline{NE}(F/T)$ and the $C_{i}$ each generate $(K_{Y}+B_{Y}+F)$ negative extremal rays with $-(K_{Y}+B_{Y}+F).C_{i} \leq 6$. 	
	From our choice of $R$ we must have $f_{*}C_{0}=f_{*}C_{F}=0$ and hence it follows that $[f_{*}C_{k}] \in R\setminus \{0\}$ for some $k$. Thus $(K_{X}+B).f_{*}C_{k}=(K_{Y}+B_{Y}+F).C_{k} \geq -6$.
	
	Since each $R$ is the pushforward of a $(K_{Y}+B_{Y})$ negative extremal ray, there are only countably many generating curves $C_{i}$ and they cannot accumulate in $(K_{X}+\Delta)_{< 0}$ else they would accumulate on $Y$ also.
\end{proof}

	\begin{lemma}
	Let $X$ be a normal curve over any field and $\Delta \geq 0 $ be a divisor with $-(K_{X}+\Delta)$ big and nef. Then the non-klt locus of $\Delta$ is either empty or geometrically connected. 
\end{lemma}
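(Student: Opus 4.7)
The plan is to adapt the Koll\'ar--Shokurov connectedness argument, which simplifies drastically on a curve since $X$ is already regular (no log resolution is needed). Write $\Delta = \sum_i a_i P_i$ and let $N = \sum_{a_i \geq 1} P_i$ be the reduced non-klt locus as a subscheme of $X$. I would extract the conclusion from the short exact sequence
\[ 0 \to \mathcal{O}_X(-N) \to \mathcal{O}_X \to \mathcal{O}_N \to 0, \]
by showing that $H^1(X, \mathcal{O}_X(-N)) = 0$, which then yields a surjection $H^0(X, \mathcal{O}_X) \twoheadrightarrow H^0(N, \mathcal{O}_N)$ governing the geometry of $N$.

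For the vanishing I would decompose
\[ -(K_X + N) = -(K_X + \Delta) + (\Delta - N). \]
The first summand is big and nef by hypothesis, and the second is effective because each coefficient of $\Delta$ at a point of $N$ is at least $1$. Hence $-(K_X+N)$ has strictly positive degree and is therefore ample on the proper regular curve $X$. Serre duality with $\omega_X \cong \mathcal{O}_X(K_X)$ gives
\[ H^1(X, \mathcal{O}_X(-N)) \cong H^0(X, \mathcal{O}_X(K_X + N))^{*} = 0, \]
since $\deg(K_X + N) < 0$.

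With the surjection in hand, set $k' := H^0(X, \mathcal{O}_X)$, a finite field extension of the base field $k$ because $X$ is proper and integral. Then $H^0(N, \mathcal{O}_N)$ is a quotient $k$-algebra of the field $k'$, so it is either zero or equal to $k'$. In the first case $N = \emptyset$; in the second, $N$ is a single closed point $P$ with $k(P) = k'$, and base change to $\bar{k}$ shows that $N \otimes_k \bar{k}$ has exactly the same set of connected components as $X \otimes_k \bar{k}$, placing one reduced geometric point on each geometric component of $X$. This is the asserted geometric connectedness.

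The one step requiring a little care is Serre duality when $X$ is regular but not smooth (an issue only over imperfect fields); this works with the relative dualizing sheaf, available because a normal curve is Cohen--Macaulay. Everything else is formal once the vanishing is established.
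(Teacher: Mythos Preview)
Your proof is correct and takes a genuinely different route from the paper's. The paper argues by pure degree counting: after base-changing to $k' = H^{0}(X,\mathcal{O}_{X})$ it invokes $\deg K_{X} = -2$ (citing Tanaka), so $\deg \Delta < 2$, and hence $\lfloor \Delta \rfloor$ can support at most one closed point, necessarily of degree $1$ over $k'$. You instead run the Koll\'ar--Shokurov connectedness machine in dimension one: the vanishing $H^{1}(X, \mathcal{O}_{X}(-N)) = 0$ via Serre duality forces $H^{0}(\mathcal{O}_{N})$ to be a quotient of the field $k'$, pinning $N$ down to a single closed point. The paper's approach is more elementary (no cohomology, just arithmetic of degrees) and delivers the sharper conclusion directly; your approach is the faithful one-dimensional specialisation of the higher-dimensional template and would transplant verbatim to any setting where the analogous $H^{1}$ vanishes. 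Both ultimately rest on the same inequality $\deg(K_{X}+N)<0$. One small quibble: your final sentence, that $N\otimes_{k}\bar{k}$ has the same connected components as $X\otimes_{k}\bar{k}$, only amounts to geometric connectedness of $N$ when $X$ is itself geometrically connected (equivalently $k'/k$ is purely inseparable). The paper's proof carries the same implicit hypothesis through its base change to $k'$, and in the intended application (the generic fibre of a Mori fibre space with $f_{*}\mathcal{O}_{X}=\mathcal{O}_{Z}$) this holds automatically.
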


\begin{proof}
	If $-(K_{X}+\Delta)$ is big and nef then so is $-K_{X}$. After base changing to $H^{0}(X,\ox)$ if necessary we have $\deg K_{X} = -2$ by \cite[Corollary 2.8]{tanaka2018minimal} giving that $ \deg \Delta <2$. The non-klt locus of $(X,\Delta)$ is precisely the support of $\lfloor \D \rfloor$ and hence can contain at most one point.
\end{proof}

\begin{theorem}\cite[Theorem 5.2]{tanaka2018minimal}\label{Tcl}
	Let $(X,\Delta)$ be a surface log pair over any field $\kappa$. Let $\pi\colon X \to S$ be a morphism of $\kappa$ schemes with $\pi_{*}\ox =\mathcal{O}_{S}$. Suppose that $-(K_{X}+\Delta)$ is $\pi$-nef and $\pi$-big, then for any $s \in S$ $X_{S}\cap \nklt(X,\Delta)$ is either empty or geometrically connected. 
\end{theorem}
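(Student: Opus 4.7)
The plan is to adapt the classical Koll\'ar--Shokurov connectedness principle to the positive characteristic surface setting, using Tanaka's relative Kawamata--Viehweg vanishing for surfaces in place of the usual form. First I would take a log resolution $f:Y \to X$ of $(X,\Delta)$, which exists in arbitrary characteristic for surfaces, and write $K_{Y} + \Delta_{Y} = f^{*}(K_{X}+\Delta)$. The divisor $\Delta_{Y}$ then decomposes as $\Delta_{Y} = \Gamma + F - E$, where $E \geq 0$ collects the components with negative coefficient (necessarily $f$-exceptional), $\Gamma \geq 0$ collects those with coefficient in $[0,1)$, and $F \geq 0$ those with coefficient at least $1$. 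Setting $N := \lfloor F \rfloor$, the support of this effective integral divisor coincides with $f^{-1}(\nklt(X,\Delta))$, and $f$ restricts to a surjection $N \to \nklt(X,\Delta)$; thus it suffices to show the scheme-theoretic fibre $N_{s}$ is empty or geometrically connected for each $s \in S$.

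I would then push the short exact sequence $0 \to \mathcal{O}_{Y}(-N) \to \mathcal{O}_{Y} \to \mathcal{O}_{N} \to 0$ forward along $\pi \circ f$. Using $(\pi \circ f)_{*}\mathcal{O}_{Y} = \pi_{*}f_{*}\mathcal{O}_{Y} = \pi_{*}\mathcal{O}_{X} = \mathcal{O}_{S}$ (valid since $X$ is normal and $\pi_{*}\mathcal{O}_{X} = \mathcal{O}_{S}$), the long exact sequence reduces the connectedness claim to the vanishing $R^{1}(\pi \circ f)_{*}\mathcal{O}_{Y}(-N) = 0$. From this, $(\pi \circ f)_{*}\mathcal{O}_{N}$ becomes a quotient of $\mathcal{O}_{S}$, forcing the geometric fibres of $N \to S$ to be empty or connected, which by the surjection $N \to \nklt(X,\Delta)$ transfers to the required statement for $X_{s} \cap \nklt(X,\Delta)$.

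To prove the vanishing I would compute
\[ -N - K_{Y} = (\Gamma + \{F\}) - E + f^{*}(-(K_{X}+\Delta)), \]
where $B := \Gamma + \{F\}$ is SNC with coefficients in $[0,1)$, $E$ is effective integral $f$-exceptional, and $f^{*}(-(K_{X}+\Delta))$ is $\pi \circ f$-nef and $\pi \circ f$-big by hypothesis. This places $\mathcal{O}_{Y}(-N)$ exactly in the shape to which Tanaka's relative Kawamata--Viehweg vanishing for surfaces applies, modulo the effective exceptional correction $-E$. One standard route to absorb this correction is to first apply the vanishing to $\mathcal{O}_{Y}(-N+E)$ and then use the sequence $0 \to \mathcal{O}_{Y}(-N) \to \mathcal{O}_{Y}(-N+E) \to \mathcal{O}_{E}(-N+E) \to 0$ together with negativity of the intersection form on $f$-exceptional curves to show the residual boundary term does not contribute to $R^{1}$.

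The main obstacle is ensuring that the relative Kawamata--Viehweg vanishing is available in a form strong enough to handle the exceptional correction. In characteristic zero the absorption of exceptional terms is entirely routine, but in positive characteristic one must rely on Tanaka's surface-specific vanishing theorems, which is also why the result is confined to dimension two: the analogous argument in dimension three would require KV-type vanishing statements that are known to fail in general in positive characteristic.
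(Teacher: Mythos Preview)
The paper does not supply its own proof of this statement: it is simply quoted from Tanaka's paper \cite{tanaka2018minimal} as an input, with no argument given. So there is nothing in the present paper to compare your proposal against directly.

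That said, your outline is the standard Koll\'ar--Shokurov connectedness argument, and it is essentially the method Tanaka uses. Two small points are worth tightening. First, the support of $N$ is not literally $f^{-1}(\nklt(X,\Delta))$; what you actually need, and what holds, is that $f(\operatorname{Supp} N) = \nklt(X,\Delta)$, so that connectedness of the fibres of $N \to S$ descends. Second, your handling of the exceptional correction $E$ is where the sketch becomes vague. The divisor $E$ need not be integral, so the sheaf $\mathcal{O}_{Y}(-N+E)$ and the proposed sequence through $\mathcal{O}_{E}(-N+E)$ do not make sense as written, and the appeal to negativity of the intersection form does not obviously close the gap. The clean route is to work from the outset with $\lceil E\rceil - N$ rather than $-N$: one has
\[
\lceil E\rceil - N - (K_{Y} + B') \;\equiv\; f^{*}\bigl(-(K_{X}+\Delta)\bigr),\qquad B' := \Gamma + \{F\} + (\lceil E\rceil - E),
\]
with $(Y,B')$ klt and SNC, so Tanaka's relative Kawamata--Viehweg gives $R^{1}(\pi f)_{*}\mathcal{O}_{Y}(\lceil E\rceil - N)=0$ directly. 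Then push forward the sequence $0 \to \mathcal{O}_{Y}(\lceil E\rceil - N) \to \mathcal{O}_{Y}(\lceil E\rceil) \to \mathcal{O}_{N}(\lceil E\rceil) \to 0$ and use that $\lceil E\rceil$ is effective and $f$-exceptional to identify $(\pi f)_{*}\mathcal{O}_{Y}(\lceil E\rceil) = \mathcal{O}_{S}$; since $\operatorname{Supp}(E)$ and $\operatorname{Supp}(N)$ are disjoint, connectedness of the fibres of $N \to S$ follows. With that adjustment your proof goes through.
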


\begin{theorem}[Weak Connectedness Lemma]\label{WCL}
	Let $X$ be a threefold over any closed field $\kappa$ of characteristic $p>5$ together with $\Delta\geq 0$ on $X$ such that $K_{X}+\Delta$ is $\mathbb{R}$-Cartier. Suppose that $-(K_{X}+\Delta)$ is ample, then $\nklt(X,\Delta)$ is either empty or connected. 
\end{theorem}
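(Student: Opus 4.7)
My approach follows the classical Koll\'ar--Shokurov strategy: reduce connectedness to a cohomology vanishing via a dlt modification. First, take a $\mathbb{Q}$-factorial dlt modification $f : Y \to X$ of $(X, \Delta)$, so that $K_Y + B_Y + F = f^*(K_X+\Delta)$ with $(Y, B_Y)$ dlt and $F \ge 0$ supported on exceptional divisors of discrepancy $< -1$. Let $T$ be the reduced divisor $\lfloor B_Y + F \rfloor_{\mathrm{red}}$, which coincides with the non-klt locus of $(Y, B_Y+F)$. Then $f(T) = \nklt(X, \Delta)$, and since $X$ is normal we have $f_*\mathcal{O}_Y = \mathcal{O}_X$, so $f$ has geometrically connected fibres by Zariski's connectedness theorem. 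Hence $\nklt(X, \Delta)$ is connected if and only if $T$ is.

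From the short exact sequence
\[ 0 \to \mathcal{O}_Y(-T) \to \mathcal{O}_Y \to \mathcal{O}_T \to 0, \]
the vanishing $H^1(Y, \mathcal{O}_Y(-T)) = 0$ forces $\kappa = H^0(Y, \mathcal{O}_Y)$ to surject onto $H^0(T, \mathcal{O}_T)$, which makes $T$ connected. Rewriting
\[ -T \sim_{\mathbb{R}} K_Y + (B_Y + F - T) + \bigl( -f^*(K_X+\Delta) \bigr), \]
the right-hand side expresses $-T$ as $K_Y$ plus an effective divisor $(B_Y + F - T)$ with coefficients in $[0,1)$ making $(Y, B_Y+F-T)$ sub-klt, plus a big and nef $\mathbb{R}$-class $-f^*(K_X+\Delta)$. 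In characteristic $0$ this is precisely the input for Kawamata--Viehweg vanishing with fractional boundary. In characteristic $p > 5$, I would appeal to the three-dimensional KV-type vanishing for dlt pairs established in sufficiently large characteristic (Hacon--Witaszek, Bernasconi, and others).

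As a fallback avoiding a direct vanishing theorem, one may proceed by MMP: assume $T = T_1 \sqcup T_2$ is disconnected, perturb to $\Gamma_\epsilon := (B_Y + F) - \epsilon T_1^{\mathrm{red}}$ for small $\epsilon > 0$ so that $(Y, \Gamma_\epsilon)$ is klt in a neighbourhood of $T_1$ with non-klt locus exactly $T_2$, and run a $(K_Y + \Gamma_\epsilon)$-MMP over $X$ via Theorem \ref{Cone Theorem}. Because the negative part of $K_Y + \Gamma_\epsilon$ over $X$ is concentrated along $T_1$ and $T_2$ is disjoint from $T_1$, this MMP must contract $T_1$ entirely while leaving $T_2$ unaffected. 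A subsequent adjunction to a component of $T_2$, combined with Tanaka's surface connectedness (Theorem \ref{Tcl}), would then contradict the ampleness of $-(K_X+\Delta)$.

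The main obstacle is the vanishing/separation step in positive characteristic: Kawamata--Viehweg fails for general three-folds in characteristic $p$, so one must either invoke a specialised KV-type vanishing for dlt threefolds in characteristic $p > 5$, or carry out the MMP fallback. In the latter route, the delicate point is verifying that the perturbed pair admits a well-defined MMP contracting precisely $T_1$, and that the final adjunction configuration genuinely forces a contradiction with ampleness rather than merely producing a disconnected non-klt configuration that is a priori consistent.
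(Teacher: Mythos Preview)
Your first route via Kawamata--Viehweg is not available here. The KV-type results for threefolds in characteristic $p>5$ that you allude to (Hacon--Witaszek, Bernasconi) require additional hypotheses---typically that the pair be of log Fano type or that one works relative to a birational base with klt singularities downstairs---none of which hold for an arbitrary $(X,\Delta)$ with $\Delta\geq 0$ and $K_X+\Delta$ merely $\mathbb{R}$-Cartier. So the vanishing $H^1(Y,\mathcal{O}_Y(-T))=0$ is genuinely unavailable, and this branch of the argument does not close.

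Your fallback is closer in spirit to what the paper does, but the setup is wrong in a way that matters. Running a $(K_Y+\Gamma_\epsilon)$-MMP \emph{over $X$} cannot contract $T_1$ entirely when $T_1$ contains strict transforms of components of $\lfloor\Delta\rfloor$: such components are not $f$-exceptional, so $-\epsilon T_1^{\mathrm{red}}$ is not contractible over $X$ and the relative MMP will simply terminate with a minimal model on which $T_1$ survives. The subsequent ``adjunction to a component of $T_2$ combined with Tanaka's surface connectedness'' is also too vague to produce a contradiction; you have not arranged a morphism to which Theorem~\ref{Tcl} applies.

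The paper's argument is an \emph{absolute} MMP to a Mori fibre space, not a relative one. After the dlt modification, it absorbs the pullback $-f^*(K_X+\Delta)$ into an ample divisor $G_Y$ on $Y$ (writing $L=A+E$ with $E$ effective exceptional, hence supported in $S_Y=\lfloor\Delta_Y\rfloor$, and taking $G_Y\sim \epsilon A+(1-\epsilon)L-\delta S_Y$). This makes $(Y,\Delta_Y+G_Y)$ dlt with $-(K_Y+\Delta_Y+G_Y)\sim P_Y$ effective and supported exactly on $S_Y$. One then runs a $(K_Y+\Delta_Y+G_Y)$-MMP, which terminates in a Mori fibre space $Y'\to Z$; by the standard argument (as in Birkar) the number of connected components of $\mathrm{Supp}(P_{Y'})=\lfloor\Delta_{Y'}\rfloor$ is unchanged. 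The contradiction then comes from the structure of $Y'\to Z$: if $\dim Z=0$ then $\rho(Y')=1$ forces any two effective divisors to meet; if $\dim Z>0$ one restricts to the generic fibre and invokes the curve or surface connectedness lemma there. The key idea you are missing is to trade the ample class on $X$ for an ample boundary piece on $Y$ so that an absolute MMP lands on a Mori fibre space, where lower-dimensional connectedness finishes the job.
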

\begin{proof}
	If $(X,\D)$ is klt the result is trivially true, so suppose otherwise.
	
	Let $(Y,\D_{Y}) \to (X,\D)$ be a dlt modification. Then $-L:=K_{Y}+\D_{Y}+F=f^{*}(K_{X}+\D)$ with $(Y,\D_{Y})$ dlt and $L$ nef and big. We may further write $L=A+E$ with $A$ ample and $E$ effective and exceptional over $X$. In particular $E$ has support contained inside $S_{Y}=\lfloor \D_{Y} \rfloor$. Note that $S_{Y}$ maps surjectively onto $\nklt(X,\D)$ so it is sufficient to show that $S_{Y}$ is connected.
	
	Take a general $G_{Y} \sim \epsilon A +(1-\epsilon) L-\delta S_{Y}$, then for small $\delta$ we may assume $G_{Y}$ is ample, and hence further that $(X,\D_{Y}+G_{Y})$ is dlt. Write $K_{Y}+\D_{Y}+G_{Y}\sim - P_{Y}=-(\epsilon E + F + \delta S_{Y})$ and note $\text{Supp}(P_{Y})=S_{Y}$. In particular $K_{Y}+\D_{Y}+G_{Y}$ is not pseudo-effective and hence we may run a $(Y,\D_{Y}+G_{Y})$ LMMP which terminates in a Mori fibre spaces $Y' \to Z$. By the arguments of \cite[Theorem 9.3]{birkar2013existence} on the induced pair $(Y',\D_{Y'})$, $\nklt(Y',\D_{Y'})=\text{Supp}(\lfloor \D_{Y'} \rfloor)=\text{Supp}(P_{Y'})$ has the same number of connected components as $\nklt(X,\Delta)$, so it suffices to prove the result here.
	
	Suppose first that $\dim Z=0$. Then $\rho(Y')=1$. In particular if $D,D'$ are effective and $H$ ample, then $H.D.D' >0$, so certainly $D.D'>0$. Thus $P_{Y'}$ cannot have disconnected support.
	
	Suppose next that $\dim Z > 0 $. Let $T$ be the generic fibre. We must have $P_{Y'}|_{T}> 0$ since $Y' \to Z$ is a $P_{Y'}\sim -(K_{Y'}+\Delta_{Y'}+G_{Y'})$ positive contraction. However $P_{Y'}$ has the same support as $\lfloor \D_{Y'} \rfloor$ so at least one connected component must dominate $Z$. Suppose then, for contradiction, there is a second connected component. Clearly it must also dominate $Z$, else it could not possibly be disjoint from the first. Consider then $(T,\D_{T}=\D_{Y'}|_{T})$. Since $T \to Y'$ is flat, the pullback of $\D_{Y'}$ is just the inverse image, and in particular $\lfloor \D_{T} \rfloor$ contains the pullback of both connected components. Suppose $R$ is the extremal ray whose contraction induces the Mori fibration. Then we have $-(K_{Y'}+\D_{Y'}+G_{Y'}).R >0$, but since $R$ is spanned by a nef curve, as contracting it defines a fibration, and $G_{Y'}$ is effective, we must have $G_{Y'}.R \geq 0$. Hence in fact $-(K_{Y'}+\D_{Y'}).R >0$ also, and so $-K_{T}+\D_{T}$ is ample. Then, however, the non-klt locus of $(T,\D_{T})$ must be connected, a contradiction.
\end{proof}

\begin{lemma}\label{cc} \cite[Proposition 4.37]{kollar2013singularities}
		Suppose that $(S,B)$ is a klt surface and $(K_{S}+B+D) \sim 0$ for $D$ effective, integral and disconnected, then $D$ has exactly two connected components.
\end{lemma}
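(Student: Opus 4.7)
The plan is to argue by contradiction, using the two-dimensional MMP and the connectedness lemma (Theorem \ref{Tcl}). Suppose $D = D_1 \sqcup \cdots \sqcup D_n$ has $n \geq 3$ connected components. Consider the auxiliary log canonical pair $(S, B + D_1 + D_2)$, whose non-klt locus contains the disjoint union $D_1 \sqcup D_2$ and which satisfies
\[
K_S + B + D_1 + D_2 \sim -(D_3 + \cdots + D_n) =: -E,
\]
with $E \geq 0$ nonzero. In particular $K_S + B + D_1 + D_2$ is not pseudo-effective.

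First I would run a $(K_S + B + D_1 + D_2)$-MMP on this lc surface pair (available in arbitrary characteristic in dimension two). This terminates in a Mori fibre space $\pi : S' \to T$ with $\dim T \in \{0, 1\}$; the pushforward pair $(S', B'')$ is still lc, and crucially its non-klt locus contains at least two disjoint pieces. Indeed, the $D_i$ being pairwise disjoint implies their strict transforms remain pairwise disjoint throughout the MMP; and if a full connected component is contracted to a point, that image point is still an lc (non-klt) center of the resulting pair, disjoint from the other pieces. Moreover $-(K_{S'} + B'') \sim E'$ is effective, nonzero (the MMP preserves non-pseudo-effectiveness), and $\pi$-ample by the extremal ray condition.

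If $\dim T = 0$ then $\rho(S') = 1$ and $E'$ is ample, so $-(K_{S'}+B'')$ is big and nef; Theorem \ref{Tcl} then forces the non-klt locus to be geometrically connected, contradicting the disjointness established above. If $\dim T = 1$ then $\pi$ is a Mori fibration of relative dimension one; applying Theorem \ref{Tcl} to $\pi$ forces $\nklt(S', B'') \cap \pi^{-1}(t)$ to be connected for every $t \in T$, so the disjoint non-klt pieces must be vertical and lie over distinct points of $T$. Intersecting $K_{S'} + B'' + E' \sim 0$ with a general fibre $F \cong \mathbb{P}^1$ (using $K_{S'} \cdot F = -2$, vanishing of $D_1' \cdot F$ and $D_2' \cdot F$, and $B'' \cdot F \geq 0$) gives $0 < E' \cdot F \leq 2$, bounding the number of horizontal components of $E'$ by $2$; a finer analysis of how these horizontal components and the vertical lc pieces can coexist (iterating the connectedness lemma on suitable further auxiliary pairs) then rules out $n \geq 3$.

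The principal obstacle is the case $\dim T = 1$: while $\dim T = 0$ is a direct application of Theorem \ref{Tcl}, the case $\dim T = 1$ requires a delicate geometric analysis of the conic bundle structure to combine the fibre-wise connectedness constraints with intersection-theoretic input and ultimately exclude three-or-more-component configurations. The reduction via MMP and the $\dim T=0$ case are essentially formal once one accepts the inheritance of non-klt centers under MMP steps.
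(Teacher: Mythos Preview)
The paper does not prove this lemma at all; it is stated with a bare citation to Koll\'ar's book and no argument is given. So there is no ``paper's own proof'' to compare against, and your proposal stands or falls on its own.

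Your strategy---run an MMP on the auxiliary lc pair $(S,B+D_1+D_2)$ and apply the connectedness lemma Theorem~\ref{Tcl} on the resulting Mori fibre space---is natural and closely parallels the paper's own proof of the Weak Connectedness Lemma for threefolds. The claim that the number of connected components of the non-klt locus is preserved under the MMP is exactly what the paper invokes (via \cite{birkar2013existence}[Theorem~9.3]) in that argument, so that step is fine. The $\dim T=0$ case then goes through cleanly.

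The genuine gap is the $\dim T=1$ case, and you flag it yourself. The problem is that Theorem~\ref{Tcl} only gives fibrewise connectedness, which forces the two non-klt pieces into distinct fibres but does not by itself exclude $n\geq 3$. Your sketch then appeals to intersection numbers to bound the horizontal components of $E'$ by two, but this bounds the wrong thing: it constrains $D_3',\dots,D_n'$, not the original count $n$, and more seriously the MMP you ran was a $(-E)$-MMP, so the contracted curves meet $E$ and there is no reason the images $D_3',\dots,D_n'$ remain pairwise disjoint or disjoint from the images of $D_1,D_2$. The phrase ``a finer analysis\dots then rules out $n\geq 3$'' is where the actual content would have to go, and it is not supplied.

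Koll\'ar's argument in \cite{kollar2013singularities} handles this by establishing a $\mathbb{P}^1$-link structure: after a dlt modification and MMP one lands on a standard $\mathbb{P}^1$-bundle where the two connected components of $\lfloor\Delta\rfloor$ are disjoint sections, and one reads off directly that no third component can exist. If you want to salvage your approach, the cleanest fix is probably to run the MMP with the \emph{full} boundary $(S,B+D)$ after a small perturbation (or pass to a dlt model first), so that all of $D_1,\dots,D_n$ are tracked as boundary components and their disjointness is controlled by the lc condition throughout; then on the terminal ruled surface the classification of reduced boundary in an lc Calabi--Yau pair finishes the job.
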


Finally we collect some needed Bertini type theorems. 
	
\begin{theorem}\cite[Theorem 1]{tanaka2017semiample}
	Let $(X,\Delta)$ be a log canonical (resp. klt) pair over an algebraically closed field where $\Delta$ is an effective $\mathbb{Q}$-divisor. Suppose $D$ is a semiample divisor on $X$ then there is an effective divisor $D'\sim D$ with $(X,\Delta+D')$ log canonical (resp. klt).
\end{theorem}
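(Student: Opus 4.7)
The plan is to exploit semiampleness to reduce the question to a Bertini-type argument on a log resolution, using the freedom to replace $D$ by a large multiple $mD$ so that the pullback of a general hyperplane section contributes only arbitrarily small coefficients. Concretely, I would first choose $m \gg 0$ so that $|mD|$ is basepoint-free and defines a morphism $f \colon X \to Y$ with connected fibres, where $Y$ is projective and $mD \sim f^{*}H$ for some very ample divisor $H$ on $Y$; it then suffices to choose a suitable $H' \in |H|$ and set $D' := \tfrac{1}{m}f^{*}H'$, which satisfies $D' \sim_{\mathbb{Q}} D$.

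Next, take a log resolution $\pi \colon \tilde{X} \to X$ of $(X, \Delta)$ and write $K_{\tilde{X}} + \tilde{\Delta} = \pi^{*}(K_X + \Delta)$, so that $(\tilde{X}, \tilde{\Delta})$ is sub lc (resp.\ sub klt) with simple normal crossings support. The goal reduces to showing that $(\tilde{X}, \tilde{\Delta} + \tfrac{1}{m}(f \circ \pi)^{*}H')$ is sub lc (resp.\ sub klt) for suitable $H'$; pushing forward along $\pi$ then yields the desired pair on $X$. To find $H'$, I would apply Bertini-type theorems on the projective variety $Y$: since $H$ is very ample, a general member $H' \in |H|$ is reduced and irreducible, and may be chosen to avoid the images of the finitely many prime subvarieties in $\tilde{\Delta} \cup \mathrm{Exc}(\pi)$ which do not dominate $Y$. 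Consequently $(f \circ \pi)^{*}H'$ shares no prime component with $\tilde{\Delta}$ apart from those already dominating $Y$, and on each such dominating component the behaviour can be controlled through the generic fibre.

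The main obstacle is the failure of smoothness Bertini in positive characteristic: $(f \circ \pi)^{*} H'$ may be non-reduced, and its support need not be smooth or transverse to $\tilde{\Delta}$. The key observation is that, for a fixed morphism $f \circ \pi$, the multiplicities of $(f \circ \pi)^{*}H'$ along its prime components are bounded by geometric invariants of $f \circ \pi$ (essentially the inseparable degrees along the relevant components), which are independent of the choice of $H'$ and of $m$. Replacing $m$ by a sufficiently large multiple then shrinks the contributions $c_i/m$ below any prescribed positive number, which is enough to preserve the sub lc (resp.\ sub klt) condition at the newly introduced components. The most delicate point is the behaviour at intersections of $(f \circ \pi)^{*}H'$ with $\tilde{\Delta}$, where transversality cannot be guaranteed and one must argue more carefully using the smallness of $1/m$ combined with the coefficient bounds on $\tilde{\Delta}$.
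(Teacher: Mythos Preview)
The paper does not give its own proof of this statement: it is quoted verbatim as a citation of \cite[Theorem 1]{tanaka2017semiample} and used as a black box (to derive the subsequent corollary on sub-klt perturbations). So there is no in-paper argument to compare your proposal against.

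Evaluating your sketch on its own merits: the overall strategy is the natural one and matches the shape of Tanaka's proof, but you have correctly located the crux and then stopped short of resolving it. Your claim that the multiplicities of $(f\circ\pi)^{*}H'$ along its prime components are bounded independently of $m$ is fine once one fixes the Stein factorisation $f$, and dividing by $m$ does force the \emph{coefficients} below any threshold. However, controlling coefficients alone does not give sub-lc (or sub-klt): you still need to control discrepancies on a log resolution of $(\tilde{X},\tilde{\Delta}+\tfrac{1}{m}(f\circ\pi)^{*}H')$, and that resolution depends on $H'$, hence on $m$, so the ``constants'' you would need are not obviously uniform. In the lc case the issue is sharper: if $(f\circ\pi)^{*}H'$ passes through an lc centre of $(\tilde{X},\tilde{\Delta})$ that dominates $Y$---which you cannot avoid by genericity---then an arbitrarily small coefficient can already destroy log canonicity unless you know something about how $H'$ meets that centre. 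Your final sentence (``one must argue more carefully using the smallness of $1/m$ combined with the coefficient bounds on $\tilde{\Delta}$'') is precisely the content of Tanaka's theorem, not a step one can gesture past. To close the gap you would need, for instance, an argument that takes many general members $H'_1,\dots,H'_N\in|H|$ with $N>\dim Y$ so that their common intersection is empty, and then analyses the local picture stratum by stratum; this is essentially what Tanaka does.
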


\begin{corollary}\label{average}
	Suppose that $(X,\Delta)$ is a sub klt pair over an algebraically closed field together with $D$ a divisor on $X$ and $\pi\colon (X',\Delta') \to X$ a log resolution of $(X,\Delta)$. Further assume that there is some $D'$ on $X'$ with $\pi_{*}D'=D$, $-(K_{X'}+\Delta'+D')$ $\pi$-nef, $(X,\Delta')$ sub klt and $D'$ semiample. Then there is $E \sim D$ on $X$ effective with $(X,\Delta+E)$ sub klt. If in fact $(X,\Delta)$ is $\epsilon$-klt then we may choose $E$ such that $(X,\Delta+E)$ is also.
\end{corollary}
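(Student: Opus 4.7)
My plan is to apply Tanaka's semiample perturbation theorem on the log resolution $X'$, push the resulting effective divisor down to $X$, and transfer the klt property via Lemma \ref{comparisonLemma}.

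First, I would pass to the effective part: write $\Delta' = \Delta'_{+} - \Delta'_{-}$ with $\Delta'_{\pm} \geq 0$ of disjoint support. Because $\pi$ is a log resolution and $(X',\Delta')$ is sub klt, the support of $\Delta'$ is SNC with coefficients less than $1$, so $(X', \Delta'_{+})$ is an honest klt pair. Applying Tanaka's theorem to $(X', \Delta'_{+})$ with the semiample divisor $D'$ produces an effective $E' \sim D'$ making $(X', \Delta'_{+} + E')$ klt, and subtracting the effective $\Delta'_{-}$ only increases log discrepancies, so $(X', \Delta' + E')$ is sub klt.

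Next I would push down and compare. Set $E := \pi_{*} E'$; since $\pi$ is birational, $E \sim \pi_{*} D' = D$ and $\pi_{*}(\Delta' + E') = \Delta + E$. Applying Lemma \ref{comparisonLemma} to the pairs $(X', \Delta' + E')$ (equipped with $\pi$) and $(X, \Delta + E)$ (equipped with $\mathrm{id}_{X}$), the pushforwards agree, the divisor $-(K_{X'}+\Delta'+E') \sim -(K_{X'}+\Delta'+D')$ is $\pi$-nef by hypothesis, and $\mathrm{id}$-nefness of $(K_{X}+\Delta+E)$ is vacuous. The comparison lemma then gives $a(F, X, \Delta+E) \geq a(F, X', \Delta'+E') > -1$ for every prime divisor $F$ with center on $X$, so $(X, \Delta+E)$ is sub klt.

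For the $\epsilon$-klt refinement, log discrepancies transfer along log pullback, so if $(X,\Delta)$ is $\epsilon$-klt then $(X', \Delta'_{+})$ is already $\epsilon$-klt with SNC coefficients at most $1-\epsilon$. The main obstacle is that Tanaka's theorem as stated only delivers klt; to preserve the $\epsilon$-margin I would refine the choice of $E'$ by taking $m$ large enough that $|mD'|$ is basepoint-free and $1/m < \epsilon$, and choosing $E'$ so that $mE'$ is a sufficiently general member of $|mD'|$. The Bertini-type input underlying Tanaka then arranges that $mE'$ is reduced, SNC with $\Delta'_{+}$, and shares no component with it, so along every prime divisor the coefficient of $\Delta'_{+} + E'$ is bounded by $\max(1-\epsilon, 1/m) \leq 1-\epsilon$. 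Hence $(X', \Delta'_{+} + E')$ is $\epsilon$-klt and the comparison above upgrades to sub $\epsilon$-klt on $X$.
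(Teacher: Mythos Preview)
Your argument for the sub klt conclusion is correct and essentially identical to the paper's: decompose $\Delta'=\Delta'_{+}-\Delta'_{-}$, apply Tanaka on $X'$ to get $E'\sim D'$ with $(X',\Delta'_{+}+E')$ klt, hence $(X',\Delta'+E')$ sub klt, then push down and compare. Your appeal to Lemma~\ref{comparisonLemma} and the paper's direct use of the negativity lemma are the same step in different packaging.

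The gap is in your $\epsilon$-klt refinement. You assert that ``the Bertini-type input underlying Tanaka'' makes a general $mE'\in|mD'|$ reduced and SNC with $\Delta'_{+}$. In positive characteristic this is not available: basepoint-free Bertini can fail, and Tanaka's theorem does \emph{not} produce an SNC (or even reduced) member of the linear system---its proof goes through Frobenius techniques, not through genericity of a hyperplane section. So the coefficient bound $\max(1-\epsilon,1/m)$ is unjustified.

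The paper sidesteps this with a purely numerical interpolation. One applies Tanaka to $(X',\Delta'_{+})$ with the semiample divisor $mD'$, obtaining $E''\sim mD'$ with $(X',\Delta'_{+}+E'')$ merely klt, and then sets $E'=\tfrac{1}{m}E''$. Since log discrepancies are affine in the boundary,
\[
a\bigl(F,X',\Delta'_{+}+\tfrac{1}{m}E''\bigr)=\bigl(1-\tfrac{1}{m}\bigr)a(F,X',\Delta'_{+})+\tfrac{1}{m}\,a(F,X',\Delta'_{+}+E''),
\]
and the log-smooth pair $(X',\Delta'_{+})$ is in fact $(\epsilon+\delta)$-lc where $\delta=\min_i(1-\epsilon-c_i)>0$. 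Taking $1/m<\delta$ (not $1/m<\epsilon$) then forces the right-hand side above $\epsilon-1$, with no SNC claim on $E''$ needed. The rest of the pushdown and comparison then goes through exactly as in your klt argument.
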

\begin{proof}
	We may write $\Delta'=\Delta_{p}-\Delta_{n}$ as the difference of two effective divisors. Since $(X',\Delta')$ is log smooth we must have that $(X',\Delta_{p})$ is klt. Thus by the proceeding theorem we have that there is some $E' \sim D'$ with $(X',\Delta_{p}+E')$ klt. Then we must also have that $(X',\Delta'+E')$ is sub klt 
	Write $E=\pi_{*}E'$, then $R=\pi^{*}(K_{X}+\Delta+E)- (K_{X'}+\Delta'+E')\equiv_{f}-(K_{X'}+\Delta'+D')$ is $\pi$-nef and exceptional. Hence by the negativity lemma we have that $-R$ is effective, and $\pi^{*}(K_{X}+\Delta+E) \leq (K_{X'}+\Delta'+E')$ giving that $(X,\Delta +E)$ is klt.
	
	If $(X',\Delta)$ is $\epsilon$-klt then so is $(X',\Delta_{p})$. Let $\delta =\min (1-\epsilon-c_{i})$ where $c_{i}$ are the coefficients of $\Delta_{p}$ and take $m \in \mathbb{N}$ such that $\frac{1}{m} < \delta$. Applying the previous theorem to $mD'$ instead of $D'$, yields $E'' \sim mD$ with $(X',\Delta'+E'')$ klt. Taking $E'=\frac{1}{m}E$ then continuing as above gives the required divisor. 
\end{proof}

\begin{theorem}\cite[Corollary 1.6]{patakfalvi2017singularities}\label{smoothness}
	Let $f\colon X \to Z$ be a projective fibration of relative dimension $2$ from a terminal variety with $f_{*}\ox=\mathcal{O}_{Z}$ over a perfect field of positive characteristic $p > 7$, such that $-K_{X}$ is ample over $Z$. Then a general fibre of $f$ is smooth.
\end{theorem}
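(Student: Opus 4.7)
The plan is to pass to the geometric generic fibre $F = (X_\eta)_{\overline{K(Z)}}$ of $f:X\to Z$ and show $F$ is smooth; smoothness of a general closed fibre then follows by semicontinuity on $Z$. Since $X$ is terminal and normal with $f_*\ox=\mathcal{O}_Z$, the generic fibre $X_\eta$ is a normal projective surface over $K(Z)$ with $-K_{X_\eta}$ ample, i.e.\ a regular del Pezzo surface over a possibly imperfect field. Two failure modes must be ruled out: (a) $F$ is not geometrically normal; or (b) $F$ is geometrically normal but singular.

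To rule out (a), suppose $F$ fails to be geometrically normal. Taking the normalisation $\nu: F^{\nu}\to F$ with conductor $C$, a standard purely inseparable adjunction calculation (cf.\ the work of Reid, Schr\"oer, and Patakfalvi) gives
\[
\nu^{*}K_F \sim K_{F^{\nu}} + D,
\]
where $D$ is an effective $\mathbb{Q}$-divisor with $D\ge (1-\tfrac{1}{p})C$ supported on the conductor. Ampleness of $-K_F$ then makes $(F^\nu,D)$ a weak log del Pezzo with boundary coefficients bounded below by $\tfrac{p-1}{p}$. Classifying such non-geometrically-normal del Pezzos (the heart of the argument, and where $p\ge 11$ is essential, since sporadic examples persist for $p\in\{2,3,5,7\}$) and combining with the canonical bundle formula applied at the generic point of $Z$ on $X$, one extracts an exceptional divisor of discrepancy $\le -1 + \tfrac{1}{p}$ with respect to $(X,0)$, contradicting terminality of $X$.

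For (b), terminality of $X$ forces $\dim \mathrm{Sing}(X) \le 0$, so a general closed fibre avoids $\mathrm{Sing}(X)$ and lies in $X^{\mathrm{sm}}$. Any residual singularity on $F$ must then come from non-smoothness of $f$ at the generic point of $F$, i.e.\ from inseparability of the extension $K(X)/K(Z)$. Such a wild fibration factors through a relative Frobenius, and the induced conductor adds a $(p-1)$-multiple of an effective divisor to the relative canonical divisor, pushing the discrepancy of some divisor on $X$ below zero; for $p \ge 11$ the positive contribution from $-K_X$ being $f$-ample cannot offset this correction, once again contradicting $X$ terminal. The hard step is (a): controlling geometrically non-normal generic fibres requires the explicit structural results of Patakfalvi--Waldron on wild del Pezzos, and the bound $p \ge 11$ enters crucially to eliminate the low-characteristic exceptional cases, so any approach must extract quantitative bounds from the inseparable conductor analysis rather than proceed by formal discrepancy bookkeeping alone.
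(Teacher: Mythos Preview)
The paper does not give a proof of this statement: it is quoted verbatim as \cite[Corollary 1.6]{patakfalvi2017singularities}, a preliminary result imported from elsewhere, with no argument supplied. There is therefore no ``paper's own proof'' to compare your attempt against.

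As for the sketch itself: the overall architecture (pass to the geometric generic fibre, split into the geometrically non-normal and geometrically normal-but-singular cases, and use conductor-type formulae for the purely inseparable normalisation) is indeed the shape of the argument in the cited source. However, what you have written is an outline that defers the substantive work back to ``the explicit structural results of Patakfalvi--Waldron on wild del Pezzos,'' which is essentially the result you are trying to prove; so as a self-contained proof it is circular.

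There is also a concrete error in case~(b). Once you have assumed $F$ is geometrically \emph{normal}, it is in particular geometrically reduced, and hence the extension $K(X)/K(Z)$ is automatically separable; so the ``inseparability of $K(X)/K(Z)$'' you invoke cannot be the source of the remaining singularities, and the relative Frobenius factorisation you describe does not occur. The genuine content of case~(b) is that $X_\eta$ is a \emph{regular} del Pezzo surface over the (imperfect) field $K(Z)$ which is geometrically normal but not smooth; ruling this out for $p\ge 11$ requires the classification of such surfaces (due to Fanelli--Schr\"oer, Patakfalvi--Waldron, et al.), not a discrepancy count on $X$. Your argument that terminality of $X$ forces a general closed fibre to lie in $X^{\mathrm{sm}}$ only shows $X_\eta$ is regular, which was already known; it says nothing about smoothness over $K(Z)$.
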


\begin{theorem}[Bertini for residually separated morphisms]\cite[Theorem 1]{cumino1986axiomatic}\label{Bertini}
	Let $f\colon X \to \mathbb{P}^{n}$ a residually separated morphism of finite type from a smooth scheme over a closed field. Then the pullback of a general hyperplane $H$ on $\mathbb{P}^{n}$ is smooth.
\end{theorem}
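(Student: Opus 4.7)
The plan is the standard incidence-correspondence approach to Bertini, with the residual separatedness hypothesis used precisely where positive characteristic would otherwise cause generic smoothness to fail. I would form the incidence variety $I = X \times_{\mathbb{P}^{n}} \Sigma$, where $\Sigma \subset \mathbb{P}^{n} \times (\mathbb{P}^{n})^{\vee}$ is the universal hyperplane, together with its projections $p_{1}: I \to X$ and $p_{2}: I \to (\mathbb{P}^{n})^{\vee}$.

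First I would check that $I$ is smooth. The projection $p_{1}$ is a $\mathbb{P}^{n-1}$-bundle, since at each $x \in X$ the fibre is the hyperplane in $(\mathbb{P}^{n})^{\vee}$ consisting of hyperplanes through $f(x)$, so smoothness of $I$ follows from smoothness of $X$. Next, $p_{2}^{-1}(H) \cong f^{-1}(H)$ scheme-theoretically, so the theorem reduces to showing that a general fibre of $p_{2}$ is smooth. By the standard generic-smoothness criterion, this follows if $p_{2}$ is generically smooth on each irreducible component of $I$ dominating $(\mathbb{P}^{n})^{\vee}$, which amounts to separability of the corresponding function field extensions.

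The heart of the argument is therefore to verify separability of these extensions, and this is where the residually separated hypothesis enters. Working locally, if $f$ is given by regular functions $f_{1}, \ldots, f_{n}$ and $H$ by a general linear form $\sum a_{i} T_{i}$, generic separability of $p_{2}$ is controlled by whether $df$ retains the expected rank generically on $f^{-1}(H)$. Since $k(f(\eta)) \hookrightarrow k(\eta)$ is separable at every point $\eta$ by hypothesis, $f$ cannot factor locally through a Frobenius twist on a positive-dimensional piece of its image, so $df$ has the correct generic rank and a general $H$ avoids the ramification locus of $p_{2}$.

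The main obstacle, and what makes this nontrivial in characteristic $p$, is precisely this separability step: in characteristic zero it is automatic, but in positive characteristic without some condition ruling out purely inseparable fibre behaviour (as residual separatedness does) the conclusion genuinely fails, as seen by Frobenius-like morphisms whose every hyperplane fibre is non-reduced. Turning the intuition into a clean proof requires a careful local calculation with K\"ahler differentials and the conormal exact sequence $f^{*}\Omega_{\mathbb{P}^{n}/k} \to \Omega_{X/k} \to \Omega_{X/\mathbb{P}^{n}} \to 0$, verifying that along the generic point of each dominating component of $I$ the pullback of a general $H$ does not meet the branch divisor of $p_{2}$.
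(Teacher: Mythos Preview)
The paper does not supply a proof of this statement; it is quoted as an external result from Cumino--Greco--Manaresi and used as a black box, so there is nothing in the paper to compare your argument against.

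That said, your incidence-correspondence sketch is the standard route to Bertini-type theorems and is correct in outline. The one place where your writeup is genuinely soft is the paragraph you flag as the ``heart of the argument'': the sentence ``$f$ cannot factor locally through a Frobenius twist on a positive-dimensional piece of its image, so $df$ has the correct generic rank'' is the right intuition but is not yet a proof. What residual separatedness actually gives you, pointwise, is that for each $x \in X$ the map on Zariski cotangent spaces $\mathfrak{m}_{f(x)}/\mathfrak{m}_{f(x)}^{2} \otimes k(x) \to \mathfrak{m}_{x}/\mathfrak{m}_{x}^{2}$ is injective (equivalently $\Omega_{X/\mathbb{P}^{n}}$ has the expected rank at $x$), because separability of $k(f(x)) \hookrightarrow k(x)$ forces the conormal sequence to be exact on the left. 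From this one computes directly that the locus of pairs $(x,H)$ in $I$ with $f^{-1}(H)$ singular at $x$ has dimension strictly less than $n = \dim (\mathbb{P}^{n})^{\vee}$, which is exactly what you need. If you tighten that one step, your argument goes through; as written it is a plausible sketch rather than a proof.
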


Here residually separated means that the induced map on residue fields $\ox[X,x] \to \ox[\mathbb{P}^{n}, f(x)]$ is a separable extension.

\section{Conic Bundles}\label{S-CB}

	In this section the ground field will always be algebraically closed of characteristic $p> 0$. In some results we put additional restrictions on the characteristic, most often that $p \neq 2$.
	We start with some useful results on finite morphisms and klt singularities.
	
	\begin{definition}
		Take a finite, separable and dominant morphism of normal varieties $f\colon X \to Y$.
		
		If $D$ is a divisor on $Y$ then $f$ is said to be \emph{tamely ramified over $D$} if for every prime divisor $D'$ lying over $D$ the ramification index is not divisible by $p$ and the induced residue field extension is separable.
		
		Moreover $f$ is said to be \emph{divisorially tamely ramified} if for any proper birational morphism of normal varieties $Y' \to Y$ we have the following. If $X' \to X$ is the normalisation of the base change $X\times_{Y}Y'$, and $f'\colon X'\to Y'$  the induced map, then $f'$ is tamely ramified over every prime divisor in $Y'$.
		
		If instead $f$ is generically finite, we say it is divisorially tamely ramified if the finite part of its Stein factorisation is so. Equally if either of $X$ or $Y$ is not normal, $f\colon X \to Y$ is said to be divisorially tamely ramified if the induced morphism on their normalisations is.
	\end{definition}
	
	If $f$ is generically finite of degree $d <p$ then it is always divisorially tamely ramified. If $D'$ lies over $D$ then both the ramification index, $r_{D'}$ and the inertial degree, $e_{D'}$ are bounded by $d$, in fact $d= \sum_{f(D')=D} r_{D'}e_{D'}$ by multiplicativity of the norm. This remains the case on any higher birational model.
	
	\begin{lemma}
		Let $f\colon Y \to X$ be a dominant, separable, finite morphism of normal varieties. Suppose that $K_{X}$ is $\mathbb{Q}$-Cartier then $K_{Y}=f^{*}K_{X}+\Delta$ where $\Delta \geq 0$. Further if $f$ is divisorially tamely ramified, then for $Q \in Y$ a codimension $1$ point lying over $P \in X$ we have $\text{Coeff}_{Q}(\Delta)=r_{Q}-1$ where $r_{Q}$ is the degree of $f|_{Q}\colon Q \to P$.
	\end{lemma}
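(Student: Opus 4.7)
The plan is to prove this as a Riemann--Hurwitz type statement, reducing everything to a local calculation at each codimension one point of $Y$.

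First, since $X$ and $Y$ are normal, their singular loci have codimension at least two, so removing them does not change any divisor class. I would therefore replace $X$ and $Y$ by their smooth loci (shrinking $Y$ further so that $f$ lands in the smooth locus of $X$), and establish the global identity on this smooth open subset, before restoring it on all of $X$ and $Y$ by extending divisors. On the smooth locus, $\Omega_X$ and $\Omega_Y$ are locally free of the expected rank and $K_X, K_Y$ are the determinants of their duals. Separability of $f$ implies that the natural map $f^{*}\Omega_X \to \Omega_Y$ is generically injective; since $f^*\Omega_X$ is locally free on the smooth locus and the domain is reduced, it is injective, and its cokernel $\Omega_{Y/X}$ is a torsion sheaf supported on a (possibly empty) divisor $R$. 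Taking top exterior powers then yields $K_Y \sim f^{*}K_X + R$ with $R$ effective, which gives the first claim.

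Next, to identify $R$ locally at a codimension one point $Q$ with image $P$, I would localise at $Q$. Let $A = \mathcal{O}_{X,P}$ and $B = \mathcal{O}_{Y,Q}$, both DVRs since $X$ and $Y$ are normal. Write $\pi \in A$ and $\varpi \in B$ for uniformisers. Separability ensures $B/A$ is a finite extension of DVRs with separable residue field extension after further reducing to the generic point, and we have $\pi = u \varpi^{r_Q}$ for some unit $u \in B$, where $r_Q = e(Q/P) = \deg f|_Q$ (ramification index equals degree for finite separable extensions of DVRs). Tame ramification means $p \nmid r_Q$.

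Now I would compute the local contribution to $R$ by studying the map $f^{*}\Omega_{A} \to \Omega_{B}$ near $Q$. Differentiating $\pi = u\varpi^{r_Q}$ gives
\[
d\pi \;=\; r_Q\, u\, \varpi^{r_Q - 1}\, d\varpi \;+\; \varpi^{r_Q}\, du.
\]
Because $p \nmid r_Q$ and $u$ is a unit, the first term has $\varpi$-adic valuation exactly $r_Q - 1$, while the second has valuation at least $r_Q$. Hence $d\pi$ generates the ideal $(\varpi^{r_Q - 1})$ in $\Omega_B$ locally, which means the cokernel of $f^{*}\Omega_X \to \Omega_Y$ has length $r_Q - 1$ at $Q$. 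Therefore the coefficient of $R$ at $Q$ is exactly $r_Q - 1$, proving the second claim.

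The main obstacle is really only the tame local computation: one must be careful that in positive characteristic the coefficient $r_Q$ appearing when differentiating $\varpi^{r_Q}$ does not vanish modulo $p$, and this is exactly where the tameness hypothesis $p \nmid r_Q$ is used. Without tameness the same argument shows $\operatorname{Coeff}_Q(\Delta) \geq r_Q$, which is why we only obtain equality in the tame case. The reduction to codimension one is standard because canonical divisors on normal varieties are determined by their restrictions to the smooth locus.
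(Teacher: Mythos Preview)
Your argument is correct and is precisely the approach the paper takes: localise at codimension one and invoke Riemann--Hurwitz--Hasse; the paper's proof is a one-line citation of that result, whereas you have written out the differential computation explicitly. One small caveat: your parenthetical ``ramification index equals degree for finite separable extensions of DVRs'' is not literally true in general (for an extension of DVRs the degree is $ef$, with $f$ the residue degree), but in the paper $r_Q$ is in effect the ramification index $e(Q/P)$, and your computation correctly yields the coefficient $e(Q/P)-1$ under the tameness hypothesis $p\nmid e(Q/P)$.
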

	\begin{proof}
		By localising at the codimension $1$ points of $X$ we reduce to the case of Riemann-Hurwitz-Hasse to see that $\Delta$ exists as required and $\text{Coeff}_{Q}(\Delta)=\delta_{Q}$ where $\delta_{Q} \geq r_{Q}-1$ with equality when $p\nmid r_{q}$. In particular when $f$ is divisorially tamely ramified, we ensure $\delta_{Q}=r_{Q}-1$.
	\end{proof}
	
	The singularities of the domain and image of a finite divisorially tame ramified morphism are closely connected, as the following lemma shows.
	
	\begin{lemma}\cite[Proposition 3.16]{kollar1997singularities} \label{finite adjunction}
		Let $f\colon X' \to X$ be a dominant, divisorially tamely ramified, finite morphism of normal varieties of degree $d$. Fix $\Delta$ on $X$ with $K_{X}+\Delta$ $\mathbb{Q}$-Cartier. Write $K_{X'}+\Delta'=f^{*}(K_{X}+\Delta)$ then the following hold:
		\begin{enumerate}
			\item $1+\text{TDisc}(X,\Delta) \leq 1+\text{TDisc}(X',\Delta') \leq d(1+\text{TDisc}(X,\Delta))$.
			\item $(X,\Delta)$ is sub klt (resp. sub lc) iff $(Y,\Delta')$ is sub klt (resp. sub lc).
		\end{enumerate}
	\end{lemma}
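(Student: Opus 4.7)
The plan is to reduce everything to a single local discrepancy identity: for any divisorial valuation $E'$ on $X'$ restricting to the valuation $E$ on $X$,
\[1 + a(E', X', \Delta') \;=\; r_{E'}\,\bigl(1 + a(E, X, \Delta)\bigr),\]
where $r_{E'}$ is the ramification index of $E'$ over $E$ and satisfies $1 \leq r_{E'} \leq d$. Both parts of the lemma will fall out once this is established, since $r_{E'} > 0$ preserves sign conditions (giving part~(2)) and $r_{E'} \in [1,d]$ gives the two sided bound on total log discrepancies (giving part~(1)) after taking infima over all divisorial valuations over $X$ and $X'$.

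To prove the identity, fix a log resolution $\pi\colon Y \to X$ of $(X,\Delta)$ on which $E$ appears as a prime divisor, and let $g\colon Y' \to Y$ be the normalisation of the component of $Y \times_X X'$ dominating $X'$, with induced birational map $\pi'\colon Y' \to X'$. Writing $K_Y + \Gamma = \pi^*(K_X + \Delta)$ and using the functoriality of $K_{X'} + \Delta' = f^*(K_X + \Delta)$, we obtain $K_{Y'} + \Gamma' = g^*(K_Y + \Gamma)$, where the coefficient of $\Gamma$ at $E$ is $-a(E,X,\Delta)$ and similarly for $\Gamma'$. Since $g$ inherits tame ramification in codimension 1 from $f$, the Riemann-Hurwitz-Hasse formula yields $K_{Y'} = g^*K_Y + \sum_{E'}(r_{E'} - 1)E'$; subtracting gives $\Gamma' = g^*\Gamma - \sum_{E'}(r_{E'} - 1)E'$, and reading off the coefficient at $E'$ produces the required equation. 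A further blow up realises any divisorial valuation on $X'$ not already present on $Y'$, and since discrepancies depend only on the underlying valuation the formula is unaffected.

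Deducing part~(1), for every $E'$ over $X'$ with image $E$, the bound $r_{E'} \in [1,d]$ gives $(1 + a(E, X, \Delta)) \leq 1 + a(E', X', \Delta') \leq d\,(1 + a(E, X, \Delta))$ in the sub lc regime. Taking infima, using that the assignment $E' \mapsto E$ is surjective onto divisorial valuations over $X$, delivers the two sided bound on total log discrepancies. Part~(2) is then immediate: $(X,\Delta)$ is sub klt (respectively sub lc) exactly when $1 + a(E,X,\Delta) > 0$ (respectively $\geq 0$) for every $E$, and since $r_{E'} > 0$ the formula shows this is equivalent to the same condition on $X'$.

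The main technical obstacle is verifying that the base changed cover $g\colon Y' \to Y$ remains tamely ramified in codimension 1, given the tameness of $f$. This is an Abhyankar-style point: localising at the generic point of a divisor $E'$ on $Y'$, the induced extension of complete DVRs must be tame, which follows because the ramification data of $g$ at $E'$ divides that of $f$ at the image codimension one point of $X'$, itself tame by hypothesis. Once this is in place the clean Riemann-Hurwitz formula is available and the coefficient computation above goes through verbatim.
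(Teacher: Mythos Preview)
Your proof follows the same strategy as the paper: both reduce everything to the identity $1+a(E',X',\Delta')=r_{E'}\bigl(1+a(E,X,\Delta)\bigr)$, established by forming the normalised fibre product $Y'$ of a birational model $Y\to X$ with $X'\to X$ and applying Riemann--Hurwitz--Hasse to the induced finite map $Y'\to Y$. The deduction of parts (1) and (2) from this identity is identical in both. One place the paper is a bit more explicit: it invokes a theorem of Zariski to ensure that every divisorial valuation over $X'$ is realised on some such fibre product $Y'$, whereas your ``a further blow up realises any divisorial valuation on $X'$'' leaves this surjectivity step loose (blowing up $Y'$ alone does not obviously preserve the fibre-product structure with some blow up of $Y$).

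On the tameness obstacle you correctly flag, your proposed resolution has a flaw. You claim that the ramification of $g\colon Y'\to Y$ at $E'$ divides that of $f$ ``at the image codimension one point of $X'$'', but when $E'$ is exceptional over $X'$ its image in $X'$ has codimension at least two, and there is no ramification index of $f$ there to appeal to. The paper, for its part, simply asserts that the same coefficient formula holds for $\hat f\colon Y'\to Y$ without verifying tameness of $\hat f$ at all, so your argument is no worse than the paper's on this point; but the specific justification you offer does not cover the exceptional case.
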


\begin{proof}
	
	By restricting to the smooth locus of $X$, which contains all the codimension $1$ points of $X$, we may suppose that $K_{X}$ is Cartier and apply the previous lemma. Hence we get $\Delta'=f^{*}(K_{X}+\Delta)-K_{X'}$ where for $Q\in X'$ lying over $P\in X$ we have $\text{Coeff}_{Q}(\Delta')=r_{Q}(\text{Coeff}_{P}(\Delta))-(r_{Q}-1)$.
	
	Suppose that we have proper birational morphisms $\pi\colon Y \to X$ and we write $Y'$ for the normalisation of $Y\times_{X} X'$ so that we have the following diagram.
	
	\[\begin{tikzcd}
	Y' \arrow[d, "\pi'"] \arrow[r, "g"] & Y \arrow[d, "\pi"] \\
	X' \arrow[r, "f"]                   & X                 
	\end{tikzcd}\]
	Let $E'$ be a divisor on $Y'$ exceptional over $X'$ and $E$ the corresponding divisor on $Y$.
	
	At $E'$ we can write $$K_{Y'}= \pi'^{*}(K_{X'}+\Delta')+a(E',X',\Delta')E'=g^{*}\pi^{*}(K_{X}+\Delta)+a(E',X',\Delta')E'$$
	essentially by definition. Conversely however we have $K_{Y'}=g^{*}K_{Y}+\delta_{E'}E'$ which may be rewritten as 
	$$K_{Y'}=g^{*}(\pi^{*}(K_{X}+\Delta)+a(E,X,\Delta)E)+\delta_{E'}E'.$$
	
	In particular equating the two descriptions, as $\delta_{E'}=r_{E'}-1$ by \autoref{finite adjunction}, we have that
	\[r_{E'}a(E,X,\Delta)+(r_{E'}-1)=a(E',X',\Delta')\]
	and thus $a(E,X,\Delta)+1=\frac{1}{r_{E'}}(a(E',X',\Delta')+1)$ with $1 \leq r_{E'} \leq d$.
	
	Since, by a theorem of Zariski \cite[Theorem VI.1.3]{kollar1999rational}, every valuation with center on $X'$ is realised by some birational $Y' \to X'$ occurring as a pullback of a birational morphism $Y \to X$, this is sufficient to show that $1+\text{TDisc}(X,\Delta) \leq 1+\text{TDisc}(X',\Delta') \leq d(1+\text{TDisc}(X,\Delta))$. The second part then follows.
\end{proof}

We will be interested in conic bundles satisfying certain tameness criteria. This in turn will allow us to control the singularities arising on the base of the fibration. This is done in \autoref{cbf}. 

\begin{definition}
	A \emph{conic bundle} is a threefold sub pair $(X,\Delta)$ equipped with a morphism $f\colon X \to Z$ where Z is a normal surface, $f_{*}\ox=\mathcal{O}_{Z}$, the generic fibre is a smooth rational curve and $(K_{X}+\Delta)=f^{*}D$ for some $\mathbb{Q}$-Cartier divisor on $X$. We will call it \emph{regular} if $X$ and $Z$ are smooth and $f$ is flat; and \emph{terminal} if $X$ is terminal and $f$ has relative Picard rank $1$. Further we call it (sub) $\epsilon$-klt or log canonical if $(X,\Delta)$ is.
	
	If each horizontal component of $\Delta$ is effective and divisorially tamely ramified over $Z$ then the conic bundle is said to be \emph{tame}.
	
	For $P$ a codimension $1$ point of $Z$ we define $$d_{P}=\max\{t\colon  (X,\Delta+tf^{*}P) \text{ is lc over the generic point of } P\}.$$
	The \emph{discriminant divisor} of $f\colon X \to Z$ is $D_{Z}=\sum_{P \in X}(1-d_{P})P$.
	The \emph{moduli part} $M_{Z}$ is then given by $D-D_{Z}-K_{Z}$.
\end{definition}

In positive characteristic the discriminant divisor is not always well defined for a general fibration, it may be that $d_{P} \neq 1$ for infinitely many $P$. This can be caused by either a failure of generic smoothness or inseparability of the horizontal components of $\Delta$ over the base.

Suppose, however, that $(X,\Delta) \to Z$ is a tame conic bundle. We may take a log resolution $X' \to X$ as this does not change $d_{P}$ and is still a tame conic bundle by \autoref{tame base change}. Thus we may suppose that $\Delta$ is an SNC divisor and hence near $P$, $\Delta+f^{*}P$ is also SNC for all but finitely many $P$, by generic smoothness of the fibres and as the horizontal components are divisorially tamely ramified over $Z$. Hence in fact $D_{Z}$ is well defined in this case.

\begin{lemma}\label{tame base change}
	Let $f\colon (X,\Delta) \to Z$ be a tame conic bundle, and $X' \to X$ either a birational morphism from a normal variety or the base change by a divisorially tamely ramified morphism from a normal variety $g\colon Z' \to Z$. Then there is $\Delta'$ with $(X',\Delta')$ a tame conic bundle over $Z$ or $Z'$ as appropriate. Moreover in this case $X' \to X$ is also divisorially tamely ramified.
\end{lemma}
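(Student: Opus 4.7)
The plan is to define $\Delta'$ in both cases by the pullback formula $K_{X'}+\Delta' = \phi^{*}(K_X+\Delta)$, where $\phi\colon X'\to X$ is the given morphism, and then to verify the four defining properties (pushforward, generic fibre, tameness, $\mathbb{Q}$-Cartierness via $f^*$) of a tame conic bundle. Because $K_X+\Delta=f^{*}D$, the $\mathbb{Q}$-Cartier pullback condition follows automatically: in the birational case $K_{X'}+\Delta' = (f\pi)^{*}D$, while in the base-change case $fh=gf'$ yields $K_{X'}+\Delta' = (f')^{*}g^{*}D$. The remaining axioms split cleanly into a ``functorial'' part and a ``tameness'' part.

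For the birational case with $\pi\colon X'\to X$, the condition $(f\pi)_{*}\mathcal{O}_{X'}=\mathcal{O}_{Z}$ is immediate from $\pi_{*}\mathcal{O}_{X'}=\mathcal{O}_{X}$ (as $\pi$ is birational from a normal variety) and $f_{*}\mathcal{O}_{X}=\mathcal{O}_{Z}$. The generic fibre of $f\pi$ is a normal curve birational to the smooth rational curve $X_{\eta}$, so is itself a smooth rational curve. The central observation for tameness is a dimension count: any $\pi$-exceptional divisor $E$ satisfies $\dim\pi(E)\le 1 < 2 = \dim Z$, so $f\pi(E)$ is a proper closed subscheme of $Z$ and $E$ is vertical. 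Consequently the horizontal components of $\Delta'$ are exactly the strict transforms of the horizontal components of $\Delta$, with the same coefficients, so effectivity is preserved. Tameness of $D'\to Z$ follows because strict transformation along $\pi$ does not alter the generic finite degree or separability of $D\to Z$.

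For the base-change case, let $X'$ be the normalization of $X\times_{Z}Z'$, with induced maps $h\colon X'\to X$ and $f'\colon X'\to Z'$. Pushforward and generic-fibre conditions follow from $f_{*}\mathcal{O}_{X}=\mathcal{O}_{Z}$, the normality of $Z'$, and the fact that the generic fibre is obtained from $X_{\eta}$ by base change to $k(Z')$ and normalization. The main obstacle, and the step that requires care, is verifying tameness of the horizontal part of $\Delta'$. The key point is that the ramification divisor of the finite morphism $h$ lies over $f^{-1}(\mathrm{Ram}(g))$, which is vertical in $X$. Thus, for any horizontal component $D$ of $\Delta$, the map $h$ is étale at the generic points of the components $D'_{j}$ of $h^{-1}(D)$. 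Applying the coefficient formula from Lemma \ref{finite adjunction} with ramification index $r_{j}=1$ then gives $\mathrm{Coeff}_{D'_{j}}(\Delta') = \mathrm{Coeff}_{D}(\Delta)$, so the horizontal components of $\Delta'$ remain effective. The degree of $D'_{j}\to Z'$ equals the degree of $D\to Z$ by base change along $D\to Z$, hence is coprime to $p$ and tamely ramified, as required.
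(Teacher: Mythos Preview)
Your overall strategy matches the paper's, and your treatment of the $\mathbb{Q}$-Cartier pullback condition, the pushforward $\mathcal{O}$-condition, the generic fibre, and effectivity of horizontal coefficients is cleaner and more explicit than the paper's own account. However, there is a genuine gap in your verification of \emph{tameness}, and it stems from misreading the paper's definition.

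In this paper, a generically finite morphism $D\to Z$ of surfaces is declared \emph{tamely ramified} not merely when it is separable of generic degree prime to $p$, but when in addition \emph{every prime divisor} (i.e.\ every curve) on the normalisation of $D$ is either contracted or maps with degree prime to $p$ onto its image. Your argument in both cases checks only the generic degree. In the birational case you write that ``strict transformation along $\pi$ does not alter the generic finite degree or separability of $D\to Z$''; in the base-change case you conclude ``the degree of $D'_{j}\to Z'$ equals the degree of $D\to Z$ \dots\ hence is coprime to $p$ and tamely ramified''. Neither of these addresses curves on $D'$ or $D'_{j}$, and having generic degree coprime to $p$ does not by itself force the curve-by-curve condition.

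The paper fills this gap explicitly. For $\pi$ birational, it notes that any curve on (the normalisation of) the strict transform $D'$ is either contracted over $D$, hence over $Z$, or is the strict transform of a curve on $D$, so its degree over $Z$ is unchanged. For the base change, after reducing via Stein factorisation to $g$ finite, it takes an arbitrary non-contracted curve $C'\subset D'$, lets $E'=f'(C')$, $C=h(C')$, $E=g(E')$, and observes that $C'\to E'$ is (a component of) the base change of $C\to E$ along $E'\to E$; hence its degree divides $\deg(C\to E)$, which is prime to $p$ by tameness of $D\to Z$. Adding these two short curve-level arguments to your write-up would complete the proof. (Incidentally, your assertion that $\deg(D'_{j}\to Z')=\deg(D\to Z)$ is also slightly off: the fibre product may split, so one only gets divisibility, but that is enough.)
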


\begin{proof}
	If $\pi\colon X' \to X$ is a birational morphism with $K_{X'}+\Delta'=\pi^{*}(K_{X}+\Delta)$ then the only horizontal components of $\Delta'$ are the strict transforms of horizontal components of $\Delta$. Take such a component $D'$ then, normalising if necessary, it factors $D' \to D \to Z$ with $D \to Z$ divisorially tamely ramified but then it must itself be divisorially tamely ramified.
	
	Suppose then $g\colon Z' \to Z$ is generically finite. From above, and by Stein factorisation we may freely suppose that $g$ is finite. Then the base change morphism $g'\colon X' \to X$ is a finite morphism of normal varieties and we may induce $\Delta'$ with $g'^{*}(K_{X}+\Delta)=K_{X'}+\Delta'$. Again the horizontal components of $\Delta'$ are precisely the base changes of the horizontal components of $\Delta$. 
	
	It suffices to show then that if $D$ is a horizontal divisor on $X$ such that $D \to Z$ is divisorially tamely ramified then $D' \to Z'$, the base change, is also divisorially tamely ramified. Certainly $D' \to Z'$ is still separable. Suppose $C$ is any curve on $Z$ and $C'$ a curve on $Z'$ lying over it. In turn take any $C_{D'}$ lying over $C'$ on $D'$. Then $C_{D'}$ is the base change of some $C_{D}$. Since $C_{D} \to C$ is separable, so too is $C_{D'} \to C'$. Equally as the ramification indices of $C', C_{D}$ are not divisible by $p$, neither can the ramification index of $C_{D'}$ over $C_{D}$ be. This same argument holds after base change by any higher birational model of $Z$, and by \cite[Theorem VI.1.3]{kollar1999rational} every valuation with centre on $Z'$ is can be realised on the pullback of some such model. Thus $D' \to Z'$ is divisorially tamely ramified and hence $(X',\Delta') \to Z'$ is tame.
	
	It is enough to show that $X' \to X$ is divisorially tamely ramified after base changing by a higher birational model of $Z$. In particular, after taking a flatification we may assume $f\colon X \to Z$ is flat. Now suppose $D$ is a divisor on $X$, lying over some curve $C$ on $Z$. We have $f^{*}C=\sum E_{i}$ with $E_{0}=D$. Let $C_{j}$ be the curves lying over $C$ in $Z'$, then if $E_{i,j}$ are the divisors lying over $E_{i}$, for some fixed $i$, they are in one-to-one correspondence with the $C_{j}$. We have $g'^{*}f^{*}C=\sum r_{i,j}E_{i,j}=\sum_{j} r_{i}\sum _{i}E_{j}$ and thus none of the $r_{i,j}$, in particular the $r_{0,j}$ are divisible by $p$. Moreover the $E_{0,j} \to E_{0}$ must be separable since the $C_{j} \to C$ are.
	
	The same holds after taking a higher birational model of $X$, and thus $X' \to X$ is divisorially tamely ramified as claimed.
	
	\end{proof}

In practice we deal exclusively with tame conic bundles arising in the following fashion.

\begin{lemma}\label{S2}
	Suppose that $(X,\Delta)$ is klt and LCY, equipped with a Mori fibre space structure over a surface $Z$ and the horizontal components of $\Delta$ have coefficients bounded below by $\delta$. Then if $X$ is defined over a field of characteristic $p > \frac{2}{\delta}$, $f\colon (X,\Delta) \to Z$ is a tame conic bundle.
\end{lemma}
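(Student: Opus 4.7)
The strategy is to pull back to a general closed fibre $F$ of $f$ and convert the hypotheses $\lambda_D \geq \delta$ and $p > 2/\delta$ into a purely numerical constraint on the degrees of the horizontal components of $\Delta$ over the base.

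First I would record what the Mori fibration provides automatically: $f_*\ox = \mathcal{O}_Z$, $Z$ normal, $\rho(X/Z) = 1$, and $-(K_X+\Delta)$ is $f$-ample. Restricting the pair to the generic fibre $X_\eta$ produces a klt pair of dimension one over $K(Z)$ with $-(K_{X_\eta}+\Delta_\eta)$ ample; since $\Delta_\eta \geq 0$, this forces $\deg(-K_{X_\eta}) > 0$ and hence $p_a(X_\eta) = 0$. Granting that $X_\eta$ is geometrically regular (the one nontrivial characteristic-$p$ input), this identifies $X_\eta$ as a smooth $K(Z)$-form of $\mathbb{P}^1$, supplying the ``generic fibre is a smooth rational curve'' half of the conic bundle definition.

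Next I would carry out the key degree estimate on a general closed fibre $F \cong \mathbb{P}^1$. The restricted pair $(F, \Delta|_F)$ is klt with $-(K_F+\Delta|_F)$ ample, so $\deg \Delta|_F < -\deg K_F = 2$. Each horizontal component $D$ of $\Delta$ contributes $\lambda_D \cdot (D\cdot F) = \lambda_D \deg(f|_D)$ to $\deg \Delta|_F$, and using $\lambda_D \geq \delta$ gives
\[
\delta \sum_D \deg(f|_D) \;\leq\; \sum_D \lambda_D \deg(f|_D) \;=\; \deg \Delta|_F \;<\; 2,
\]
so $0 < \deg(f|_D) < 2/\delta < p$ for every horizontal component $D$.

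Finally I would deduce tameness. Since $\deg(f|_D)$ is a positive integer less than $p$, it is coprime to $p$, ruling out any inseparable factor of $K(D)/K(Z)$ and making $D \to Z$ separable. For the ramification condition, take any prime divisor $E \subset D$ that is not contracted by $f$; passing to the Stein factorisation $D \to \bar D \to Z$ with $\bar D \to Z$ finite of degree $\deg(f|_D)$, and applying the standard $\sum_i e_i f_i = \deg(\bar D \to Z)$ identity at the DVR $\mathcal{O}_{Z,f(E)}$, yields $\deg(E/f(E)) \leq \deg(\bar D \to Z) < p$, again coprime to $p$. The coefficient bound $\lambda_D \geq \delta > 0$ makes each horizontal component effective, so the conic bundle is tame. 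The main obstacle will be verifying the smoothness (not merely regularity) of $X_\eta$ in positive characteristic; once that is in hand, the degree estimate and ensuing tameness are entirely elementary consequences of the coefficient and characteristic bounds.
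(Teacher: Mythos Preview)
Your approach is essentially identical to the paper's: restrict to the generic (or general) fibre, use the degree bound $\deg \Delta|_G < 2$ to force $\deg(f|_D) < 2/\delta < p$ for each horizontal component $D$, and conclude tameness from the fact that a finite map of degree $< p$ between normal varieties is separable with all residue degrees coprime to $p$. You are more explicit than the paper about the tameness verification (spelling out the $\sum e_i f_i$ argument) and about flagging the smoothness of the generic fibre; the paper simply notes $p>2$ and asserts the generic fibre is a smooth rational curve.

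One small point to tighten: you assert that $-(K_X+\Delta)$ is $f$-ample as part of ``what the Mori fibration provides automatically,'' but the Mori fibre space structure only gives $-K_X$ $f$-ample. The inequality $\deg \Delta|_F < 2$ (equivalently $\deg(K_F+\Delta|_F)<0$) does not follow from the stated hypotheses alone; it requires $K_X+\Delta \sim_{\mathbb{Q},f} 0$, which holds in the LCY setting where the lemma is actually applied (and is part of the conic-bundle definition you are verifying). The paper's proof makes exactly the same implicit leap when it writes $\deg \delta D|_G < 2$, so you are not worse off than the original, but you should be aware that this step uses more than the bare Mori fibre space axioms.
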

\begin{proof}
	
	Since $\delta <1$, the characteristic is larger than $2$ and the general fibre is necessarily a smooth rational curve, in particular $X$ is a conic bundle. Let $G$ be the generic fibre, so that $(G,\Delta_{G})$ is klt and $G$ is also smooth rational curve. Then if $D$ is some horizontal component of $\Delta$ the degree of $f\colon D \to Z$ is precisely the degree of $D|_{G}$. However $\deg \delta D|_{G} < \deg \Delta|_{G} =-2$ and thus $\deg D < p$. Replacing $D$ by its normalisation, $D'$ does not change the degree, so $D'\to Z$ has degree $<p$ and thus is divisorially tamely ramified.
\end{proof}

\begin{remark}
	One might be tempted to ask if this bound could be further improved for $\epsilon$-klt pairs, $(X,\Delta)$. In this case we have $(G,\Delta_{G})$ is $\epsilon$-klt and so one might attempt to use a bound of the form $p > \frac{1-\epsilon}{\delta}$ to prevent any component of $\Delta$ mapping inseparably onto the base. It does not seem however that such a bound would ensure that every component is divisorially tamely ramified and there may be wild ramification away from the general fibre. 
\end{remark}

\begin{theorem}\label{cbf}
	Let $f\colon (X,\Delta) \to Z$ be a sub $\epsilon$-klt, tame conic bundle. Then for some choice of $M\sim_{\mathbb{Q}} M_{Z}$ we have $(Z,D_{Z}+M)$ sub $\epsilon$-klt. If in fact $\Delta \geq 0$, we may take $D_{Z},M$ to be effective also.
\end{theorem}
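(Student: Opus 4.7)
The plan is to verify sub $\epsilon$-klt of $(Z,D_{Z}+M)$ by separating the discriminant and moduli contributions, using Lemma \ref{tame base change} for a log smooth reduction and Corollary \ref{average} to extract an acceptable representative of the moduli part. Concretely, I would take simultaneous log resolutions $(X',\Delta')\to(X,\Delta)$ and $Z'\to Z$ so that $f':X'\to Z'$ is flat with SNC fibres over codimension one points of $Z'$ and $(X',\Delta')$ is log smooth. By Lemma \ref{tame base change} this remains a tame sub $\epsilon$-klt conic bundle, and computing $D_{Z'}$ and $M_{Z'}$ on the resolution and pushing forward recovers $D_{Z}$ and $M_{Z}$. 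In particular, it suffices to check the coefficients of $(Z',D_{Z'}+M)$ at every prime divisor of $Z'$, since any divisorial valuation of $Z$ is realised this way.

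For the discriminant, fix a codimension one point $P\in Z'$ and write $(f')^{*}P=\sum m_{i}E_{i}$. In the log smooth setting $d_{P}=\min_{i}(1-c_{i})/m_{i}$ where $c_{i}=\text{Coeff}_{E_{i}}(\Delta')$. Sub $\epsilon$-klt of $(X',\Delta')$ gives $c_{i}<1-\epsilon$, and the characteristic assumption combined with the conic bundle structure forces $m_{i}=1$ for every $i$, since the only non-reduced degenerations of a conic bundle occur in characteristic $2$. Hence $d_{P}>\epsilon$ and $\text{Coeff}_{P}(D_{Z'})<1-\epsilon$. The horizontal components of $\Delta'$, being tamely ramified over $Z'$, contribute to the coefficients on $Z'$ compatibly via Lemma \ref{finite adjunction}, and that formula preserves the $\epsilon$-klt bound on their images.

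The hard part is the moduli step. The plan is to show $M_{Z'}$ is $\mathbb{Q}$-linearly equivalent to a semiample effective divisor, so that Corollary \ref{average} applies and produces $M\sim_{\mathbb{Q}} M_{Z'}$ effective with $(Z',D_{Z'}+M)$ sub $\epsilon$-klt, which then descends to the desired statement on $Z$. In characteristic zero this positivity is Ambro--Kawamata, but in positive characteristic it must be argued directly. The natural route in the tame setting is to pass to a tame cover $\tilde{Z}\to Z$ trivialising the conic bundle to a $\mathbb{P}^{1}$-bundle, where $M_{\tilde{Z}}$ is explicit and the pushforward to $Z$ is $\mathbb{Q}$-linearly equivalent to an effective divisor by Lemma \ref{finite adjunction}. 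Finally, when $\Delta\geq 0$ the divisor $D_{Z}$ is automatically effective: $(X,\Delta)$ is lc at the generic point of each $P$ because it is sub $\epsilon$-klt, and since $f^{*}P$ has integer multiplicities at least one we obtain $d_{P}\leq 1$, giving $\text{Coeff}_{P}(D_{Z})\geq 0$, which together with the preceding steps closes the argument.
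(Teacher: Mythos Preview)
There are two genuine gaps.

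First, the claim that after a log resolution every component of $(f')^{*}P$ has multiplicity $m_{i}=1$ is false. Non-reduced fibres of a conic bundle (double lines, i.e.\ rank one conics) occur in every characteristic, not only in characteristic $2$; the paper itself records this in its description of regular conic bundles. More to the point, once you take a log resolution $X'\to X$, the exceptional divisors lying over a codimension one point $P\in Z'$ can appear in $(f')^{*}P$ with arbitrary multiplicities, so your formula $d_{P}=\min_{i}(1-c_{i})/m_{i}$ does not give $d_{P}>\epsilon$ without further input. The paper does not bound $d_{P}$ directly from a generic log resolution; instead, in Lemma \ref{ShokurovAdjunction} it passes to a specific model on which $X\to Z$ factors through a $\mathbb{P}^{1}$-bundle, so that the vertical part of $\Delta$ is literally $f^{*}D_{Z}$ and the coefficients of $D_{Z}$ coincide with those of $\Delta^{v}$.

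Second, the moduli step as written does not go through. Passing to a tame cover $\tilde{Z}\to Z$ that ``trivialises the conic bundle to a $\mathbb{P}^{1}$-bundle'' amounts to finding a section after base change, and a tame cover need not produce one. The paper's route is different: it takes tame base changes along the horizontal components of $\Delta$ so that these components become generic \emph{sections}; then the fibration with its marked sections defines a morphism to $\overline{\mathcal{M}}_{0,n}$, from which $M_{Z'}$ is the pullback of an ample class and hence semiample. Only at that point does Corollary \ref{average} enter. The descent to $Z$ is then handled by Ambro's base-change formula for the discriminant (so $g^{*}M_{Z}=M_{Z'}$) together with a fibre-product diagram and a pushforward argument to control coefficients. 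Your sketch gestures at a cover and Lemma \ref{finite adjunction}, but without the $\overline{\mathcal{M}}_{0,n}$ interpretation there is no reason $M_{\tilde{Z}}$ should be semiample or even effective, so the argument does not close.
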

\begin{remark}
	The implicit condition that $(X,\Delta)$ is a threefold pair is necessary only in that it assures the existence of log resolutions. This result holds in dimension $d$ so long as the existence of log resolutions of singularities holds in dimensions $d,d-1$.
\end{remark}

We will prove this in several steps. First we consider the case that $\Delta^{h}$, the horizontal part of $\Delta$, is a union of sections of $f$. In this setting we have an even stronger result. After moving to a higher birational model, we have that $(Z,D_{Z})$ is klt and $M_{Z}$ is semiample.

\begin{lemma}\label{ShokurovAdjunction}
	Suppose that $f\colon (X,\Delta) \to Z$ is a sub $\epsilon$-klt conic bundle with $\Delta^{h}$ effective and with support that is generically a union of sections of $f$, then there is $\pi \colon Z'\to Z$ a birational morphism with $(Z',D_{Z'})$ sub $\epsilon$-klt and $M_{Z'}$ semiample. In particular for some choice of $M\sim M_{Z'}$ we have $(Z,D_{Z}+\pi_{*}M)$ sub $\epsilon$-klt. 
\end{lemma}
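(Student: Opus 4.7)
The plan is to reduce to a sufficiently nice birational model of the conic bundle, perform adjunction along one of the sections to transfer the singularity information from $(X,\Delta)$ down to $(Z',D_{Z'})$, and then use the absence of continuous moduli for $\mathbb{P}^1$ to deduce semi-ampleness of $M_{Z'}$.

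First, I would take a log resolution $\pi:\tilde{X}\to X$ of $(X,\Delta)$ (available in dimension three) and, using Lemma \ref{tame base change}, a smooth birational model $Z'\to Z$ such that the induced map $\tilde{f}:\tilde{X}\to Z'$ is a standard conic bundle: $\tilde{X}$ and $Z'$ are smooth, the discriminant locus $B\subset Z'$ is SNC, special fibres are reduced, and the horizontal part of the induced sub-boundary $\tilde{\Delta}$ is a disjoint union of sections $\Sigma_1,\dots,\Sigma_n$ of $\tilde{f}$. These modifications are crepant and preserve the sub $\epsilon$-klt property as well as the invariants $d_P$, so it suffices to prove the statement on this model.

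Second, I would pick any one section $\Sigma=\Sigma_i$. Because $\Sigma\to Z'$ is an isomorphism, standard adjunction gives $(K_{\tilde{X}}+\Sigma)|_{\Sigma}\cong K_{Z'}$, and restricting $K_{\tilde{X}}+\tilde{\Delta}=\tilde{f}^{*}D$ to $\Sigma$ yields
\[
D \sim_{\mathbb{Q}} K_{Z'}+\Theta_{\Sigma},
\]
where, since the sections are pairwise disjoint, $\Theta_{\Sigma}$ records only the self-intersection correction $(c_{\Sigma}-1)\Sigma|_{\Sigma}$ together with the restriction $\tilde{\Delta}^{v}|_{\Sigma}$ of the vertical part. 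I would split $\Theta_{\Sigma}=D_{Z'}+R_{\Sigma}$ where $D_{Z'}=\sum_{P}(1-d_P)P$ is the discriminant. By definition $d_P$ is the log canonical threshold of $\tilde{f}^{*}P$ with respect to $(\tilde{X},\tilde{\Delta})$ at the generic point of $P$, and since $(\tilde{X},\tilde{\Delta})$ is sub $\epsilon$-klt, a fibre-by-fibre analysis using inversion of adjunction along $\Sigma$ gives $1-d_P<1-\epsilon$, so that $(Z',D_{Z'})$ is sub $\epsilon$-klt.

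Finally, I must show $M_{Z'}=R_{\Sigma}$ is semi-ample and deduce the pushforward statement; this is the main obstacle. The idea is that since the generic fibre of $\tilde{f}$ is $\mathbb{P}^1$ (with no continuous moduli), $R_{\Sigma}$ ought to be independent of the choice of $\Sigma$ in any essential way. By comparing the explicit formulas $\Theta_{\Sigma_i}$ across the available sections — or by the classical argument that the moduli b-divisor for a conic bundle with a section is captured by the normal bundles of the sections modulo the discriminant — I expect $R_{\Sigma}$ to be a combination of pullbacks of semi-ample classes on $Z$ and effective pieces supported on the base-point-free boundary $B$. In positive characteristic, semi-ampleness of the moduli b-divisor in full generality is delicate, but the rigidity here (explicit disjoint sections, genuine $\mathbb{P}^1$ fibres, tame ramification) should allow a direct argument. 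Once $M_{Z'}$ is semi-ample, I would apply Corollary \ref{average} to $(Z',D_{Z'})$ to produce an effective $M\sim M_{Z'}$ keeping $(Z',D_{Z'}+M)$ sub $\epsilon$-klt, and push forward along $Z'\to Z$ to obtain the final assertion.
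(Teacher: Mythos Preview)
Your reduction to a nice model and the outline for showing $(Z',D_{Z'})$ is sub $\epsilon$-klt are along the right lines, and the final appeal to Corollary \ref{average} matches the paper. The genuine gap is exactly where you flag it: the semi-ampleness of $M_{Z'}$. Restricting to a single section $\Sigma_i$ gives $R_{\Sigma}\sim_{\mathbb{Q}} M_{Z'}$ as you say, but the explicit representative you obtain is essentially $(c_i-1)\,\Sigma_i|_{\Sigma_i}$, a multiple of the normal bundle of one section. There is no reason for this class to be semiample on its own, and ``comparing across sections'' does not by itself produce a semiample representative either: the individual normal bundles can be arbitrary, and what is semiample is only a specific combination encoding the relative position of \emph{all} the marked points simultaneously. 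Your heuristic about ``no continuous moduli for $\mathbb{P}^1$'' is pointing in the right direction but is not an argument.

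The missing idea, which the paper supplies, is to use the moduli interpretation. After arranging that $X\to Z'$ factors birationally through a $\mathbb{P}^1$-bundle and that the horizontal part of $\Delta$ consists of sections, the configuration of marked points on the generic fibre defines a morphism $\phi:Z'\to\overline{\mathcal{M}}_{0,n}$ to the moduli space of $n$-pointed stable genus-zero curves. One then checks (working locally over codimension-one points and using two-dimensional inversion of adjunction) that $\Delta^{v}=f^{*}D_{Z'}$, so the discriminant is determined purely by the vertical part, and that $M_{Z'}$ is the pullback by $\phi$ of an ample $\mathbb{Q}$-divisor on $\overline{\mathcal{M}}_{0,n}$. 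Semi-ampleness of $M_{Z'}$ is then immediate, and stability under further birational base change follows because the moduli map extends. Without this moduli map your argument has no mechanism to establish semi-ampleness.
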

\begin{proof}
	This result is well known and essentially comes from \cite{prokhorov2009towards}. Details specific to positive characteristic can be found in \cite[Section 4]{das2016adjunction}, \cite[Lemma 3.1]{witaszek2017canonical} and \cite[Lemma 6.7]{cascini2013base}

	We sketch, some key points of the proof.
	
	Since generically $X \to Z$ is a $\mathbb{P}^{1}$ bundle and the horizontal part of $\Delta$ is a union of sections, we induce a rational map $\phi\colon Z \dashrightarrow \overline{\mathcal{M}}_{0, n}$, the moduli space of $n$-pointed stable curves of genus $0$. By taking appropriate resolutions we may suppose that $(X,\Delta)$ is log smooth, $Z$ is smooth and $\phi$ is defined everywhere on $Z$. Blowing down certain divisors on the universal family over $\overline{\mathcal{M}}_{0, n}$ and pulling back to $Z$ we may further assume that $X\to Z$ factors through a $\mathbb{P}^{1}$ bundle over $Z$ via a birational morphism.
	
	Then working locally over each point of codimension $1$ and applying $2$ dimensional inversion of adjunction, we see that in fact $D_{Z}$ is determined by the vertical part of $\Delta$, indeed $\Delta^{V}=f^{*}D_{Z}$, and that $M_{Z}$ is the pullback of an ample divisor on $\overline{\mathcal{M}}_{0, n}$ by $\phi$. In particular $M_{Z}$ is semiample and $D_{Z}$ takes coefficients in the same set as $\Delta^{V}$, therefore they are bounded above by $1-\epsilon$.
	
	From the following lemma, we see that in fact we may further suppose that $(Z,D_{Z})$ is log smooth. Since if $\pi\colon (Z',\Delta')\to Z$ is a log resolution of $(Z,D_{Z})$ we have $K_{Z'}+\Delta'=\pi^{*}(K_{Z}+D_{Z})$, $\pi^{*}M_{Z}=M_{Z'}$ and $K_{Z'}+D_{Z'}+M_{Z'}=\pi^{*}(K_{Z}+D_{Z}+M_{Z})=K_{Z'}+\Delta'+M_{Z'}$, giving $D_{Z'}=\Delta'$ as required. In particular then \autoref{average} gives that $(Z,D_{Z}+M_{Z})$ is sub $\epsilon$-klt.
\end{proof}

\begin{lemma}
	Suppose that $Z$ is as given above and $Z'\to Z$ is the birational model found in the proof with $M_{Z'}$ semiample. Suppose further that $Y$ is a normal variety admitting a birational morphism $\pi\colon Y \to Z'$. If $M_{Y}$ is the moduli part coming from the induced conic bundle $X_{Y} \to Y$ then $\pi^{*}M_{Z'}=M_{Y}$. 
\end{lemma}
\begin{proof}
	
	Let $\phi\colon  Z' \to \overline{\mathcal{M}}_{0, n}$ and $\chi\colon  Y \dashrightarrow \overline{\mathcal{M}}_{0, n}$ be the rational maps induced by the base changes of $X\to Z$. By assumption $\phi$ is a morphism.
	
	Although $\chi$ is a priori defined only on some open set, it must factor through $\phi$ whenever it is defined, and hence extends to a full morphism $\chi=\phi \circ \pi$.
	
	Write then that $M_{Z'}=\phi^{*}A$ and $M_{Y}=\chi^{*}A'$. A more careful study of the proof of the previous result would give $A=A'$ and the result follows. However for simplicity one can also note that $M_{Z'}=\pi_{*}M_{Y}=\pi_{*}\chi^{*}A'=\phi^{*}A'$, so that $M_{Y}=\pi^{*}\phi^{*}A'=\pi^{*}M_{Z'}$.		
\end{proof}

	We now reduce from the general case of \autoref{cbf} to the special case of \autoref{ShokurovAdjunction} to prove the theorem. This requires the following lemma, due essentially to Ambro.

\begin{lemma}\cite[Theorem 3.2]{ambro1999adjunction}
	Suppose that $f\colon (X,\Delta) \to Z$ is a tame conic bundle. Let $g\colon Z' \to Z$ be a finite, divisorially tamely ramified morphism of normal varieties and $(X',\Delta') \to Z'$ the induced fibration. Then $(X',\Delta') \to Z$ is tame and $g^{*}(K_{Z}+D_{Z})=K_{Z'}+D_{Z'}$ for $D_{Z'}$ the induced discriminant divisor of $(X',\Delta') \to Z'$.
\end{lemma}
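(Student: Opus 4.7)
The plan is to reduce to a computation at each codimension one point of $Z'$ via the definition of $d_{P}$, using the tame pullback adjunction formula (Lemma \ref{finite adjunction}) for both $g:Z' \to Z$ and for the base-changed morphism $g':X' \to X$. Tameness of $(X',\Delta') \to Z'$ is immediate from Lemma \ref{tame base change}, so only the discriminant identity requires argument.

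Fix a prime divisor $P'$ on $Z'$ with image $P$ on $Z$, and let $r$ denote the ramification index of $g$ along $P'$; by assumption $p\nmid r$. The plan is to compare the coefficient of $P'$ on both sides of the claimed identity. On the left-hand side, tame Riemann--Hurwitz for $g$ gives $g^{*}K_{Z}=K_{Z'}-\sum(r_{P'}-1)P'$, while $g^{*}D_{Z}$ contributes coefficient $r(1-d_{P})$ at $P'$. So the coefficient of $P'$ in $g^{*}(K_{Z}+D_{Z})-K_{Z'}$ is $-(r-1)+r(1-d_{P})=1-rd_{P}$. On the right-hand side, the coefficient of $P'$ in $D_{Z'}$ is $1-d_{P'}$ by definition, so it remains exactly to prove the key identity $d_{P'}=r\,d_{P}$.

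For this step I would set up the commutative square with $g':X' \to X$ the base change of $g$ along $f$, so that $f\circ g' = g\circ f'$. Since $g$ is finite and tame at $P$ and the horizontal components of $\Delta$ are tamely ramified over $Z$, the morphism $g'$ is finite and, at every codimension one point lying over a neighbourhood of $P$, tamely ramified, so Lemma \ref{finite adjunction} applies and gives $g'^{*}(K_{X}+\Delta)=K_{X'}+\Delta'$. Pulling back the equality $f\circ g'=g\circ f'$ of morphisms and using $g^{*}P=rP'+(\text{divisors disjoint from }P')$ yields $g'^{*}(f^{*}P)=r f'^{*}P'$ in a neighbourhood of the generic fibre over $P'$. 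Hence for any $s\geq 0$,
\[
g'^{*}\bigl(K_{X}+\Delta+s f^{*}P\bigr)=K_{X'}+\Delta'+sr\,f'^{*}P',
\]
and the equivalence of (sub) log canonicity in Lemma \ref{finite adjunction}(2) tells us that $(X,\Delta+s f^{*}P)$ is sub lc over the generic point of $P$ if and only if $(X',\Delta'+sr\,f'^{*}P')$ is sub lc over the generic point of $P'$. Taking suprema gives $d_{P'}=r\,d_{P}$, which combined with the computation above finishes the proof. The main obstacle, which is essentially bookkeeping, is verifying that the base changed morphism $g'$ really is tame at every codimension one point relevant to the computation; this is where the tameness hypotheses on both $g$ and the horizontal part of $\Delta$ are genuinely used.
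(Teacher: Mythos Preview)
Your proposal is correct and follows essentially the same route as the paper: reduce to the identity $d_{P'}=r\,d_{P}$ by a tame Riemann--Hurwitz computation at $P'$, then establish that identity by applying Lemma \ref{finite adjunction} to the base-changed map $g'\colon X'\to X$ together with $g'^{*}f^{*}P=r\,f'^{*}P'$ near $P'$. The only cosmetic difference is that the paper proves the two inequalities $d_{P'}\geq r d_{P}$ and $d_{P'}\leq r d_{P}$ separately, whereas you invoke the ``if and only if'' in Lemma \ref{finite adjunction}(2) directly; your formulation is slightly cleaner.

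One small remark on your closing comment: the tameness of $g'$ at the relevant codimension one points follows from the tameness of $g$ alone (tame covers are stable under base change after normalisation). The tameness of the horizontal part of $\Delta$ is what makes $(X',\Delta')\to Z'$ a tame conic bundle via Lemma \ref{tame base change}, so that $D_{Z'}$ is well defined, but it is not what is doing the work for $g'$.
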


\begin{proof}
	By \autoref{tame base change}, $(X',\Delta') \to Z'$ is tame and hence $D_{Z'}$ is well defined by the discussion proceeding \autoref{tame base change}.
	
	It remains to show that $g^{*}(K_{Z}+D_{Z})=K_{Z'}+D_{Z'}$. To see this fix $Q$ a prime of $Z'$ and write $r_{Q}$ for the degree of the induced map onto some $P$ a prime of $Z$. 
	
	From the proof of \autoref{finite adjunction} we see that if $K_{Z'}+B=g^{*}(K_{Z}+D_{Z})$ then $1-\text{Coeff}_{Q}(B)=r_{Q}(\text{Coeff}_{P}(D_{Z})-1)$. In particular then it suffices to show that $d_{Q}=r_{Q}d_{P}$. We consider two cases.
	
	Suppose that $c \leq d_{P}$. Then we have $(X,\Delta+cf^{*}P)$ log canonical over $P$. Hence $(X',\Delta'+g'^{*}f^{*}P=\Delta+cf'^{*}g^{*}P)$ is also log canonical by the \autoref{finite adjunction}. But $f'^{*}g^{*}P \geq f'^{*}r_{Q}Q$ so it must be that $d_{Q} \geq r_{Q}c$. Hence in fact $d_{Q} \geq r_{Q}d_{P}$.
	
	Conversely if $c \geq d_{P}$ then,$(X,\Delta+cf^{*}P)$ is not log canonical over $P$. In particular replacing $X$ with a suitable birational model $X'' \to X$ we suppose that there is some prime $E$ of $X$ with $f_{E}=P$ and $\text{Coeff}_{E}(\Delta+cf^{*}P) < -1$. Similarly there is $E'$ on $X'$ with $g'(E')=E$ and $f'(E')=Q$ which also has $\text{Coeff}_{E}(\Delta'+cg'^{*}f^{*}P) < -1$ but $\text{Coeff}_{E}(cg'^{*}f^{*}P)=\text{Coeff}_{E}(cf^{*}r_{Q}P)$ and hence $c \geq rd_{Q}$. Thus we have the equality $d_{Q}=r_{Q}d_{Q}$.
\end{proof}

Note that in the setup above $g^{*}(K_{Z}+D_{Z}+M_{Z})=K_{Z'}+D_{Z'}+M_{Z'}$ so we must have that $M_{Z'}=g^{*}M_{Z}$.

\begin{lemma}
	Suppose that $f\colon X \to Z$ is a tame conic bundle. Then there is a finite, divisorially tamely ramified morphism $g\colon Z' \to Z$ with $g^{*}(K_{Z}+D_{Z}+M_{Z})=K_{Z'}+D_{Z'}+M_{Z'}$ and a birational morphism $h\colon Z'' \to Z'$ such that $M_{Z''}$ is semiample.
\end{lemma}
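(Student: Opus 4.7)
The plan is to reduce to Lemma \ref{ShokurovAdjunction} by a tamely ramified base change that forces the horizontal part of the boundary to be generically a union of sections, and then to pull back the semiampleness statement produced there.

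To build $g$, let $D_{1},\ldots,D_{r}$ be the horizontal components of $\Delta$ and $\tilde D_{i}\to D_{i}\to Z$ their normalisations. Because the conic bundle is tame, each $\tilde D_{i}\to Z$ is finite, separable, and of degree prime to $p$. Base changing $X\to Z$ by $\tilde D_{i}\to Z$, the graph morphism $\tilde D_{i}\to X\times_{Z}\tilde D_{i}$ realises a section contained in the pullback of $D_{i}$, so generically the pullback of $D_{i}$ splits as a section plus a divisor of strictly smaller horizontal degree over $\tilde D_{i}$. Performing this operation iteratively for each horizontal component (and for whichever residual multisections appear after each stage) and normalising after each fibre product, we obtain in finitely many steps a finite morphism $g\colon Z'\to Z$ with the property that every horizontal component of the induced boundary $\Delta'$ on $X':=X\times_{Z}Z'$ is generically a section of $X'\to Z'$. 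Since each elementary step is tamely ramified and tame ramification is preserved under composition, $g$ itself is tamely ramified.

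By Lemma \ref{tame base change}, $(X',\Delta')\to Z'$ is again a tame conic bundle. The Ambro-style lemma immediately preceding the statement then gives
\[
g^{*}(K_{Z}+D_{Z}+M_{Z}) = K_{Z'}+D_{Z'}+M_{Z'},
\]
which is the first half of the conclusion. Now $(X',\Delta')\to Z'$ is a sub $\epsilon$-klt tame conic bundle whose horizontal boundary is generically a union of sections, so Lemma \ref{ShokurovAdjunction} applies and yields a birational morphism $h\colon Z''\to Z'$ with $M_{Z''}$ semiample; composing with $g$ realises $Z''$ as the required object over $Z$.

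The main obstacle I expect is the combinatorial bookkeeping in the inductive construction of $g$: one has to argue that after each normalisation step the pulled-back horizontal boundary really does split off a section, that the iteration terminates (for example by induction on the total horizontal degree of the non-section components), and that at every stage tameness and the conic bundle structure are preserved, so that the Ambro-style lemma remains applicable. Everything else amounts to combining results already proved in the excerpt.
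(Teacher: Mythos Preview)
Your proposal is correct and follows essentially the same strategy as the paper: base change by the normalisation of a horizontal non-section component to peel off a section, iterate until every horizontal component is generically a section, then invoke Lemma~\ref{ShokurovAdjunction}. Your description of the splitting (diagonal section plus a residual piece of strictly smaller horizontal degree, giving termination by induction on total degree) is in fact slightly more careful than the paper's, which simply asserts that after base change ``the one component of $\Delta$ is now generically a section'' and that ``in this fashion, we eventually get to $Z'$''.
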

\begin{proof}
	Let $D$ be any horizontal component of $\Delta$ which is not a section of $f$ then $f$ restricts to a divisorially tamely ramified morphism $D \to Z$. After replacing $D$ with its normalisation and Stein factorising, we may suppose that $D\to Z$ is finite with $D$ normal. Taking the fibre product of $X \to Z$ with the normalisation $\tilde{D}$ of $D$ we find $X' \to \tilde{D}$ satisfying the initial conditions but with the one component of $\Delta$ is now  generically a section.
	
	 In this fashion, we eventually get to $Z' \to Z$ with $g^{*}(K_{Z}+D_{Z}+M_{Z})=K_{Z'}+D_{Z'}+M_{Z'}$ and all the horizontal components of $\Delta$ being generically sections. Hence we may apply \autoref{ShokurovAdjunction} to give the result.
\end{proof}

\begin{proof}[Proof of \autoref{cbf}]
	Take $f\colon (X,\Delta) \to Z$ as given. Then we have $g\colon Z' \to Z$ and $h\colon Z''\to Z'$ as above. Write $d$ for the degree of $g$.
	Fix $B_{Z''}\sim M_{Z''}$ making $(Z'',D_{Z''}+B_{Z''})$ sub klt. Write $B_{Z}=\frac{1}{d}g_{*}h_{*}B_{Z''}$. It is sufficient to show that $(Z,D_{Z}+B_{Z})$ is sub $\epsilon$-klt since $B_{Z} \sim M_{Z}$ is always effective and $D_{Z} \geq 0$ whenever $\Delta$ is.
	
	Let $Y \to Z$ be a log resolution of $(Z,D_{Z}+B_{Z})$ and take $Y',Y''$ appropriate fibre products to form the following diagram.
	
	\[\begin{tikzcd}
	Y'' \arrow[r, "\pi''"] \arrow[d, "h'"] & Z'' \arrow[d, "h"] \\
	Y' \arrow[r, "\pi'"] \arrow[d, "g'"]   & Z' \arrow[d, "g"]  \\
	Y \arrow[r, "\pi"]                     & Z                 
	\end{tikzcd}\]

	We have that $M_{Y''}=\pi''^{*}M_{Z''}$, so write $B_{Y''}=\pi''^{*}B_{Z''}$ and $\frac{1}{d}g'_{*}h'_{*}B_{Y''}=B_{Y}$. Then we must have that $\pi_{*}B_{Y}=B_{Z}$ and $K_{Y}+D_{Y}+B_{Y}\sim \pi^{*}(K_{Z}+D_{Z}+B_{Z})$. Note further that $\pi^{*}B_{Z}$ and $B_{Y}$ differ only over the exceptional locus, hence $B_{Y}$ has SNC support. Indeed $D_{Y}+B_{Y}$ has SNC support. Further since $(Y'',D_{Y''}+B_{Y''})$ is sub $\epsilon$-klt and $g'_{*}h'_{*}(D_{Y''}+B_{Y''})=d(D_{Y}+B_{Y})$ it must be that $D_{Y}+B_{Y}$ have coefficients strictly less than $1-\epsilon$, thus $(Y,D_{Y}+B_{Y})$ is sub $\epsilon$-klt and therefore so is $(Z,D_{Z}+B_{Z})$.
\end{proof}

\subsection{Generic smoothness}

We will also need to consider the pullbacks of very ample divisors on the base of a suitably smooth conic bundle. This is done to obtain an adjunction result which is required in the next section. We work here under the assumption the ground field is closed of positive characteristic $p>2$. This requirement on the characteristic is due entirely to the following lemma.

	\begin{lemma}\label{disc-smooth}
	Let $(X,\Delta) \to Z$ be a regular conic bundle. Then there is some, possibly reducible, curve $C$ on $Z$ such that for any $P \in Z$ the fibre, $F_{P}$, over $P$ is determined as follows:
	\begin{enumerate}
		\item If $P \in Z\setminus C$ then $F_{p}$ is a smooth rational curve.
		\item If $P \in C\setminus \Sing(C)$ then $F_{p}$ is a the union of two rational curves meeting transversally.
		\item If $P \in \Sing(C)$ then $F_{p}$ is a non-reduced rational curve.
	\end{enumerate}
	Further if $H$ is a smooth curve meeting $C$ transversely away from $\Sing(C)$ then $f^{*}H$ is smooth.
\end{lemma}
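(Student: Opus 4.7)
The plan is to realise $X\to Z$ as an embedded family of plane conics and read the fibre type off the rank stratification of the defining quadratic form. Since $f$ is flat of relative dimension $1$ between smooth varieties with smooth rational generic fibre, I would first check that $\omega_{X/Z}^{-1}$ is $f$-ample and that $h^{0}(\omega_{F}^{-1})=3$ and $h^{1}(\omega_{F}^{-1})=0$ on every fibre $F$ (these can be verified uniformly for smooth conics, reduced nodal conics, and plane double lines, provided $p>2$). Cohomology and base change then make $\mathcal{E}:=f_{*}\omega_{X/Z}^{-1}$ a rank-$3$ vector bundle on $Z$ whose formation commutes with base change, and the relative evaluation map realises $X$ as a divisor in $\mathbb{P}(\mathcal{E})\to Z$ meeting each fibre $\mathbb{P}^{2}$ in a plane conic.

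With this embedding in hand, define $C\subset Z$ as the vanishing locus of the determinant of the symmetric matrix of quadratic forms cutting out $X\subset \mathbb{P}(\mathcal{E})$. Since $p>2$, a plane conic is classified up to projective equivalence by the rank of this matrix: rank $3$ gives a smooth conic, rank $2$ a pair of lines meeting transversely, and rank $1$ a non-reduced double line. I would then work locally in coordinates $(u,v)$ on $Z$ with matrix $A(u,v)$: if $A$ has rank $2$ at $q$ then some $2\times 2$ minor is a unit near $q$, so $\det A$ vanishes to order $1$ and $q$ is a smooth point of $C$; a drop to rank $1$ makes all $2\times 2$ minors vanish, hence every partial derivative of $\det A$ vanishes at $q$, putting $q\in \text{Sing}(C)$. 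This gives the stated trichotomy of fibre types.

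For the final assertion, I argue locally on $Z$ near a point $p\in H\cap C$. Over $Z\setminus C$ the morphism $f$ is smooth, so $f^{*}H$ is smooth over $H\setminus C$ as the preimage of a smooth divisor under a smooth morphism. At $p\in H\cap (C\setminus \text{Sing}(C))$, pick local coordinates $(u,v)$ on $Z$ with $C=\{v=0\}$ and, by transversality, $H=\{u=0\}$. Near the node $q$ of the fibre over $p$, a change of coordinates on the ambient $\mathbb{P}^{2}$ puts the local equation of $X$ in the form $xy - v\, h(u,v)$ with $h$ a unit; absorbing $h$ into $v$ we may take $X=\{xy=v\}$ locally, which is indeed smooth as required by hypothesis. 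Then $f^{*}H$ is locally $\{xy=v,\ u=0\}$, i.e.\ the graph $\{v=xy\}\subset \mathbb{A}^{3}_{x,y,v}$, which is smooth. Equivalently, the Jacobian of $xy-vh(0,v)$ in $(x,y,v)$ has nonzero $v$-component at the origin because $h(0,0)\neq 0$.

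The main obstacle is the first step in positive characteristic: confirming $f$-ampleness of $-K_{X/Z}$ on \emph{every} fibre (not merely the generic one) and the base-change identity for $f_{*}\omega_{X/Z}^{-1}$ on the non-reduced fibres that will eventually sit over $\text{Sing}(C)$. Once the family-of-conics picture is in place, the remainder is routine linear algebra on the discriminant matrix together with the Jacobian computation at the node.
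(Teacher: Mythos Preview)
Your approach is essentially the paper's: embed $X$ in a $\mathbb{P}^{2}$-bundle over $Z$ via the relative anticanonical, define $C$ as the determinantal locus of the resulting family of symmetric $3\times 3$ matrices, read fibre types off the rank, and finish with a local Jacobian computation for $f^{*}H$. The paper sketches exactly this, citing Sarkisov, and does not elaborate on the base-change issues you flag; it simply asserts that $-K_{X}$ is relatively ample and yields the embedding.

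There is, however, a genuine gap in your rank-stratification step. You write that if $A$ has rank $2$ at $q$ then ``some $2\times 2$ minor is a unit near $q$, so $\det A$ vanishes to order $1$.'' That inference is false as stated: for $A=\operatorname{diag}(1,1,u^{2}+v^{2})$ the upper-left minor is $1$ but $\det A$ vanishes to second order. What is missing is the hypothesis that $X$ is smooth. After diagonalising near a rank-$2$ point one has $X=\{x^{2}+y^{2}+g(u,v)z^{2}=0\}$ with $g\in\mathfrak{m}_{q}$; the point $(0{:}0{:}1;q)$ lies on $X$, and in the chart $z=1$ the Jacobian there is $(0,0,\partial_{u}g,\partial_{v}g)$, so smoothness of $X$ forces $dg|_{q}\neq 0$ and hence $\det A$ (a unit multiple of $g$) vanishes to order exactly one. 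Your rank-$1$ direction, by contrast, is fine: if all $2\times 2$ minors vanish then all cofactors vanish, and the chain rule kills every base partial of $\det A$. With this correction in place your argument is a more explicit version of the paper's sketch; your local model $\{xy=v\}$ for the last paragraph is in fact the Jacobian computation the paper alludes to but does not write out.
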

\begin{proof}
	This is essentially \cite[Proposition 1.8]{sarkisov1983conic}. We sketch the proof as our statement is slightly different.
	
	Since $X$ is smooth $-K_{X}$ is relatively ample and defines an embedding into a $\mathbb{P}^{2}$ bundle over $Z$.
	Fix any point $P$ in $X$ then in some neighbourhood $U$ around $P$, $X_{U}$ is given inside $\mathbb{P}^{2} \times U$ by the vanishing of $x^{t}Qx$. Here $Q$ is a diagonalisable $3\times 3$ matrix taking coefficients in $\kappa[U]$, unique up to invertible linear transformation, so we may take $C$ to be the divisor on which the rank of $Q$ is less than $3$. That $Q$ has rank $3$ on some open set follows from smoothness of the generic fibre.
	
	Then the singular points of $C$ are precisely the locus on which $Q$ has rank less than $2$. By taking a diagonalisation of $Q$ we may write $X_{U}$ as the vanishing of $\sum A_{i}x_{i}^{2}$ for some $A_{i} \in \kappa[U]$ and we obtain the classification of fibres by consideration of the rank.
	
	Suppose then $H$ is a smooth curve as given. Away from $C$, $f^{*}H$ is clearly smooth, so it suffices to consider the intersection with $C$, however we can see it is smooth here by computing the Jacobian using the local description of $X$ given above. 
\end{proof}

	\begin{theorem}[Embedded resolution of surface singularities]\cite[Theorem 1.2]{cutkosky2009resolution}
	Suppose that $V$ is a non-singular threefold, $S$ a reduced surface in $V$ and $E$ a simple normal crossings divisor on $V$ then there is a sequence of blowups $\pi\colon V_{n} \to V_{n-1} \to ... V$ such that the strict transform $S_{n}$ of $S$ to $V_{n}$ is smooth. Further each blowup is the blowup of a non-singular curve or a point and the blown up subvariety is contained in the locus of $V_{i}$ on which the preimage of $S+E$ is not log smooth.
\end{theorem}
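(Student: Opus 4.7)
The plan is to package the classical embedded resolution of surface singularities in a smooth ambient threefold, valid in all characteristics by Abhyankar's work (with modern treatments in, for example, Cossart--Piltant), into the required form. The only essential observation beyond the classical statement is that every blown-up center can be chosen to lie in the singular locus of the current strict transform of $S$, and hence automatically inside the non-log-smooth locus of the preimage of $S+E$.

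Proceed by induction on the pair $(m,\dim \Sigma_{m})$ in lexicographic order, where $m(S) := \max_{p \in V} \mathrm{mult}_{p}(S)$ and $\Sigma_{m} := \{p \in V : \mathrm{mult}_{p}(S) = m\}$. If $m=1$ then $S$ is already smooth and there is nothing to do. Otherwise $\Sigma_{m}$ is a proper closed subscheme of $S$ of dimension at most $1$, contained in $\mathrm{Sing}(S)$ and hence in the non-log-smooth locus of $S+E$. Pick a smooth component: if $\Sigma_{m}$ contains an isolated point, blow it up; if $\Sigma_{m}$ has a one-dimensional component which is smooth as a curve in $V$, blow that up; if $\Sigma_{m}$ has a one-dimensional component which is singular in $V$, first blow up its finitely many singular points (zero-dimensional smooth centers contained in $\Sigma_{m}$) to reduce to the previous case. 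At each step the chosen center is smooth in $V$ and lies in $\mathrm{Sing}(S)$, hence in the non-log-smooth locus of the preimage of $S+E$, as required.

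The key classical fact is that blowing up a smooth center contained in $\Sigma_{m}$ does not increase the maximal multiplicity $m$ of the strict transform, and by Abhyankar's theorem iterated application strictly decreases $(m,\dim\Sigma_m)$ in finitely many steps. Iterating until $m=1$ produces $V_n$ with $S_n$ smooth. The main obstacle is precisely this termination in positive characteristic, where the Hironaka-style inductive invariants of the characteristic zero proof become subtle; however, since we are dealing only with surfaces embedded in a smooth threefold, Abhyankar's classical argument (or its modern reformulations via the characteristic polyhedron) handles this case directly, and no additional technology is needed.
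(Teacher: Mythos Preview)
The paper does not supply a proof of this theorem at all; it is stated as a known classical result (embedded resolution of surfaces in a smooth threefold, due to Abhyankar and refined by Cossart--Piltant) and then used as input for the subsequent corollary. Your proposal is therefore not competing with any argument in the paper, and in spirit it is exactly right: the result is classical, the centers can always be taken inside the singular locus of the current strict transform of $S$, and hence automatically inside the non-log-smooth locus of the total transform of $S+E$.

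One point of caution: the specific inductive invariant $(m,\dim\Sigma_{m})$ you write down is the characteristic-zero heuristic, and it is \emph{not} literally what drives Abhyankar's proof. In positive characteristic the naive statement ``blowing up a smooth center in $\Sigma_{m}$ eventually drops $(m,\dim\Sigma_{m})$'' is false without a finer invariant (this is precisely where wild ramification and the characteristic polyhedron enter). You acknowledge this in your final paragraph, but the body of your argument reads as though the $(m,\dim\Sigma_{m})$ induction is the actual mechanism, with Abhyankar only supplying termination. It would be cleaner to say up front that you are simply invoking the classical theorem---which guarantees a sequence of blowups in smooth centers contained in $\mathrm{Sing}(S_{i})$ terminating with $S_{n}$ smooth---and then observe, as you do, that $\mathrm{Sing}(S_{i})$ is contained in the non-log-smooth locus of the preimage of $S+E$. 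That observation is the only thing beyond the black-box citation that the statement requires, and your justification of it is correct.
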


\begin{corollary}\label{c_res_1}
	Suppose $(X,\Delta) \to Z$ is a regular, tame conic bundle and we fix a very ample linear system $|A|$ on $Z$.  Then there is a log resolution $(X',\Delta') \to (X,\Delta)$ such that for any sufficiently general element $H\in |A|$, its pullback $G'$ to $X'$ has $(X',G'+E)$ log smooth for $E$ the reduced exceptional divisor of $\pi$.
\end{corollary}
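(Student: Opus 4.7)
The plan is to take a log resolution of $(X,\Delta)$ and then apply Bertini for residually separated morphisms (Theorem \ref{Bertini}) to the composition $X'\to Z\hookrightarrow\mathbb{P}^N$ induced by the very ample linear system $|A|$.

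First I would apply the embedded resolution theorem above with $V=X$, $S=\mathrm{Supp}(\Delta)$, $E=0$ to obtain a log resolution $\pi:X'\to X$ of $(X,\Delta)$, giving a smooth threefold $X'$ together with a log smooth pair $(X',\Delta')$.

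Next, very ampleness of $|A|$ on $Z$ yields a closed embedding $\iota:Z\hookrightarrow\mathbb{P}^N$, and I consider the composition $\phi=\iota\circ f\circ\pi:X'\to\mathbb{P}^N$, which is a morphism of finite type from a smooth scheme. A general element of $|A|$ corresponds, via $\iota$, to the restriction to $Z$ of a general hyperplane $H_0\subset\mathbb{P}^N$, and the pullback $G'=\pi^*f^*H$ equals the scheme-theoretic preimage $\phi^{-1}(H_0)$. If $\phi$ is \emph{residually separated}, then Theorem \ref{Bertini} implies $G'$ is smooth on $X'$; a smooth divisor on a smooth variety is automatically log smooth, so $(X',G')$ is log smooth, as required.

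The main obstacle is verifying residual separability of $\phi$. At closed points this is automatic since all residue fields equal the algebraically closed base field $\kappa$; at non-closed points, since $\iota$ is a closed embedding and $\pi$ is birational, this reduces to separability of $f$. The generic fibre of $f$ is a smooth rational curve, so separability of the function field extension holds, and the same is true at codimension-one points over $Z\setminus C$ since fibres there are smooth by the previous lemma. The potential failure occurs at points of $X'$ lying above $\mathrm{Sing}(C)\subseteq Z$, where the fibres of $f$ are non-reduced. I would circumvent this by applying Bertini's theorem on the smooth surface $Z$: a sufficiently general $H\in|A|$ is smooth, meets $C$ transversely at smooth points, and entirely avoids the finite set $\mathrm{Sing}(C)$. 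Consequently $G'=\phi^{-1}(H_0)$ is disjoint from the points of $X'$ above $\mathrm{Sing}(C)$, so Theorem \ref{Bertini} applied to the restriction of $\phi$ to the open subset $U=\phi^{-1}(\mathbb{P}^N\setminus\iota(\mathrm{Sing}(C)))$ (on which $\phi$ is residually separated) yields smoothness of $G'$, completing the proof.
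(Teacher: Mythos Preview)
Your approach has a genuine gap: the composition $\phi=\iota\circ f\circ\pi:X'\to\mathbb{P}^N$ is \emph{not} residually separated in general, even after removing $\mathrm{Sing}(C)$. Neither ``$\pi$ birational'' nor ``fibres of $f$ smooth'' yields residual separatedness.

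For $\pi$: if you blow up a smooth curve $V\subset X$, the exceptional divisor is a $\mathbb{P}^1$-bundle over $V$. Over the generic point of $V$ the fibre is $\mathbb{P}^1_{\kappa(V)}$, and since $\kappa(V)$ is an imperfect function field, it has closed points (e.g.\ cut out by $u^p-a$ with $a\notin\kappa(V)^p$) whose residue fields are purely inseparable over $\kappa(V)$. These correspond to curves in the exceptional divisor at which $\pi$ is not residually separated.

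For $f$: smoothness of fibres does not give residual separatedness either. Already in the trivial bundle $X=\mathbb{P}^1_{[u:v]}\times \mathbb{A}^2_{s,t}$ the divisor $D=\{(u/v)^p=s\}$ dominates $Z$ with $D\to Z$ purely inseparable; the generic point of $D$ is a point where $f$ fails residual separatedness, and here the discriminant curve $C$ is empty, so removing $\mathrm{Sing}(C)$ does nothing.

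The paper's argument is structurally different and this is where the tameness hypothesis is actually used. It does not apply Bertini to $X'\to\mathbb{P}^N$; instead it tracks the pullbacks $G_i$ step by step through the blowups of the embedded resolution. The point is that each blowup center $V_i$ lies in the non--log-smooth locus, and the paper argues that if $V_i$ maps onto a curve $V_i'\subset Z$ then $V_i$ must lie inside (the strict transform of) a horizontal component of $\Delta$. Tameness then forces $V_i\to V_i'$ to be separable, and that is exactly what is needed to conclude that a general $H$ meeting $V_i'$ transversally gives $G_i$ meeting $V_i$ transversally, so the blowup preserves smoothness of $G_i$. Your proposal never invokes tameness, and without it the smoothness of $G'$ cannot be established in positive characteristic.
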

\begin{proof}
	
	By the previous theorem we may find birational morphism $\pi\colon X' \to X$ which is a log resolution of $(X,\Delta)$ factoring as blowups $X'=X_{n} \to X_{n-1} \to .... X_{0}=X$ of smooth subvarieties contained in the non-log smooth locus of each step.
	
	We show first a general $G'$ is smooth. At each stage we blow-up smooth curves $V_{i}$ in the non-log smooth locus. Let $G_{i}$ be the pullback of $H$ to $X_{i}$, suppose for induction it is smooth. That $G_{0}$ is smooth is the content of \autoref{disc-smooth} and so the base case of the induction argument holds.
	
	We may assume that $f_{i,*}V_{i}=V_{Z,i}$ is a curve for $f_{i}\colon X_{i} \to X \to Z$ else a general $H$ avoids it and so a general $G_{i+1}$ is smooth also. Note that each vertical component of $\Delta$ is log smooth near the generic point of their image, since $X$ is a regular conic bundle, so $V_{i}$ must be contained in the strict transform of some horizontal component of $\Delta$. Since $V_{i}$ is not contracted, it follows that $V_{i} \to V_{Z,i}$ is separable as $(X,\Delta,Z)$ is tame. Thus as a general $H$ meets $V_{Z,i}$ transversely, a general $G_{i}$ meets $V_{i}$ transversely and hence a general $G_{i+1}$ is smooth. By induction then $G'=G_{n}$ is smooth.
	
%
	Suppose that $V$ is a curve contained in the locus on which $\pi^{-1}$ is not an isomorphism that is not contracted by $f$. Then for a general point $P$ of $V$, we claim that the fibre over $P$ is log smooth. As before we argue by induction, the the base case trivially true. Suppose then that we blowup a curve $V_{i}$ lying over $V$ on $X$ and $V_{Z}$ on $Z$. Then $V_{i}$ must meet the fibre over $P$ transversally. Indeed $V_{i} \to V \to V_{Z}$ is separable, as above, forcing $V_{i} \to V$ to be separable also. But then $V_{i}$ meets a general fibre transversally as claimed.
	
	Suppose now that $E$ is an integral exceptional divisor of $X'\to X$. Let $V=\pi_{*}E$, then as before general $G$ meets $V$ transversely if $V$ is a curve, or not at all otherwise. Suppose $V$ is a curve, then for a general point $P$ of $V$, the fibre over $P$ is a system of log smooth curves. Finally then the intersection of a general $G'$ and $E$ is a scheme of pure dimension $1$ contained in the disjoint union of such systems of log smooth curves, in particular it is log smooth. 
	
	Suppose then we fix two exceptional divisors $E_{1},E_{2}$ meeting at a curve $V$. Again we suppose that $V$ is not contracted by $f'=f \circ \pi$. Write $\pi_{*}V=V_{X}$ and $f'_{*}V=V_{Z}$. Then $V_{X} \to V_{Z}$ is separable as before and for a general $G'$ meeting $V$ transversely, the intersection of $G$ with $\pi^{*}V'$ is a log smooth system of rational curves, and then $G.V \subseteq G.\pi^{*}V_{X}$ is log smooth, or equally it is finitely many points with multiplicity $1$. 
\end{proof}

	\begin{theorem}
	Let $(X,\Delta) \to Z$ be a regular, tame conic bundle and $|A|$ a very ample linear system on $Z$. Then there is a log resolution $(X',\Delta') \to (X,\Delta)$ such that for a general $H \in |A|$, the pullback $G'$ to $X'$ is smooth with $(X',\Delta'+G')$ log smooth. 
\end{theorem}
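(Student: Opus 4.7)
The plan is to bootstrap from the previous corollary: take the log resolution $\pi\colon X' \to X$ it produces, so that $(X',\Delta')$ is already log smooth and the pullback $G'$ of a general $H \in |A|$ is smooth. What remains is to upgrade smoothness of $G'$ to log smoothness of $(X',\Delta'+G')$, which amounts to showing that $G'$ meets each stratum of the SNC divisor $\Delta'$ transversally and avoids triple (or higher) intersection loci whose codimension does not permit a transverse slice.

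To establish this I would stratify $X'$ by the locally closed pieces $\Sigma = D_{i_1} \cap \cdots \cap D_{i_k} \setminus \bigcup_{j \notin \{i_1,\dots,i_k\}} D_j$, where the $D_i$ run over components of $\Delta'$, and treat each stratum separately. Since $(X',\Delta')$ is log smooth each such $\Sigma$ is smooth. If $f' := f \circ \pi$ does not dominate $Z$ when restricted to the closure of $\Sigma$, then a general $H \in |A|$ avoids the image and $G'$ is disjoint from $\Sigma$. Otherwise $f'|_{\overline{\Sigma}}$ is dominant, and the key reduction is to show that this map is generically residually separated, after which Theorem \ref{Bertini} applied to the smooth stratum $\Sigma$ delivers that $G'|_\Sigma = f'^{*}H \cap \Sigma$ is smooth of the expected dimension. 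This smoothness of the stratified intersections is exactly the transversality required for $(X',\Delta'+G')$ to be log smooth.

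The main obstacle is propagating the separability required for Bertini through $\pi$. The horizontal components of $\Delta$ are tamely ramified over $Z$ by hypothesis, and Lemma \ref{tame base change} ensures their strict transforms in $X'$ are too; their pairwise intersections then map to $Z$ separably on a dense open subset for the same reason. For strata involving exceptional divisors, the analysis in the proof of the preceding corollary already shows that each blowup centre either fails to dominate $Z$ (so a general $H$ avoids its image) or dominates separably because it sits inside a tamely ramified horizontal stratum. Consequently, after imposing the finitely many open conditions on $H$ coming from the Bertini application to each stratum $\Sigma$, and combining with the smoothness of $G'$ already established, a general member of $|A|$ produces a $G'$ with $(X',\Delta'+G')$ log smooth, completing the proof.
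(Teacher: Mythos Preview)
Your proposal is correct and follows essentially the same strategy as the paper: start from the resolution produced by the preceding corollary, then check that a general $G'$ meets each component and each pairwise intersection of $\text{Supp}(\Delta')$ transversally, using tameness to feed Bertini for residually separated morphisms. The paper organises this as an explicit case analysis (exceptional divisors, horizontal strict transforms, their mutual intersections) rather than a single stratification argument, but the substance is identical.

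One point where your write-up is thinner than the paper's: you assert that pairwise intersections of horizontal components ``map to $Z$ separably on a dense open subset for the same reason''. The paper makes this explicit: since $D' \to Z$ is tamely ramified, any curve $V \subset D'$ not contracted over $Z$ maps with degree coprime to $p$, hence separably, and so a general $G'|_{D'}$ meets $V$ transversally. That is the actual content behind ``for the same reason'', and it is worth saying, since tameness is defined in terms of divisors on $D'$ rather than intersections of components of $\Delta'$ directly.
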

\begin{proof}
	Write $E$ for the reduced exceptional divisor. For a general $H \in |A|$ we let $G=f^{*}H$ be the pullback to $X$. We then take $X'$ as in \autoref{c_res_1}.
	
	Clearly a general $G'$ avoids the intersection of any $3$ components of $\text{Supp}(\Delta')+E$, and from above $(X',G'+E)$ is log smooth. Suppose $D$ is a vertical component of $\Delta$. Then either $G$ can be assumed to avoid it, or to meet it at a smooth fibre. By the usual arguments, since the only non-contracted curves we blow up map separably onto their image, $G'$ meets $D'$ the strict transform of $D$ on $X'$ along a log smooth locus. Further this locus meets any exceptional divisor either transversally or not at all. Now suppose $D_{2}$ is any other component of $\text{Supp}(\Delta')+E$ which does not dominate $Z$. Then if either $D_{2}.D'$ has dimension less than $1$ or is contracted over $Z$ then a general $G'$ avoids it, so suppose otherwise. In which case $D_{2}$ must be exceptional over $X$ with image $V\subseteq D$ on $X$. However $D_{2}.D'$ is just the strict transform of $V$ inside $D'$ and, for a general $G'$, $G'.D_{2}.D$ is log smooth as required. 
	
	It remains then to consider the horizontal components of $\Delta$. Let $D$ be any such component and $D'$ its strict transform. Since $(X,\Delta,Z)$ is tame, so is $(X',\Delta',Z)$. In particular then $D' \to Z$ is divisorially tamely ramified and so residually separated over $Z$ away from finitely many points of $Z$. Hence by Bertini's Theorem, \autoref{Bertini}, the pullback of a general $H$, which is just the intersection of a general $G'$ with $D'$ is smooth. Further as $D' \to Z$ is divisorially tamely ramified, if $V$ is any curve on $D'$ not contracted over $Z$ a general $G'|_{D'}$ meets it transversally. Hence for any other component $D_{2}$ of $\text{Supp}(\Delta')+E$ we have $(X',D'+D_{2}+G')$ log smooth for a general $G'$ and the result follows.
\end{proof}

\begin{corollary}\label{tameAdjunction}
	Suppose $(X,\Delta,Z)$ is a terminal, sub $\epsilon$-klt, tame conic bundle. Take a general very ample $H$ on $Z$, with $G=f^{*}H$, then
	 $(G,\Delta|_{G}=\Delta_{G})$ is sub $\epsilon$-klt.
\end{corollary}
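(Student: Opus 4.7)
My plan is to compute the discrepancies of $(G,\Delta_G)$ on the log smooth model produced by the preceding theorem and transfer the bound to $G$ via adjunction. The point is that smoothness of $G'$ inside the log smooth ambient $(X',\Delta'+G')$ makes the restricted sub-pair on $G'$ log smooth with the same coefficient bounds as $\Delta'$, from which the $\epsilon$-sub klt property is immediate; the rest is a formal adjunction computation.

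Concretely, apply the previous theorem to obtain a log resolution $\pi\colon X'\to X$ of $(X,\Delta)$ such that, for a general $H\in|A|$, the pullback $G'=(f\circ\pi)^{*}H$ is smooth and $(X',\Delta'+G')$ is log smooth, where $\Delta'$ is defined by $K_{X'}+\Delta'=\pi^{*}(K_X+\Delta)$. Since $(X,\Delta)$ is $\epsilon$-sub klt, every coefficient of $\Delta'$ is strictly less than $1-\epsilon$. As $\Delta'+G'$ is SNC with $G'$ smooth, the restricted sub-pair $(G',\Delta'|_{G'})$ is log smooth and inherits its coefficients from $\Delta'$, so it is $\epsilon$-sub klt.

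Next, I transfer this property to $G$ by a double adjunction. On the smooth variety $X'$, $G'$ is a smooth Cartier divisor, so standard adjunction gives
\[K_{G'}+\Delta'|_{G'}=(K_{X'}+\Delta'+G')|_{G'}.\]
Since $G=f^{*}H$ is Cartier on $X$ and $\pi$ is birational, $G'=\pi^{*}G$, hence $K_{X'}+\Delta'+G'=\pi^{*}(K_X+\Delta+G)$. Writing $\pi_G=\pi|_{G'}\colon G'\to G$ and using that $G$ is normal for generic $H$, adjunction on $X$ gives $(K_X+\Delta+G)|_G=K_G+\Delta_G$, so
\[K_{G'}+\Delta'|_{G'}=\pi_G^{*}(K_G+\Delta_G).\]
Since $\pi_G$ is a birational morphism realising $(G',\Delta'|_{G'})$ as a crepant model of $(G,\Delta_G)$, the log discrepancy of $(G,\Delta_G)$ at any valuation with center on $G$ equals that of $(G',\Delta'|_{G'})$, and the $\epsilon$-sub klt property passes across.

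The main technical obstacle is verifying normality of $G$ for a general $H$, so that adjunction on $X$ is legitimate. The property $S_2$ for a general member of the pullback linear system is standard; regularity in codimension one requires more care in positive characteristic and is handled by combining the classification of conic bundle fibres from the earlier lemma (applied after replacing $X$ by a regular model) with the tameness hypothesis on the horizontal components of $\Delta$ via Theorem \ref{Bertini}. Should that fail at finitely many points, one can instead work entirely on the log smooth model $G'$, compute discrepancies at all valuations there, and deduce the $\epsilon$-sub klt condition for $(G,\Delta_G)$ directly from the crepancy relation above.
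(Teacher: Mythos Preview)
Your argument is correct and is essentially the paper's proof: use the log resolution furnished by the preceding theorem so that $(X',\Delta'+G')$ is log smooth, restrict to $G'$, and compare with $(G,\Delta_G)$ via the crepancy relation $K_{G'}+\Delta'|_{G'}=\pi_G^*(K_G+\Delta_G)$.

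One organisational point: the preceding theorem is stated for \emph{regular} tame conic bundles, not terminal ones, so you should begin (as the paper does) by observing that since $X$ is terminal it is smooth in codimension two, and hence after discarding finitely many points of $Z$ the conic bundle becomes regular. Once this reduction is made, the earlier lemma on fibres of regular conic bundles already gives that $G=f^*H$ is smooth for general $H$, so your final paragraph worrying about normality of $G$ is not needed.
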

\begin{proof}
	Throwing away finitely many points of $Z$ we may freely suppose that the conic bundle is regular.
	
	By the previous theorem there is a log resolution $\pi\colon (X',\Delta') \to (X,\Delta)$ with $(X',\Delta'+G')$ smooth. Write $\pi_{G}\colon G' \to G$ for the restricted map. Then $(K_{X'}+\Delta'+G')|_{G'}=\pi_{G}^{*}(K_{G}+\Delta_{G})=K_{G'}+\Delta'|_{G}$. However $\Delta'|_{G}$ is log smooth with coefficients less than $1-\epsilon$ by construction, and hence $(G,\Delta_{G})$ is $\epsilon$-klt by assumption. 
\end{proof}

\section{$F$-Split Mori Fibre Spaces}\label{S-MFS}

The aim of this section is to prove the following theorem. 

\begin{theorem}\label{setup}
	For a field $\kappa$ of positive characteristic we let $S_{\kappa}$ be the set of $(X,\Delta)$, $\epsilon$-LCY threefold pairs with $X$ terminal, globally $F$-split and rationally chain connected over $\kappa$. We further require that $(X,\Delta)$ admits a $K_{X}$ Mori fibration $f\colon (X,\Delta) \to Z$ where either
	\begin{enumerate}
		\item $Z$ is a smooth rational curve, there is $H$ on $Z$ very ample of degree $1$ and a general fibre $G$ of $X \to Z$ is smooth.
		\[\text{or}\]
		\item $p>2$ and $(X,\Delta) \to Z$ is a tame, terminal conic bundle such that there is a very ample linear system $|A|$ on $Z$ with $A^{2} \leq c$. In which case $G$ the pullback of a sufficiently general $H \in |A|$ is smooth with $(G,\Delta_{G}=\Delta|_{G})$ $\epsilon$-klt by \autoref{tameAdjunction}.
	\end{enumerate}
	Then the set of base varieties $$S'=\{X \text{ such that } \exists \Delta \text{ with } (X,\Delta) \in S_{\kappa} \text{ for algebraically closed } \kappa\}$$ is birationally bounded over $\mathbb{Z}$. 
\end{theorem}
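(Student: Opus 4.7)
The plan is to exhibit, for each $X \in S'$, an effective divisor $M$ on $X$ such that $\phi_{|M|}$ is birational and $\text{Vol}(X,M)$ is uniformly bounded independently of $X$; Corollary \ref{l_birationally-bounded} then immediately gives birational boundedness over $\mathbb{Z}$. The divisor $M$ will have the shape (a bounded multiple of $-K_X$) $+$ (a bounded pullback of an ample from $Z$), with the two summands playing the respective roles of separating points on fibres and separating the fibres themselves.

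First I would control the general fibre $G$ of $f$. In case (1), $G$ is smooth by hypothesis; adjunction shows $(G,\Delta_G)$ is an $\epsilon$-klt LCY pair, and since $X \to F_*X$ splits and $G$ is a general fibre, $G$ is itself $F$-split and rationally chain connected. By Theorem \ref{SBAB}, such $G$ form a bounded family, so there exists a uniform $m_0$ such that $-m_0 K_G$ is very ample with $(-m_0K_G)^2 \le V_0$ for a universal constant $V_0$. In case (2), Corollary \ref{tameAdjunction} gives $(G,\Delta_G)$ sub $\epsilon$-klt with $G = f^*H$ smooth; since $A^2 \le c$ forces $Z$ itself into a bounded family (via Corollary \ref{l_birationally-bounded}), and since $G \to H$ inherits a conic bundle structure, the same two-dimensional analysis together with Theorem \ref{cbf} applied to the base change of $f$ along $H \hookrightarrow Z$ puts $G$ into a bounded family and produces an analogous bounded very ample $N_G$.

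Next I would lift $N_G$ to $X$. Since $X$ is $F$-split, Lemma \ref{vanish} gives $H^i(X,L) = 0$ for $i>0$ and every ample $\mathbb{Q}$-Cartier $L$. Because $-K_X$ is $f$-ample, there is a uniform $b$ (depending only on the bounding family for $Z$ and the constants $c, V_0$) making $N := -m_0 K_X + b f^* H$ ample on $X$ with $N - G$ still ample; hence the restriction $H^0(X,N) \to H^0(G, N|_G)$ is surjective (via the vanishing of $H^1(X, N-G)$), and $N|_G$ differs from $-m_0 K_G$ only by a trivial-on-$G$ summand, so $|N|$ restricts to a very ample linear system on the general $G$. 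Setting $M := N + f^* H_0$ for a fixed very ample $H_0$ on $Z$, the pullback summand $f^*|H_0|$ separates distinct fibres while the restriction $|N|_G$ embeds each general fibre, so $\phi_{|M|}$ is birational. Using Lemma \ref{vol} together with $(f^*H_0)^{\dim Z + 1} = 0$ and the uniform bound on $N|_G^{\dim G} = (-m_0 K_G)^{\dim G}$, every triple intersection $N^i \cdot (f^*H_0)^{3-i}$ is bounded, hence $M^3$ is uniformly bounded.

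The main obstacle is case (2): controlling the fibre $G$ and the lifting simultaneously requires both the tameness of the conic bundle (so that Corollary \ref{tameAdjunction} applies and the general $G$ is smooth with controlled singularities) and the canonical bundle formula Theorem \ref{cbf} (so that $(Z, D_Z+M_Z)$ is sub $\epsilon$-klt and $Z$-intersection numbers are bounded), and one must check that the surjectivity argument for $H^0(X,N) \to H^0(G, N|_G)$ together with the inductive surjectivity $H^0(G, N|_G) \to H^0(\text{general curve in } G, \cdot)$ can be made uniform. The $F$-split hypothesis is crucial throughout: it underlies both the vanishing used to guarantee surjectivity and, together with Theorem \ref{split}, prevents pathologies in the fibres that would otherwise obstruct boundedness.
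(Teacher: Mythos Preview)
Your overall shape is right --- build $M = -mK_X + n\,f^*H$, show it defines a birational map by lifting sections from $G$ via $F$-split vanishing, and bound $M^3$ --- and this is exactly what the paper does. But two load-bearing steps are asserted rather than argued, and both are genuinely nontrivial.

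\textbf{Uniform ampleness of $-m_0K_X + b\,f^*H$.} You write that ``there is a uniform $b$ \ldots\ making $N := -m_0 K_X + b f^* H$ ample on $X$,'' justified by ``$-K_X$ is $f$-ample.'' Relative ampleness controls only the vertical curves; it says nothing about horizontal ones. A curve $C$ dominating $Z$ can have $-K_X\cdot C$ arbitrarily negative a priori, and no amount of $f^*H$ fixes that without a bound on $K_X\cdot C$. The paper earns this bound by a real argument: use Theorem~\ref{Jiang} on $(G,\Delta_G)$ to find a uniform $\lambda$ with $(G,(1+\lambda)\Delta_G)$ klt, lift to show $\nklt(X,(1+\lambda)\Delta)$ is vertical, then apply the Nlc Cone Theorem (Theorem~\ref{WCT}) to $(X,(1+\lambda)\Delta)$ so that every $-K_X$-negative extremal ray has $-\lambda K_X\cdot \Gamma \ge -6$. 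Only then does $-K_X + nG$ become ample for $n>7/\lambda$ (Corollary~\ref{nAmple}). Your proposal skips this entirely; without it the vanishing-and-lifting step has no input.

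\textbf{Bounding $M^3$.} You claim Lemma~\ref{vol} plus the bound on $N|_G^{\dim G}$ controls all the mixed intersections $N^i\cdot(f^*H_0)^{3-i}$. It does not control the $i=3$ term: Lemma~\ref{vol} gives $\operatorname{Vol}(-m_0K_X + b f^*H) \le \operatorname{Vol}(-m_0K_X) + (\text{restriction terms})$, so you still need $\operatorname{Vol}(-K_X)$ bounded. That is precisely Theorem~\ref{Main2} (equivalently Theorems~\ref{J1} and~\ref{J2}), an independent weak-BAB result whose proof occupies Section~6 and uses connectedness-lemma arguments of Jiang type. It is not a consequence of the fibre being bounded, and your sketch does not invoke it.

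Minor point: in case~(2) you don't need Theorem~\ref{cbf} to bound $G$; the paper just uses that $G$ is a smooth conic bundle over $H$ and invokes the Fujita conjecture for smooth surfaces to get a uniform very ample divisor on $G$.
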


\begin{remark}
	In practice this will be applied to pairs over fields of characteristic $p > 7,\frac{2}{\delta}$ with boundary coefficients bounded below by $\delta$. The constraints on $p$ come from \autoref{smoothness} and \autoref{cbf}, via \autoref{S2}.
\end{remark}

This chapter is devoted to the proof, but the outline is as follows. We fix a general, very ample divisor $H$ on the base and write $G=f^{*}H$. Then argue that $A=-mK_{X}+nG$ is ample, for $m,n$ not depending on $X,\Delta$ or $G$. This is done by bounding the intersection of $K_{X}$ with curves not contracted by $f$ and generating an extremal ray in the cone of curves. We then show that in fact we may choose these $m,n$ such that $A$ defines a birational map, by lifting sections from $G$ using appropriate boundedness results in lower dimensions. The $F$-split assumption is used to lift sections from $G$ with \autoref{vanish}, it will also be needed to apply \autoref{setup} by ensuring that the bases $Z$ are suitably bounded. 

If, for some $t>0$, the non-klt locus of $(X,(1+t)\Delta)$ is contracted then since $(K_{X}+(1+t)\Delta) \sim -tK_{X}$ it follows that every $-K_{X}$ negative extremal ray is generated by a curve $\gamma$ with $K_{X}.\gamma \leq \frac{6}{t}$. In particular as we have $G.C \geq 1$ for any $-K_{X}$ negative curve $C$ it must be that $-K_{X}+\frac{7}{t}G$ is ample. Clearly for any $(X,\Delta) \to Z$ there is such a $t$, however we wish to find one independent of the pair. For this we may use a result due to Jiang, the original proof is a-priori for characteristic $0$, but the proof is arithmetic in nature and holds in arbitrary characteristic.

\begin{theorem}\label{Jiang}\cite[Theorem 5.1]{jiang2018birational}
	Fix a positive integer $m$ and $\epsilon >0$ a real number. Then there is some $\lambda$ depending only on $m,\epsilon$ satisfying the following property.
	
	Take $(T,B)$ any smooth, projective $\epsilon$-klt surface. Write $B=\sum b_{i}B_{i}$ and suppose $K_{T}+B \equiv N-A$ for $N$ nef and $A$ ample. If $B.N,\sum b_{i}, B^{2} \leq m$ then $(T,(1+\lambda)B)$ is klt. 
\end{theorem}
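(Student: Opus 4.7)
The plan is to argue by contradiction. Suppose no such uniform $\lambda$ exists; then there is a sequence of pairs $(T_n,B_n)$ satisfying all the hypotheses with the same $m,\epsilon$, but $(T_n,(1+\lambda_n)B_n)$ fails to be klt for some $\lambda_n \to 0$. Since each $(T_n,B_n)$ is $\epsilon$-klt, every coefficient $b_i^{(n)}$ is at most $1-\epsilon$, so for $\lambda_n < \epsilon/(1-\epsilon)$ the inequalities $(1+\lambda_n)b_i^{(n)}<1$ still hold. Consequently, for all large $n$, the non-klt locus of $(T_n,(1+\lambda_n)B_n)$ cannot contain any divisorial component of $B_n$ and must consist of isolated closed points $p_n \in T_n$, each arising from some valuation $E_n$ over $p_n$ with discrepancy $a(E_n,T_n,B_n) = -1+\delta_n$ and $\delta_n\to 0$.

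The core of the proof is to bound, uniformly in $m$ and $\epsilon$, the multiplicity of $B_n$ at $p_n$ (and at infinitely near points needed to realise $E_n$). For a single blowup $\sigma:\widetilde T_n\to T_n$ at $p_n$ with exceptional $E$, the coefficient of $E$ in $\sigma^{*}(K_{T_n}+B_n)$ is $\text{mult}_{p_n}(B_n)-1$, so if $(T_n,(1+\lambda_n)B_n)$ is non-klt already after one blowup, then $\text{mult}_{p_n}(B_n) \to 2$. To exclude this, I would use the expansion
\[
B_n^{2} = \sum_{i} (b_i^{(n)})^{2}(B_n^{i})^{2} + 2\sum_{i<j} b_i^{(n)} b_j^{(n)}\, B_n^{i}\!\cdot\! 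B_n^{j}.
\]
The cross terms are nonnegative and each is at least $b_i^{(n)} b_j^{(n)}\text{mult}_{p_n}(B_n^{i})\text{mult}_{p_n}(B_n^{j})$, while large self-intersections $(B_n^{i})^{2}>0$ are bounded by rearrangement; components of negative self-intersection are controlled by pairing with the nef divisor $N$ using $N\cdot B_n\leq m$ (the nefness forces $N\cdot B_n^{i}\geq 0$ for each component, and having $\text{mult}_{p_n}(B_n^{i})$ large forces $N\cdot B_n^{i}$ large whenever $N$ meets $B_n^{i}$ properly at $p_n$). Combined with $\sum b_i^{(n)}\leq m$, which caps how many high-coefficient components can accumulate, this yields a bound $\text{mult}_{p_n}(B_n)\leq M_1(m,\epsilon)$ strictly less than $2$.

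The remaining step is to run the same analysis at infinitely near points. Iterating the blowup, the pulled-back pair on $\widetilde T_n$ inherits analogous intersection bounds (once one accounts for the exceptional contribution, which is itself controlled by the multiplicity just bounded), the $\epsilon$-klt condition is preserved, and any non-klt locus of the scaled pair lifts to a point of $\widetilde T_n$. The goal is to show that this iteration terminates in a bounded number of steps: at each stage, the discrepancy can drop by at most a factor determined by the bounded multiplicity from the previous step, and the $\epsilon$-klt hypothesis guarantees every intermediate discrepancy exceeds $\epsilon-1$. Together these give a function $N(m,\epsilon)$ bounding the length of any chain of blowups that can still move the discrepancy of $E_n$ closer to $-1$, and hence a uniform lower bound on $1+a(E_n,T_n,B_n)$, contradicting $\delta_n\to 0$.

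The principal obstacle is the infinitely-near-point iteration: the intersection hypotheses $N\cdot B,B^{2},\sum b_i\leq m$ do not transfer verbatim to the blowup, and one must carefully track how the exceptional contribution modifies $N$, $A$, and the boundary on $\widetilde T_n$. In particular, controlling chains of blowups where the successive centers sit on an exceptional divisor that is itself forced into the boundary requires the combined use of the coefficient bound $\sum b_i\leq m$ (bounding component count at any single point) and the $\epsilon$-klt condition (bounding the geometric depth of the resolution tree). Getting these two bounds to interact cleanly is the heart of Jiang's argument and is where most of the technical work lies.
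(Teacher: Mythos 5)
The paper does not prove this statement itself; it cites Jiang's Theorem 5.1 from \cite{jiang2018birational} and merely observes that the original proof is arithmetic and hence characteristic-free. So there is no in-paper proof to compare against, and your proposal stands or falls on its own merits.

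Your outline has a genuine gap at exactly the step you flag as the ``heart'' of the argument. Two concrete problems. First, the bound $\mathrm{mult}_{p}(B)<2-\epsilon$ at the chosen non-klt point is a tautological consequence of $\epsilon$-klt-ness alone (the one-blowup discrepancy $1-\mathrm{mult}_{p}(B)>\epsilon-1$ gives it immediately), so the intersection-theoretic machinery you set up with $B^{2}$, $N\cdot B$, and $\sum b_i$ is doing no work at this stage. The hypotheses are needed precisely for the deep exceptional divisors, where $m_{E}(B)$ (the coefficient of $E$ in the pullback of $B$) is not controlled by $\epsilon$-klt-ness: indeed $m_{E}(B)=a(E,T,0)-a(E,T,B)$ can grow linearly in the depth of $E$ while $a(E,T,B)$ stays above $\epsilon-1$, so the naive requirement $\lambda<\epsilon/m_{E}(B)$ degenerates. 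Second, the claim that the iteration terminates after $N(m,\epsilon)$ blowups is asserted but not argued. The difficulty is precisely that the exceptional divisor of one blowup enters the boundary of the next, and your intersection invariants ($B^{2}$, $N\cdot B$, $\sum b_i$) change under blowup in a way you have not tracked; you acknowledge this (``do not transfer verbatim to the blowup'') but provide no mechanism to close the loop. Without a quantitative statement showing that, say, the proximity inequalities together with the bounds $B^{2}\leq m$ and $\sum b_i\leq m$ cap the weighted sum of multiplicities along any chain of infinitely near points, there is no contradiction to extract, and the proof does not go through. You have correctly identified the hard step, but identifying it is not the same as doing it.
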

First we show that results of this form lift to characterisations of the non-klt locus of $(X,(1+t)\Delta)$, then show how the result above may be applied here.
\begin{lemma}
	 We use the notation of \autoref{setup}. Suppose $Z$ is a surface and there is $t$ such that $(G,(1+t)\Delta_{G})$ is klt. Then every curve in the non-klt locus of $(X,(1+t)\Delta)$ is contracted by $f$.
\end{lemma}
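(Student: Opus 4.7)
The plan is to argue by contradiction via adjunction to a general $G=f^{*}H$ combined with a log resolution extracting a non-klt valuation. Suppose toward contradiction that there is a curve $\gamma\subseteq\nklt(X,(1+t)\Delta)$ not contracted by $f$, so $f(\gamma)$ is a curve on $Z$. By definition of the non-klt locus there is a divisorial valuation $E$ over $X$ whose center contains $\gamma$ with $a(E,X,(1+t)\Delta)\leq -1$.

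First I would fix a log resolution $\pi:Y\to X$ of $(X,(1+t)\Delta)$ that extracts $E$ (blowing up further if necessary, using that one can realise any valuation on a suitable birational model), and write $K_{Y}+\Delta_{Y}=\pi^{*}(K_{X}+(1+t)\Delta)$ so that $\mathrm{Coeff}_{E}(\Delta_{Y})\geq 1$. Next, using the setup of Theorem \ref{setup} and the very ampleness of $|A|$, I would take $H\in|A|$ sufficiently general so that: (i) $G=f^{*}H$ is smooth and not a component of $\Delta$, hence $\Delta_{G}:=\Delta|_{G}$ is well defined; (ii) $\pi^{*}G$ coincides with the strict transform $G'$, is smooth, and meets $\mathrm{Supp}(\Delta_{Y}+E)$ transversally without containing any of its components; and (iii) $H$ meets the curve $f(\gamma)$. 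Each of these is an open condition on $|A|$, so a common general $H$ exists. These generality choices are exactly the ones permitted by Corollary \ref{tameAdjunction}, so they ensure that everything is compatible with the log resolution.

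Now I would apply adjunction twice. On $X$, since $G$ is Cartier and $\Delta_{G}=\Delta|_{G}$,
\[(K_{X}+(1+t)\Delta+G)|_{G}=K_{G}+(1+t)\Delta_{G}.\]
Pulling back to $Y$ and restricting to $G'$, since $G'$ is not contained in any component of $\Delta_{Y}$,
\[K_{G'}+\Delta_{Y}|_{G'}=(K_{Y}+\Delta_{Y}+G')|_{G'}=\pi_{G}^{*}(K_{G}+(1+t)\Delta_{G}),\]
where $\pi_{G}:G'\to G$ is the restricted morphism. The klt hypothesis on $(G,(1+t)\Delta_{G})$ then says that $(G',\Delta_{Y}|_{G'})$ is sub klt, so every coefficient of $\Delta_{Y}|_{G'}$ is strictly less than $1$. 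On the other hand, since $\gamma\cap G\neq\emptyset$ (because $f(\gamma)\cap H\neq\emptyset$) and $E$ meets $G'$ transversally, $\bar{E}:=E\cap G'$ is a nonempty effective divisor on $G'$ whose coefficient in $\Delta_{Y}|_{G'}$ equals $\mathrm{Coeff}_{E}(\Delta_{Y})\geq 1$, contradicting the previous sentence.

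The main obstacle is the simultaneous arrangement of generality conditions on $H$: we need $H$ general enough for $G$ to be smooth and $\Delta$-transverse (so adjunction on the base gives the clean formula $\Delta|_{G}=\Delta_{G}$), general enough for $G'$ to be smooth on the fixed log resolution $Y$ and meet $\mathrm{Supp}(\Delta_{Y})+E$ properly (so the restriction $\Delta_{Y}|_{G'}$ is meaningful with the expected coefficients), and general enough to meet $f(\gamma)$. The existence of such $H$ is a Zariski-density argument relying on the very ampleness of $|A|$ together with Corollary \ref{tameAdjunction}; making this compatibility precise is the only delicate point, everything else being standard adjunction.
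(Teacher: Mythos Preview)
Your approach is the same adjunction--to--$G'$ idea the paper uses, but the order of operations creates a genuine gap in positive characteristic. You first fix a log resolution $\pi:Y\to X$ of $(X,(1+t)\Delta)$ extracting $E$, and only afterwards try to choose $H\in|A|$ so that $G'=\pi^{*}G$ is smooth and meets $\mathrm{Supp}(\Delta_{Y})$ transversally. That is a Bertini statement for the morphism $f\circ\pi:Y\to Z$, and it is \emph{not} guaranteed: the exceptional centres you blew up to reach $E$ may contain curves mapping inseparably onto $Z$, so $f\circ\pi$ need not be residually separated and $(f\circ\pi)^{*}H$ need not be smooth for general $H$. Your appeal to Corollary~\ref{tameAdjunction} does not help here: that corollary (and the theorem preceding it) produces one \emph{specific} log resolution on which the general pullback $G'$ is log smooth, built by carefully blowing up only centres lying in tame horizontal components; it says nothing about an arbitrary resolution extracting your chosen $E$. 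Without condition (ii) you also cannot control cancellation in $\Delta_{Y}|_{G'}$: other components of $\Delta_{Y}$ with negative coefficient could pass through the same component of $E\cap G'$, so the restricted coefficient need not be $\geq 1$.

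The fix is to reverse the order, which is exactly what the paper does. Note first that your condition (iii) is automatic: $H$ is very ample, so it meets every curve on $Z$, in particular $f(\gamma)$. Hence the $G$ already fixed in the setup of Theorem~\ref{setup} suffices. Now take $\pi:X'\to X$ to be a log resolution of $(X,\Delta+G)$. Then $(X',B+G')$ is log smooth \emph{by definition} of log resolution, with $G'$ sharing no component with $B$; no Bertini is needed. Since $\mathrm{Supp}((1+t)\Delta)=\mathrm{Supp}(\Delta)$, this $\pi$ is also a log resolution of $(X,(1+t)\Delta)$, so every non-klt centre is the image of some prime $E\subset X'$ with $\mathrm{Coeff}_{E}(B)\geq 1$. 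Your adjunction computation now goes through verbatim and forces $E\cap G'=\emptyset$, hence $H$ misses $f(\pi(E))$, which is impossible for a curve.
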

\begin{proof}
	Let $\pi\colon X' \to X$ be a log resolution of $(X,\Delta+G)$ with $K_{X'}+\Delta'=\pi^{*}(K_{X}+\Delta)$, then $(X',\Delta'+G')$ is log smooth and $\Delta'$ and $G$ have no common components, where $G'$ is the pullback of $G$. Now $X' \to X$ must also be a log resolution of $(X,(1+t)\Delta)$, and hence if we write $K_{X'}+B=\pi^{*}(K_{X}+(1+t)\Delta)$ then it is also true that $(X',B+G')$ is log smooth and that $B$ and $G'$ have no common components. Hence $(G',B|_{G'})$ is sub klt by assumption and in particular it has coefficients strictly less than $1$. 
	
	Suppose $Z$ is a non-klt center of $(X,(1+t)\Delta)$ and $E$ is a prime divisor lying over $Z$ inside $X'$. Then $E$ has coefficients strictly larger than $1$ in $B$. Since $(X',B+G')$ is log smooth, it must be that $E|_{G'}$ is an integral divisor and it is trivial if and only if $E$ and $G'$ do not meet. But then $E|_{G'} =\lfloor E|_{G'} \rfloor =0$ and so $E$ does not meet $G'$. Hence neither does $H$ meet $f_{*}\pi_{*}E=f_{*}Z$. In particular if $C$ is a curve in the non-klt locus, then there is an ample divisor $H$ on $Z$ not meeting $f_{*}C$. This is possible only if $f_{*}C$ is a point. 
\end{proof}

\begin{lemma}
	Using the notation of \autoref{setup} suppose that $Z$ is a curve and write $Y$ for the generic fibre of $f\colon X\to Z$. If there is $t$ such that $(Y,(1+t)\Delta_{Y})$ is klt, then every curve in the non-klt locus of $(X,(1+t)\Delta)$ is contracted by $f$.
\end{lemma}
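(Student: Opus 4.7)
My plan is to mimic the proof of the preceding lemma, replacing the ample pullback $G$ with the generic fibre $Y$ of $f$. I would first fix a log resolution $\pi : X' \to X$ of $(X, \Delta)$, which is automatically a log resolution of $(X, (1+t)\Delta)$, and write $(K_{X'} + B) = \pi^*(K_X + (1+t)\Delta)$ so that $(X', B)$ is log smooth. Set $f' := f \circ \pi : X' \to Z$ and let $Y' \subseteq X'$ denote the scheme-theoretic generic fibre of $f'$; it is birational to $Y$ via $\pi|_{Y'}$.

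The pivotal step is to show that $(Y', B|_{Y'})$ is a log smooth sub-pair over $k(\eta_Z)$ all of whose coefficients are strictly less than $1$. Smoothness of $Y'$ will follow from the Case (1) hypothesis that a general closed fibre of $f$ is smooth: this forces $f$, and hence $f'$, to be smooth at the generic point of $X'$, which gives smoothness of $Y'$ over $k(\eta_Z)$. The SNC condition on $B|_{Y'}$ is inherited from the log smoothness of $(X', B)$ near $Y'$. Adjunction then yields $K_{Y'} + B|_{Y'} = (\pi|_{Y'})^*(K_Y + (1+t)\Delta_Y)$, and the klt hypothesis on $(Y, (1+t)\Delta_Y)$ forces every coefficient of $B|_{Y'}$ to be strictly less than $1$.

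To conclude, suppose $C$ is a curve in $\nklt(X, (1+t)\Delta)$. By log smoothness of $(X', B)$ there is a prime divisor $E \subseteq X'$ with coefficient at least $1$ in $B$ and $C \subseteq \pi(E)$. If $E$ were $f'$-horizontal then $E \cap Y'$ would be a non-empty divisor on $Y'$, and by the log smooth transversality of $E$ and $Y'$ each of its components would inherit coefficient at least $1$ in $B|_{Y'}$, contradicting the previous paragraph. Hence $E$ must be $f'$-vertical, so $f'(E)$ is a closed point of $Z$ and $\pi(E)$ is contained in a single fibre of $f$; consequently $C \subseteq \pi(E)$ is contracted by $f$. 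The main hurdle in the argument is verifying smoothness of the generic fibre in positive characteristic, where generic smoothness is not automatic; for this I would appeal directly to the explicit Case (1) hypothesis that a general closed fibre is smooth.
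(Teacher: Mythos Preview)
Your proposal is correct and follows essentially the same route as the paper: take a log resolution $\pi:X'\to X$, write $K_{X'}+B=\pi^{*}(K_{X}+(1+t)\Delta)$, restrict to the generic fibre $Y'\to Y$, and observe that any horizontal component of $B$ with coefficient $\geq 1$ would restrict to a divisor over $Y$ with discrepancy $\leq -1$, contradicting klt-ness of $(Y,(1+t)\Delta_{Y})$. One small remark: your justification that $Y'$ is smooth (``$f'$ smooth at the generic point of $X'$'') only gives smoothness at a single point, but this is harmless---the argument needs only the crepant identity $K_{Y'}+B|_{Y'}=(\pi|_{Y'})^{*}(K_{Y}+(1+t)\Delta_{Y})$, which holds for the normal $Y'$ regardless, and the paper's own assertion that the restriction is a log resolution carries the same imprecision.
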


\begin{proof}
	This follows essentially as above.
	Take a log resolution $\pi\colon (X',\Delta') \to (X,\Delta)$. Write $Y'$ for the generic fibre of $X' \to Z$. Then $(Y',\Delta'|_{Y'}) \to (Y,\Delta_{Y})$ is a log resolution. Again write $K_{X'}+B=\pi^{*}(K_{X}+(1+t)\Delta)$. Then again if $B$ has a component $D$ with coefficient at least $1$ then $D$ cannot dominate $Z$, else it would pull back to $G'$ to give a contradiction. Hence the non-klt locus of $(X,(1+t)\Delta)$ must be contracted as claimed. 
\end{proof}

\begin{lemma}
	Using the notation of the previous lemmas. There is some $\lambda$ independent of $(X,\Delta)$ and $G$ for which the non-klt locus of $(X,(1+t)\Delta)$ is contracted for all $t \leq \lambda$.
\end{lemma}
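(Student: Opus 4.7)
The plan is: by the two preceding lemmas, in both cases it suffices to produce $\lambda > 0$, independent of $(X,\Delta)\in S$ and of $G$, such that $(G,(1+\lambda)\Delta_G)$ is klt, where in case $(1)$ we may take $G$ to be a general smooth fibre, which is isomorphic to the generic fibre $Y$ after passing to an algebraic closure of the residue field. In either case $(G,\Delta_G)$ is a smooth projective $\epsilon$-klt surface, by Corollary \ref{tameAdjunction} in case $(2)$ and by adjunction to a general fibre of a terminal Mori fibration in case $(1)$. I would then apply Theorem \ref{Jiang} to $(G,\Delta_G)$ with a choice of parameter $m$ that is uniform in $(X,\Delta)$ and $G$; since the resulting $\lambda$ in Theorem \ref{Jiang} depends only on $m$ and $\epsilon$, this yields the required uniform $\lambda$.

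Applying Theorem \ref{Jiang} requires writing $K_G+\Delta_G\equiv N-A$ with $N$ nef and $A$ ample, and bounding $\Delta_G\cdot N$, $\sum b_i$, and $\Delta_G^{2}$ by a single constant $m$, where $\Delta_G=\sum b_i\Delta_G^i$. In case $(1)$, $G$ is a smooth del Pezzo surface, because $-K_G=-K_X|_G$ is ample by the $K_X$-Mori fibration assumption; hence $G$ lies in the classical bounded family of smooth del Pezzo surfaces over $\mathbb{Z}$. This gives a uniform very ample Cartier polarisation $A_0$ together with uniform bounds on $K_G^{2}$ and $(-K_G)\cdot A_0$. Since $K_G+\Delta_G\equiv 0$, these control $\Delta_G^{2}$ and $\Delta_G\cdot A_0$; each component $D$ of $\Delta_G$ is integral with $D\cdot A_0\ge 1$ and coefficient at most $1$, so $\sum b_i$ is also bounded. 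Writing $K_G+\Delta_G\equiv \eta A_0-\eta A_0$ for small rational $\eta$ supplies $(N,A)$ with the required bounds.

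In case $(2)$, $K_G+\Delta_G\equiv G|_G=(f|_G)^{*}(H|_H)=:N_0$, a nef class pulled back from the curve $H\subset Z$ of degree $H^{2}\le c$. Thus $N_0^{2}=0$ and $\Delta_G\cdot N_0=\Delta\cdot (f^{*}H)^{2}=H^{2}(\Delta\cdot F)\le 2c$, using that a conic fibre $F$ of $f$ satisfies $-K_X\cdot F=2$ and $(K_X+\Delta)\cdot F=0$. A uniform ample class $A_0$ on $G$ is built from $-K_X|_G$ (which is $f|_G$-ample) together with $(f|_G)^{*}$ of an ample of bounded degree on $H$; combining $\Delta_G\equiv N_0-K_G$ with the bounded intersection profile of $A_0$ controls $\Delta_G^{2}$, $\Delta_G\cdot A_0$, and hence $\sum b_i$, along the lines of case $(1)$. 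Writing $K_G+\Delta_G\equiv (N_0+\eta A_0)-\eta A_0$ for a small rational $\eta>0$ then verifies the hypotheses of Theorem \ref{Jiang}.

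The main obstacle is case $(2)$: since $K_G+\Delta_G$ is nef of self-intersection zero rather than anti-ample, $G$ is not placed in a bounded family by a classical appeal to del Pezzo surfaces or to Theorem \ref{SBAB}. Instead one must bound the intersection numbers of $\Delta_G$ intrinsically from the conic bundle geometry, relying on $\Delta\cdot F=2$ for every conic fibre and on the uniform bound $A^{2}\le c$ on the polarisation of the base, in order to build a workable ample class on the a priori unbounded surface $G$ and control all three Jiang quantities at once.
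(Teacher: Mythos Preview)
Your overall strategy matches the paper exactly: reduce via the two preceding lemmas to finding a uniform $\lambda$ with $(G,(1+\lambda)\Delta_G)$ klt, then feed $(G,\Delta_G)$ into Theorem~\ref{Jiang}. Case~(1) is fine and essentially what the paper does: since $G$ is a smooth del Pezzo, $K_G^2\le 9$, and as $-K_G$ is ample Cartier one gets directly $\sum b_i\le \Delta_G\cdot(-K_G)=K_G^2\le 9$ and $\Delta_G^2=K_G^2\le 9$, with $N=A$ any small ample.

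The gap is in case~(2). You assert that a ``uniform ample class $A_0$ on $G$'' can be built from $-K_X|_G$ together with the pullback of a bounded-degree ample from $H$, and that $\Delta_G\cdot A_0$ is then bounded, giving $\sum b_i\le \Delta_G\cdot A_0$. But $-K_X|_G=-K_G-kF$, so your candidate is $A_0=-K_G+mF$ for some $m$; for this to be ample one must kill every curve $\sigma\subset G$ with $-K_G\cdot\sigma$ very negative, e.g.\ negative sections of the conic bundle. Nothing you have written bounds such $m$ uniformly, and the surfaces $G$ are genuinely unbounded (the number of degenerate fibres, hence $K_G^2$, is not bounded below). One can in fact extract such a bound from the $\epsilon$-klt condition on $(G,\Delta_G)$, but this is a separate nontrivial argument you have not supplied; as written, the step ``and hence $\sum b_i$'' is unjustified.

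The paper avoids this entirely. It splits $\Delta_G=\sum\lambda_iD_i+\sum\mu_iF_i$ into horizontal and vertical parts. The horizontal sum is controlled by pairing with a fibre: $\sum\lambda_i\le \Delta_G\cdot F=2$. The vertical sum is controlled not by any polarisation of $G$ but by Lemma~\ref{cc}: if $\sum\mu_i>3+k$ one rescales the vertical part to produce $B$ with $(G,B)$ klt and $K_G+B+F^1+F^2+F^3\sim 0$ for three general fibres, forcing the disconnected divisor $F^1+F^2+F^3$ to have exactly two components, a contradiction. This yields $\sum\mu_i\le 3+c$ with no need to bound $G$ itself. Together with $\Delta_G^2=K_G^2+4k\le 8+4c$ (using $K_G^2\le 8$ for a smooth conic bundle) and $\Delta_G\cdot N\le 3c$ for $N=kF+(\text{small ample})$, Theorem~\ref{Jiang} applies. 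If you want to salvage your route, you must either carry out the boundedness of a polarisation on $G$ explicitly or replace it with this connectedness trick.
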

\begin{proof}
	We consider two cases. 
	
	Suppose first $Z$ is a curve, so the generic fibre $Y$ is a regular del Pezzo surface and $(G,\Delta_{G})$ is $\epsilon$-klt LCY. Then, by the work of Tanaka \cite[Corollary 4.8]{tanaka2019boundedness}, $(-K_{G})^{2} \leq 9$. We write $\Delta_{G}=\sum \lambda_{i}D_{i}$ and since $G$ is regular we have $D_{i}.K_{G} \geq 1$. Hence $\sum \lambda_{i} \leq \Delta_{G}.(-K_{G}) \leq 9$ and $\Delta_{G}^{2} =(-K_{G})^{2} \leq 9$. We conclude the result holds by \autoref{Jiang} with $N=-K_{G}$ and $A=-K_{G}$. 

	Suppose then that $Z$ is a surface. Then by \autoref{disc-smooth}, $G$ is a smooth surface, geometrically ruled over a general very ample divisor $H$ on $Z$. Further by \autoref{tameAdjunction}, $(G,\Delta_{G})$ is $\epsilon$-klt and by assumption $K_{G}+\Delta_{G}\sim kF$ where $F$ is the general fibre over $H$ and $H^{2}=k \leq c$. Finally note that $\Delta_{G}^{V} \sim_{f,\mathbb{Q}} 0$.

	We may write $\Delta_{G}= \sum \lambda_{i}D_{i}+ \sum \mu_{i}F_{i}$ where $F_{i}$ are fibres over $H$ and $D_{i}$ dominate $H$. Since $F_{i}$ is a fibre and $G$ is smooth, each $F_{i}$ is reduced by the genus formula and contains at most $2$ components since $-K_{X}.F_{i}=-2$. Further $\Delta_{G}.F=(-K_{G}).F=2$ and hence $\Delta_{G}^{2}=(-K_{G}+kF)^{2}=(-K_{G})^{2}-2kK_{G}.F +(kF)^{2} \leq (-K_{G})^{2}+4c$ which in turn is bounded above by $8+4c$ due to \cite[Proposition 11.19]{buadescu2001algebraic}, since $G$ is a smooth geometrically ruled surface. 
	
	It remains then to show that the sum of the coefficients of $\Delta_{G}$ is bounded. Note that $\sum \lambda_{i} \leq \sum \lambda_{i}D_{i}.F =\Delta_{G}.F =2$. We therefore need only bound $\sum \mu_{i}$.
	
	Suppose for contradiction that $w=\sum \mu_{i} >3 +k$. Let $B=\sum \lambda_{i}D_{i} +(1-\frac{3+k}{w})\sum \mu_{i}F_{i}  \sim -K_{G}-(F^{1}+F^{2}+F^{3})$, for general fibres $F^{i}$.
	
	Then $(G,B)$ is klt and so by \autoref{cc}, $D=F^{1}+F^{2}+F^{3}$ has 2 connected components, a clear contradiction.
	
	Therefore we may choose $A$ small and ample with $A.\Delta_{G} < c$ and write $N=kF+A$ to satisfy the conditions of \autoref{Jiang}. The result then follows as $\Delta_{G}.N=kF.\Delta_{G}+A.B\leq 3c$ is still bounded.
\end{proof}
	\begin{corollary}\label{nAmple}
	There is some $n$ such that for any $(X,\Delta) \to Z$ and $G$ as in \autoref{setup} we have $-K_{X}+nG$ is ample.
\end{corollary}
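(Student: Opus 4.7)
The plan is to apply the nlc Cone Theorem (Theorem \ref{WCT}) to the pair $(X,(1+\lambda)\Delta)$, where $\lambda$ is the uniform constant supplied by the preceding lemma, and then invoke Kleiman's criterion. Since $(X,\Delta)$ is $\epsilon$-LCY we have $K_X+\Delta\equiv 0$, and hence $K_X+(1+\lambda)\Delta\equiv -\lambda K_X$, so the $K_X+(1+\lambda)\Delta$-negative part of $\overline{NE(X)}$ coincides with its $-K_X$-negative part.

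By Theorem \ref{WCT} we decompose
\[
\overline{NE(X)}=\overline{NE(X)}_{K_X+(1+\lambda)\Delta\geq 0}+\overline{NE(X)}_{\mathrm{nlc}}+\sum_j R_j,
\]
where each $R_j$ is generated by a curve $C_j$ with $(K_X+(1+\lambda)\Delta).C_j\geq -6$, i.e.\ $-K_X.C_j\geq -6/\lambda$. The previous lemma says every curve in the non-klt locus of $(X,(1+\lambda)\Delta)$ is contracted by $f$, and since $-K_X$ is $f$-ample, any $f$-contracted curve satisfies $-K_X.C>0$. Hence $\overline{NE(X)}_{\mathrm{nlc}}$ lies in the half-space $-K_X\geq 0$, and likewise each $C_j$ is not contracted by $f$. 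Because $G=f^{*}H$ with $H$ very ample on $Z$, the projection formula gives $G.C=H.f_{*}C\geq 1$ for any curve $C$ not contracted by $f$, so in particular $G.C_j\geq 1$.

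Fix any rational $n>6/\lambda$ (for example $n=7/\lambda$ as suggested in the prose above) and set $L:=-K_X+nG$. On each $R_j$ one computes $L.C_j\geq -6/\lambda+n>0$. On an extremal ray $R\subseteq\overline{NE(X)}_{-K_X\geq 0}$ with generator $C$, either $-K_X.C>0$ (so $L.C>0$ directly), or $-K_X.C=0$ in which case $C$ cannot be $f$-contracted and so $G.C\geq 1$, giving $L.C\geq n>0$. Thus $L$ is strictly positive on every extremal ray of $\overline{NE(X)}$; since every nonzero closed face of a closed convex cone in finite dimensions contains one of its extremal rays (and such a ray would be extremal in the whole cone), this upgrades to $L>0$ on all of $\overline{NE(X)}\setminus\{0\}$, and $L$ is ample by Kleiman's criterion. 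The choice of $n$ depends only on $\lambda$, which is uniform over the family.

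The chief subtlety is the interaction between the nlc cone contribution and the $f$-ampleness of $-K_X$: it is this combination that rules out nlc-extremal rays being $-K_X$-negative and that forces the $C_j$ to dominate their image on $Z$, which is what turns the bound $-K_X.C_j\geq -6/\lambda$ from the Cone Theorem into a positive lower bound once we add $nG$.
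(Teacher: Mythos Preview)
Your proof is correct and follows essentially the same strategy as the paper: apply the nlc Cone Theorem to $(X,(1+\lambda)\Delta)$, use that $K_X+(1+\lambda)\Delta\equiv -\lambda K_X$ together with the previous lemma to see that the nlc contribution is contained in the $f$-vertical part where $-K_X>0$, and then bound $-K_X.C_j\geq -6/\lambda$ on the remaining extremal rays while $G.C_j\geq 1$ since they are not $f$-contracted. Your treatment of Kleiman's criterion via extremal rays of the closed pointed cone is in fact more careful than the paper's own proof, which only checks effective curves; the one cosmetic slip is that for a class generator $C$ of a ray in $\overline{NE(X)}_{-K_X\geq 0}$ you should write $G.C>0$ rather than $G.C\geq 1$, but this does not affect the conclusion.
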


\begin{proof}
	Take any $n \geq \frac{7}{\lambda}$ for $\lambda$ as in the previous lemma. Suppose $R$ is a $K_{X}+(1+\lambda)\Delta\equiv -\lambda K_{X}$ negative extremal ray. By construction, every curve in the nlc locus is contracted by $X \to Z$ so $-\lambda K_{X}$ is positive on $\overline{NE(X)}_{nlc}\setminus \{0\}$. Hence by \autoref{NLCT} any such ray is spanned by a curve $C$ with $0< -\lambda K_{X}.C \leq 6$ and $G.C >0$. Since $G$ is Cartier, we have $G.C > 0$ and hence $(-K_{X}+nG).G \geq \frac{1}{\lambda} >0$. In particular $-K_{X}+nG$ is ample as claimed. 
\end{proof}

\begin{theorem}\label{t-ample}
	Let $(X,\Delta) \to Z$ and $G$ be as in \autoref{setup}. Then there is $t$ not depending on the pair $(X,\Delta)$ nor on $G$ with $-3K_{X}+tG$ ample and defining a birational map. 
\end{theorem}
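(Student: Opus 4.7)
The strategy is to show that the linear system $\lvert D\rvert := \lvert {-}3K_X + tG\rvert$ separates a general pair of points for uniform $t$, by combining $F$-split vanishing with two-dimensional boundedness of the general divisor $G$. Fix $n$ as in Corollary \ref{nAmple}, so that $-K_X + nG$ is ample independently of $(X,\Delta,G)$. For any $t \geq 3n+1$, both $D = 3(-K_X + nG) + (t - 3n)G$ and $D - G = 3(-K_X + nG) + (t - 1 - 3n)G$ are ample, being positive combinations of the ample class $-K_X + nG$ and the base point free (hence nef) class $G$.

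Since $X$ is globally $F$-split and $D - G$ is ample, Lemma \ref{vanish} yields $H^i(X, D - G) = 0$ for $i > 0$. Applied to the ideal sheaf sequence of any member $G_0 \in \lvert G\rvert$, this produces a surjection $H^0(X, D) \twoheadrightarrow H^0(G_0, D|_{G_0})$. By adjunction, $D|_{G_0} \sim -3K_{G_0} + (t+3)G|_{G_0}$. In Case 1 one has $G|_{G_0} = 0$ (two general fibres over a smooth curve are disjoint) and $G_0$ is a smooth $\epsilon$-klt log del Pezzo surface with $(-K_{G_0})^2 \leq 9$ by Tanaka's classification of regular del Pezzos; in Case 2, $G_0$ is a smooth conic bundle over a bounded curve $H$, and $G|_{G_0}$ is the pullback of a divisor of degree $H^2 \leq c$ on $H$. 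In either case $G_0$ lies in a family bounded over $\mathbb{Z}$ (via Theorem \ref{SBAB} together with the del Pezzo classification), so by applying Witaszek's effective very-ampleness \cite{witaszek2015effective}[Theorem 4.1] I can enlarge $t$ uniformly so that $D|_{G_0}$ is very ample; in particular $\lvert D|_{G_0}\rvert$ defines a birational map on $G_0$, and by vanishing together with asymptotic Riemann--Roch the quantity $h^0(X, D - G)$ is positive uniformly.

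To conclude birationality of $\lvert D\rvert$ on $X$, take a general pair $p, q \in X$. Since $\lvert G\rvert$ is base point free (pulled back from the very ample system $\lvert A\rvert$ on $Z$) and generically $f(p) \neq f(q)$, I may choose $G_0 \in \lvert G\rvert$ with $p \in G_0$ and $q \notin G_0$. Let $t_0 \in H^0(X, G)$ be the defining section of $G_0$. As $h^0(X, D - G) > 0$ and $\lvert D - G\rvert$ has base locus a proper subscheme, there exists $s \in H^0(X, D - G)$ with $s(q) \neq 0$. Then $s \cdot t_0 \in H^0(X, D)$ vanishes at $p \in G_0$ while $(s \cdot t_0)(q) = s(q)\, t_0(q) \neq 0$. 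Hence $\lvert D\rvert$ separates generic pairs of points and defines a birational map.

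The principal obstacle is the middle step: producing a single $t$ for which $D|_{G_0}$ is very ample and $h^0(X, D - G) > 0$ uniformly across both cases and across the whole family of $(X,\Delta,G)$. This requires carefully combining Theorem \ref{SBAB} with effective Fujita-type very-ampleness to control the bounded family of surfaces $G_0$, and checking that in Case 2 the pullback contribution $(t+3)G|_{G_0}$ is positive enough along fibres of $G_0 \to H$ to compensate for the possibly weak positivity of $-3K_{G_0}$ on a conic bundle.
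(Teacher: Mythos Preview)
Your overall strategy matches the paper's: use $F$-split vanishing to kill $H^1(X,D-G)$, lift sections from a general $G_0\in|G|$ where $D|_{G_0}$ is very ample for a uniform $t$, and conclude birationality. The paper treats the two cases separately, using that $-3K_{G_0}$ is classically very ample on a smooth del Pezzo in Case~1 and Terakawa's Fujita bound $K_{G_0}+4A$ very ample for smooth surfaces in Case~2, and it separates points via restriction to two fibres rather than your multiplication trick; but the architecture is the same.

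There is, however, a genuine gap in your Case~2 step. You assert that $G_0$ lies in a bounded family via Theorem~\ref{SBAB}, but that theorem requires $-(K_{G_0}+B)$ nef for some boundary $B$. Here adjunction gives $K_{G_0}+\Delta|_{G_0}=(K_X+\Delta+G_0)|_{G_0}\sim G_0|_{G_0}$, which is the pullback of an \emph{ample} class on $H$; hence $-(K_{G_0}+\Delta|_{G_0})$ is anti-nef, not nef, and Theorem~\ref{SBAB} does not apply to these conic-bundle surfaces. Fortunately boundedness is a red herring: since $G_0$ is smooth its Cartier index is $1$, so you can feed the ample Cartier class $A=(-K_X+nG)|_{G_0}$ directly into an effective very-ampleness statement. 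The paper does exactly this with Terakawa's surface Fujita \cite{terakawa1999d}[Corollary 2.5], obtaining $K_{G_0}+4A=(-3K_X+k'G)|_{G_0}$ very ample for an explicit uniform $k'$; your Witaszek reference would also work after arranging divisibility, but drop the appeal to \ref{SBAB}.

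Two smaller repairs. First, ``asymptotic Riemann--Roch'' does not give $h^0(X,D-G)>0$ for a single divisor; instead run your own lifting argument at level $t-1$: once $(D-G)|_{G_0}$ has a section and $H^1(X,D-2G)=0$, you get $h^0(X,D-G)>0$. Second, in characteristic $p$ separating general closed points does not by itself force birationality; you also need generic unramifiedness. You have the ingredients: the surjection onto the very ample system on $G_0$ makes $d\phi_{|D|}$ injective on $T_pG_0$, and your section $s\cdot t_0$ has differential $s(p)\,dt_0|_p$ at $p\in G_0$, which is nonzero in the normal direction for general $p$. It is worth saying this explicitly.
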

\begin{proof}
	Consider first the case that $\dim Z=1$. Then $G$ is a smooth del Pezzo surface, so $-3K_{G}$ is globally generated by \cite[Proposition 2.14]{bernasconi2022pezzo}. Let $G_{1},G_{2}$ be other general fibres and consider
	\[0 \to \ox(-3K_{X}+kG-G_{1}-G_{2}) \to \ox(-3K_{X}+kG) \to \mathcal{O}_{G_{1}}(-3K_{G_{1}})\oplus \mathcal{O}_{G_{2}}(-3K_{G_{2}}) \to 0.\]
	
	Since $X$ is globally $F$-split $H^{i}(X,A)=0$ for all $i>0$ and $A$ ample by \autoref{vanish}. In particular then $H^{1}(X,\ox(-3K_{X}+kG-G_{1}-G_{2}))$ vanishes when $k\geq 3n+2$ for $n$ as given by the proceeding corollary. Therefore we may lift sections of $-3K_{G_{i}}$ to see that $-3K_{X}+kG$ defines a birational map for any $k \geq 3n+2$. 

	Suppose instead that $\dim Z=2$, so $G$ is a conic bundle. Choose a general $H'\sim H$ on $Z$ and let $G'$ be its pullback. Consider $A_{k}=(-K_{X}+kG)|_{G'}=(-k_{G'}+(k-1)dF)$ for $d \geq 1$, where $F$ is the general fibre of $G'\to H'$. Then $A_{k}$ is ample for $k >n$ and is Cartier since $G$ is smooth. In particular by the Fujita conjecture for smooth surfaces \cite[Corollary 2.5]{terakawa1999d}, $K_{G'}+4A_{k}$ is very ample. Choosing suitable $k,k'$ we may write $K_{G'}+4A_{k}=-3K_{G'}+4(k-1)dF=(-3K_{X}+k'G)|_{G'}$. Consider now
	\[0 \to \ox(-3K_{X}+(k'-1)G)\to \ox(-3K_{X}+k'G)\to \mathcal{O}_{G'}(-3K_{G'}+4(k-1)dF) \to 0.\]
	Again the higher cohomology of $-3K_{X}+(k'-1)G$ vanishes and we may lift sections to $H^{0}(X,\ox(-3K_{X}+k'G))$ from general fibres. In particular $-3K_{X}+k'G$ separates points on a general $G'$ so $-3K_{X}+(k'+1)G$ separates general points and thus defines a birational map. 

	We may then pick some suitably large $t$ for which the result holds as $k,k'$ were chosen independently of $(X,\Delta) \to Z$ and $G,G_{1},G_{2}$.
\end{proof}

\begin{lemma}
	Let $(X,\Delta) \to Z,S$ and $G$ be as in \autoref{setup} and $t$ as in \autoref{t-ample}. Then there is some constant $C$ with $(-3K_{X}+tG)^{3} \leq C$ and $(X,\Delta)\in S$.
\end{lemma}

\begin{proof}
	The anticanonical volumes $\Vol(X,-K_{X})$ are bounded by some $V$ by \autoref{Main2} which is proved in the next section.
	
	Suppose first $\dim Z=1$. Then $\Vol(G,-K_{G})=(-K_{G})^{2} \leq 9$ and so by Lemma 3.4
	\[\Vol(X,-3K_{X}+nG) \leq \Vol(X,-3K_{X}) + 3t\Vol(G,-3K_{G})\leq 27(V+9t)\]
	as required.

	Suppose instead then that $\dim Z=2$. So $G$ is a conic bundle over some $H$ on $Z$ with $H^{2} \leq c$. Hence we get
	\[\Vol(G,(-3K_{X}+tG)|_{G})= (-3K_{G}+(t+1)H^{2}F)^{2}=9K_{G}^{2}-2(t+1)H^{2}(K_{G}.F)\]
	where $F$ is a general fibre of $G \to H$. Hence $F$ is a smooth rational curve and $K_{G}.F=-2$ and $\Vol(G,(-3K_{X}+tG)|_{G})\leq 72+4(t+1)c$. Then as before we may apply \autoref{vol} to get 
	\[\Vol(X,-3K_{X}+tG) \leq \Vol(X,-3K_{X}) + 3n\Vol(G,(-3K_{X}+tG)|_{G})\]
	and boundedness follows.
\end{proof}

\begin{proof}[Proof of \autoref{setup}]
	Suppose $(X,\Delta) \in S$. Then $A=-3K_{X}+tG$ is birational with bounded volume by the preceding results. Thus $S'$ is birationally bounded by \autoref{l_birationally-bounded}.
	\end{proof}

\section{Weak BAB for Mori Fibre Spaces}\label{S-BAB}

This section is devoted to providing a bound on the volume of $-K_{X}$ under suitable conditions. Namely we show that the claim holds if $X$ belongs to a suitable family of $\epsilon$-LCY Mori fibre spaces whose bases are bounded. We will work over fields of characteristic $p>5$ as we will need to appeal to \autoref{WCL} at several points. In practice these results will be applied under the hypotheses of \autoref{Main2} with the constraints on characteristic needed to ensure $X$ is a tame conic bundle, or a generically smooth del Pezzo fibration as appropriate.
We consider first the case that $X$ is a tame conic bundle over a surface. 

\begin{theorem}\label{J1}
	Pick $\epsilon,c >0$. Then there is $V(\epsilon,c)$ such that if $f\colon (X,\Delta) \to S$ is any projective, tame conic bundle over any closed field of characteristic $p> 5$, $(X,\Delta)$ is $\epsilon$-klt and $S$ admits a very ample divisor $H$ with $H^{2} \leq c$, then $\Vol(-K_{X}) \leq V(\epsilon,c)$. 
\end{theorem}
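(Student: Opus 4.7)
The plan is to cut $X$ by a generic pullback of the very ample divisor on $S$ and reduce the volume bound to a two-dimensional computation via Lemma \ref{vol}. Let $G = f^*H$ for a sufficiently general $H$ in the given linear system on $S$. By Corollary \ref{tameAdjunction}, $(G, \Delta_G)$ is $\epsilon$-klt, and $g := f|_G : G \to H$ is a tame conic bundle of surfaces over a smooth curve; moreover $G|_G = g^*(H|_H)$ is a sum of at most $H^2 \leq c$ fibres of $g$.

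The main engine is Lemma \ref{vol} applied with $D := -K_X - qG$ and the basepoint-free prime $G$; noting $(D + qG)|_G = -K_X|_G$, this yields
\[
\text{Vol}(X, -K_X) \leq \text{Vol}(X, -K_X - qG) + 3q \, \text{Vol}(G, -K_X|_G)
\]
for every real $q \geq 0$. Choosing $q = q_0$, the pseudo-effective threshold of $-K_X$ along $G$, the first term vanishes, reducing the problem to uniform bounds on $q_0$ and on the surface volume $\text{Vol}(G, -K_X|_G)$, both in terms of $\epsilon$ and $c$ alone.

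For the surface volume, adjunction gives $-K_X|_G = -K_G + G|_G$. I would bound $\text{Vol}(G, -K_G + G|_G)$ by treating $G$ as an $\epsilon$-klt surface conic bundle over the curve $H$: Theorem \ref{cbf} applied to $g$ produces an $\epsilon$-sub klt pair on $H$, and combined with Alexeev's surface boundedness (Theorems \ref{BAB} and \ref{SBAB}) and with $G|_G$ contributing a bounded number of extra fibres, this should yield the desired uniform bound. For $q_0$, I would use that $G$ is basepoint free so that non-pseudo-effectiveness of the restriction $(-K_X - qG)|_G = -K_G - (q-1)G|_G$ forces non-pseudo-effectiveness on $X$; thus $q_0$ is bounded by the pseudo-effective threshold of $-K_G$ along $G|_G$, which again is controlled by the same surface-level bounded data.

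The main obstacle I expect is the surface volume estimate, since $(G, \Delta_G)$ is not a priori log Fano or log Calabi-Yau and surface BAB cannot be applied to $G$ directly. One must instead treat $G|_G$ as part of an enlarged boundary and apply the conic bundle canonical bundle formula (Theorem \ref{cbf}) in dimension two, together with a second application of Lemma \ref{vol} at the surface level, to reduce all numerical inputs to quantities controlled by $\epsilon$ and $c$. Some care is also needed in positive characteristic to invoke the tameness hypothesis exactly where the canonical bundle formula and log resolutions are used.
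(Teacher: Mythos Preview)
Your framework via Lemma~\ref{vol} is reasonable, and the reduction to bounding both $\text{Vol}(G, -K_X|_G)$ and the threshold $q_0$ is correct in outline. However, there is a genuine gap: neither of these quantities can be controlled by surface BAB or the canonical bundle formula as you propose, and the key mechanism the paper actually uses, the Connectedness Lemma, is entirely absent from your sketch.

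The problem with invoking Theorems~\ref{BAB} or~\ref{SBAB} on $G$ is that $(G,\Delta_G)$ is never of log Fano type. By the definition of a conic bundle one has $K_X+\Delta \sim_{\mathbb{Q}} f^*D$ for some divisor $D$ on $S$, so $K_G+\Delta_G = (K_X+\Delta+G)|_G = g^*(D|_H+H|_H)$ is pulled back from the curve $H$; in particular $-(K_G+\Delta_G)$ is never big, and enlarging the boundary by fibre classes does not change this. Applying Theorem~\ref{cbf} to $g$ only tells you that a pair on the \emph{curve} $H$ is $\epsilon$-sub klt, i.e.\ has coefficients below $1-\epsilon$; it says nothing about degrees, so it does not bound $\text{Vol}(G,-K_G+G|_G)$ or the pseudo-effective threshold of $-K_G$ along fibres. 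Your appeal to ``the same surface-level bounded data'' for $q_0$ is therefore circular: that data is exactly what remains unbounded.

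The paper proceeds instead by contradiction and connectedness, at both the surface and threefold levels. It first perturbs $\Delta$ to a boundary $B$ with $-(K_X+B)$ ample, so the Connectedness Lemma is available. To bound $\text{Vol}(-K_X|_G)$ it assumes this is large, uses restriction exact sequences to produce an effective $D \sim_{\mathbb{Q}} -K_X|_G - rF$ on $G$ for large $r$, and builds a pair on $G$ whose non-klt locus is connected yet contains two general fibres, hence dominates $H$; restricting to the generic fibre $F\cong\mathbb{P}^1$ and using that $(F,B|_F)$ is $\epsilon$-klt forces $r \leq \tfrac{2(c+2)}{\epsilon}$. The same scheme one dimension up, now using the threefold Weak Connectedness Lemma, bounds $\text{Vol}(-K_X)$: one produces $D \sim_{\mathbb{Q}} -K_X - tG$ for large $t$, constructs a pair on $X$ whose non-klt locus must dominate $S$, and restricts to the generic fibre of $f$ to bound $t$. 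The connectedness step is what converts ``large volume'' into ``non-klt centre dominating the base'', and this is precisely the missing idea in your proposal.
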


We may further assume that $H$ and $G=f^{*}H$ are smooth. Moreover $H$ may be taken so that $(G,\Delta|_{G})$ is $\epsilon$-klt also by \autoref{tameAdjunction}.

If $\Vol(-K_{X})=0$ the result is trivially true, so we may suppose that $-K_{X}$ is big. In particular we may write $-K_{X}\sim A+E$ where $A$ is ample and $E \geq 0$. Note that $$-K_{X}-(1-\delta)\Delta\sim -\delta K_{X} \sim \delta A + \delta E$$ for any $0 < \delta <1$. Choose $\delta$ such that $(X,(1-\delta)\Delta+\delta E)$ and $(G,(1-\delta)\Delta|_{G}+\delta E|_{G})$ are $\epsilon$-klt and write $B=(1-\delta)\Delta+\delta E$. Then $(X,B)$ is $\epsilon$-log Fano by construction. The proof follows essentially as in characteristic zero, which can be found in \cite{jiang2014boundedness}, but we include a full proof for completeness as some details are modified.

\begin{lemma}\cite[Lemma 6.5]{jiang2014boundedness}
	With notation as above, $\Vol(-K_{X}|_{G}) \leq \frac{8(c+2)}{\epsilon}$. 
\end{lemma}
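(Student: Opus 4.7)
The plan is to pass from the threefold to the surface $G$ via adjunction, and to control the resulting volume on $G$ using the conic-bundle structure together with $\epsilon$-kltness. Since $G=f^{*}H$ is Cartier, adjunction yields $K_{G}=(K_{X}+G)|_{G}$, so
\[
-K_{X}|_{G}\;\equiv\;-K_{G}+G|_{G}\;\equiv\;-K_{G}+c^{\prime}F,
\]
where $F$ is a general fibre of $g:=f|_{G}:G\to H$ and $c^{\prime}:=H^{2}\leq c$. Under the hypotheses of Theorem \ref{setup}, $G\to H$ is a smooth conic bundle over a smooth curve, hence a $\mathbb{P}^{1}$-bundle, so $F\cong\mathbb{P}^{1}$, $F^{2}=0$ and $-K_{G}\cdot F=2$.

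Second, I would apply Lemma \ref{vol} on $G$ with $D=-K_{G}$, $S=F$ (base-point-free, normal and prime) and $q=c^{\prime}$. Using $F|_{F}=0$ and $\text{Vol}(F,-K_{G}|_{F})=\deg(-K_{F})=2$, this gives
\[
\text{Vol}(G,-K_{X}|_{G})\;=\;\text{Vol}(G,-K_{G}+c^{\prime}F)\;\leq\;\text{Vol}(G,-K_{G})+4c^{\prime}\;\leq\;\text{Vol}(G,-K_{G})+4c,
\]
so it suffices to bound $\text{Vol}(G,-K_{G})$ by $\tfrac{8(c+2)}{\epsilon}-4c$.

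Third, to handle $\text{Vol}(G,-K_{G})$ we invoke the $\epsilon$-klt structure. From $K_{X}+\Delta\equiv 0$ and $K_{G}=(K_{X}+G)|_{G}$ one obtains $K_{G}+\Delta_{G}\equiv c^{\prime}F$, where $\Delta_{G}:=\Delta|_{G}$, so $-K_{G}\equiv \Delta_{G}-c^{\prime}F$ and in particular $\Delta_{G}\cdot F=2$. By Corollary \ref{tameAdjunction}, $(G,\Delta_{G})$ is $\epsilon$-klt, so the coefficients of $\Delta_{G}$ are strictly less than $1-\epsilon$. Combining the pinned intersection $\Delta_{G}\cdot F=2$, the coefficient bound, and the $\mathbb{P}^{1}$-bundle geometry of $G$ should yield the required $\tfrac{1}{\epsilon}$-type estimate on $\text{Vol}(-K_{G})$, following Jiang's characteristic-zero computation in \cite{jiang2014boundedness}.

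The main obstacle is this third step. Unlike the situation considered for $(X,B)$, the pair $(G,\Delta_{G})$ is not itself log Fano --- indeed $-K_{G}$ need not be nef or even big --- so BAB-type boundedness results for surfaces cannot be invoked directly. Instead one has to exploit the explicit ruled-surface geometry, together with the $\epsilon$-klt coefficient bound and the rigid intersection $\Delta_{G}\cdot F=2$, to extract the $\tfrac{1}{\epsilon}$ factor by a direct intersection-theoretic computation on $G$.
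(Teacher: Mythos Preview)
Your reduction in Steps~1--2 is correct but does not buy you anything, and Step~3 is where the entire content of the lemma lives. The paper (following Jiang) does \emph{not} split $-K_{X}|_{G}$ as $-K_{G}+c'F$ and then bound $\text{Vol}(-K_{G})$ separately; instead it argues directly by contradiction on $\text{Vol}(-K_{X}|_{G})$. Assuming the volume is large, a section-counting argument produces an effective $D\sim_{\mathbb{Q}}-K_{X}|_{G}-rF$ for $r\gg 0$; one then forms the pair $(G,(1-\tfrac{k+2}{r})B|_{G}+\tfrac{k+2}{r}D+F_{1}+F_{2})$, checks that its anticanonical class is nef and big, and applies the surface Connectedness Lemma (Theorem~\ref{Tcl}) to force a non-klt centre dominating $H$. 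Restricting to a general fibre $F\cong\mathbb{P}^{1}$ and using that $(F,B|_{F})$ is $\epsilon$-klt forces $\tfrac{2(k+2)}{r}\geq\epsilon$, contradicting the size of $r$. This Connectedness Lemma step is precisely how the $\tfrac{1}{\epsilon}$ factor enters; it is not obtained by any ``direct intersection-theoretic computation on $G$''.

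Your hoped-for Step~3 cannot be completed the way you describe. The quantity $\text{Vol}(-K_{G})$ is a geometric invariant of $G$ alone, and the $\epsilon$-klt coefficient bound on $\Delta_{G}$ together with $\Delta_{G}\cdot F=2$ does not by itself constrain it: there is no mechanism for $\epsilon$ to appear in a bound on $\text{Vol}(-K_{G})$ without something like the connectedness argument above. Indeed, if you tried to carry out Step~3 rigorously you would end up reproducing the paper's proof with $c'=0$, so your reduction is circular rather than simplifying. A minor correction: $G\to H$ is a smooth surface fibred in conics, but it is \emph{not} a $\mathbb{P}^{1}$-bundle --- a general $H$ meets the discriminant curve of $f$, so $G\to H$ has finitely many reducible fibres. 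This does not affect $F^{2}=0$ or $-K_{G}\cdot F=2$, but it does mean your appeal to ``explicit ruled-surface geometry'' would have to account for these degenerations as well.
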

\begin{proof}
	Suppose for contradiction $\Vol(-K_{X}|_{G}) >\frac{8(c+2)}{\epsilon}$ and choose $r$ rational with $\Vol(-K_{X}|_{G}) > 4r >\frac{8(c+2)}{\epsilon}$.
	
	Write $F$ for the general fibre of $G \to H$. Then $G|_{G}=H^{2}F=kF$ and for suitably divisible $m$ and any $n$ we have the following short exact sequence.
	
	\[0 \to \mathcal{O}_{G}(-mK_{X}|_{G}-nF) \to \mathcal{O}_{G}(-mK_{X}|_{G}-(n-1)F) \to \mathcal{O}_{F}(-mK_{F}) \to 0\]
	
	In particular then $h^{0}(G,-mK_{X}|_{G}-nF) \geq h^{0}(G,-mK_{X}|_{G}-(n-1)F)-h^{0}(F,-mK_{F})$.
	Hence by induction we have $h^{0}(G,-mK_{X}|_{G}-nF) \geq h^{0}(G,-mK_{X}|_{G})-n\cdot h^{0}(F,-mK_{F})$.
	
	Note however that, letting $n=mr$ we have $$\lim_{m \to \infty} \frac{2}{m^{2}}(h^{0}(G,-mK_{X}|_{G})-n\cdot h^{0}(F,-mK_{F}))= \Vol(-K_{X}|_{G})-2r\Vol(-K_{F}) > 0$$ since $F$ is a smooth rational curve. Hence $-mK_{X}|_{G}-mrF$ admits a section for $m$ sufficiently large and divisible. Choose an effective $D\sim_{\mathbb{Q}} -K_{X}|_{G}-rF$.
	
	Consider now \[(G,(1-\frac{k+2}{r})B|_{G}+\frac{k+2}{r}D+F_{1}+F_{2})\]
	 for two general fibres $F_{1}, F_{2}$.
	This has \begin{align*}
	&-K_{G}+(1-\frac{k+2}{r})B|_{G}+\frac{k+2}{r}D+F_{1}+F_{2}	\\
	\sim & -(K_{X}|_{G}+kF)+\frac{k+2}{r})B|_{G}+\frac{k+2}{r}(-K_{X}|_{G}-rF)+F_{1}+F_{2} \\	
	\sim & -(1-\frac{k+2}{r})(K_{X}+B)|_{G} \\
	\end{align*}
	and hence we may apply the Connectedness Lemma for surfaces, \autoref{Tcl}, to see that its non-klt locus is connected. Note that we have $r> c+2 \geq k+2$ and so as $-(K_{X}+B)$ is ample, this pair satisfies the assumptions of the Connectedness Lemma.
	
	Since both $F_{1}$ and $F_{2}$ are contained in the non-klt locus, there must be a non-klt center $W$ dominating $H$. Thus it follows that $(F,(1-\frac{k+2}{r})B|_{F}+\frac{k+2}{r}D|_{F})$ is non-klt. However $(F,(1-\frac{k+2}{r})B|_{F})$ is $\epsilon$-klt so we must have $\deg (\frac{k+2}{r}D|_{F})\geq \epsilon$. Finally since $D|_{F}\sim -K_{X}|_{F}=-K_{F}$ we have $\deg(D|_{F})=2$ and hence $\frac{2(c+2)}{r} \geq \frac{2(k+2)}{r} \geq \epsilon$, contradicting the choice of $r$.
\end{proof}

\begin{proof}[Proof of \autoref{J1}]
	
	Take $V(\epsilon,c)=\frac{144(c+2)}{\epsilon^{2}}$ suppose for contradiction that $ \Vol(-K_{X}) > \frac{144(c+2)}{\epsilon^{2}}$. Choose $t$ with $\Vol(-K_{X})> t\cdot\frac{24(c+2)}{\epsilon} > \frac{144(c+2)}{\epsilon^{2}}$ and consider the following short exact sequence.
	\[0 \to \ox(-mK_{X}-nG)\to \ox(-mK_{X}+(n-1)G)\to \mathcal{O}_{G}(-mK_{X}|_{G}-(n-1)G) \to 0\]
	
	Arguing as before we see that $h^{0}(X,-mK_{X}-tmG)$ grows like $\frac{r}{6}m^{3}$ with $r\geq \Vol(-K_{X})-3t\Vol(-K_{X}|_{G}) >0$ by the previous lemma. In particular we may find $D \sim_{\mathbb{Q}} -K_{X}+tG$.
	
	Let $\pi\colon Y \to X$ be a log resolution of $(X, (1-\frac{3}{t})B+\frac{3}{t}D)$. We may write $K_{Y}+\Delta_{Y}+E=\pi^{*}(K_{X}+(1-\frac{3}{t})B+\frac{3}{t}D)$ where $(Y,\Delta_{Y})$ is sub klt and $E$ is supported on the non-klt places of $(X, (1-\frac{3}{t})B+\frac{3}{t}D)$. 
	
	As shown by Tanaka in \cite[Theorem 1]{tanaka2017semiample}, since $|L|=\pi^{*}f^{*}|H|$ is base point free there is some $m$ with $(Y,\Delta_{Y}+\frac{1}{m}(L_{1}+L_{2}+L_{3}))$	 still klt for every choice of $L_{i} \in |L|$. In particular, fixing some general $z\in Z$ we may take $H_{i} \in |H|$ meeting $Z$ for $1\leq  i \leq 2m$ such that for any $I \subseteq \{0,1,...,2m\}$ with $|I| =3$ the following hold:
	
	\begin{itemize}
		\item $(Y,\Delta_{Y}+\sum_{i \in I}\frac{1}{m}\pi^{*}f^{*}H_{i})$ is klt;
		\item $\bigcap_{i\in I} H_{i}={z}$.
	\end{itemize} 
	
	Thus we must have 
	\[\nklt(X,(1-\frac{3}{t})B+\frac{3}{t}D)=\nklt(X, (1-\frac{3}{t})B+\frac{3}{t}D+\frac{1}{m}f^{*}H_{i})\]
	for each $i$. 
	
	Let $F$ be the fibre over $z$ and $G_{1} = \sum_{i=1}^{2m} \frac{1}{m}H_{i}$. Then clearly $\text{mult}_{F}(G_{1}) \geq 2$ and hence $(X,G)$ cannot be klt at $F$. By construction we have
	
	\[\nklt(X, (1-\frac{3}{t})B+\frac{3}{t}D) \cup F = \nklt(X, (1-\frac{3}{t})B+\frac{3}{t}D+G_{1}).\]
	
	Similarly we may further take $G_{2} \sim f^{*}H$ not containing $F$ such that
	\[\nklt(X, (1-\frac{3}{t})B+\frac{3}{t}D+G_{1}+G_{2}) = \nklt(X, (1-\frac{3}{t})B+\frac{3}{t}D+G_{1}).\]
	
	Now $-(K_{X}+(1-\frac{3}{t})B+\frac{3}{t}D+G_{1}+G_{2}) \sim (1-\frac{3}{t})(K_{X}+B)$ is ample, so we may apply the Connectedness Lemma, \autoref{WCL}, to see there is a curve in the non-klt locus of $(X,(1-\frac{3}{t})B+\frac{3}{t}D)$ meeting $F$. In particular then the non-klt locus dominates $S$. Hence we must also have that $(F,(1-\frac{3}{t})B|_{F}+\frac{3}{t}D|_{F})$ is not klt. However $(F,B|_{F})$ is $\epsilon$-klt and $F$ is a smooth rational curve. Therefore by degree considerations, since $-K_{X}|_{F} \sim D|_{F}$ we must have $t \leq \frac{6}{\epsilon}$, contradicting our choice of $t$. 
\end{proof}

\begin{theorem}[Ambro-Jiang Conjecture for surfaces]\label{aj}\cite[Theorem 2.8]{jiang2014boundedness}
	Fix $0<\epsilon<1$. There is a number $\mu(\epsilon)$ depending only on $\epsilon$ such that for any surface $S$ over any closed field $k$, if $S$ has a boundary $B$ with $(S,B)$ $\epsilon$-klt weak log Fano then \[\inf \{ulct (S,B;G) \text{ where } G \sim_{\mathbb{Q}}-(K_{S}+B) \text{ and } G+B \geq 0\}\geq \mu(\epsilon)\]
\end{theorem}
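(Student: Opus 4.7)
The approach is to exploit boundedness of $\epsilon$-klt weak log Fano surfaces and extract a uniform threshold via a simultaneous log resolution. By Theorem \ref{BAB}, the underlying surfaces $S$ form a family bounded over $\mathrm{Spec}\,\mathbb{Z}$ with bounds depending only on $\epsilon$. Place all such $S$ as geometric fibers of a flat projective family $\mathcal{S} \to T$ with $T$ reduced and quasi-projective of finite type. Boundedness yields a uniform Cartier index $N = N(\epsilon)$ for $K_S$, a relatively very ample $\mathcal{H}$ with fiber-wise $H^2 \leq c(\epsilon)$, and numerical bounds $(-K_S)^2 \leq V(\epsilon)$ and $-K_S \cdot H \leq d(\epsilon)$.

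Since $-(K_S+B)$ is nef and $B \geq 0$, one has $B \cdot H \leq -K_S \cdot H \leq d$, and similarly $(G+B) \cdot H \leq d$. Multiplying by $N$ gives bounded Hilbert polynomials, so both $B$ and $G+B$ sit in bounded families of effective divisors on $\mathcal{S} \to T$. Stratify $T$ into finitely many locally closed subsets over each of which $\mathcal{B}$ and $\mathcal{G}$ are relative divisors, and over each of which a simultaneous log resolution $\pi \colon \tilde{\mathcal{S}} \to \mathcal{S}$ of $(\mathcal{S}, \mathcal{B} + \mathcal{G})$ exists; this relies on resolution of singularities for excellent surfaces, available in arbitrary characteristic.

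On each stratum, writing $K_{\tilde{S}} + \tilde{B} = \pi^{*}(K_S + B)$, the $\epsilon$-klt hypothesis forces every coefficient of $\tilde{B}$ to be strictly less than $1 - \epsilon$. Only finitely many prime divisors $E_i$ appear in the support of $\pi^{*} G$, and their multiplicities are uniformly bounded: for any smooth point $p \in S$ a general curve in $|H|$ through $p$ yields $\mathrm{mult}_p(G) \leq G \cdot H \leq d$, and uniform singularity types on the stratum transfer this estimate to all points together with the combinatorics of $\pi$. Because $T$ has finitely many strata, there is $\mu = \mu(\epsilon) > 0$ with $\mu \cdot \mathrm{mult}_{E_i}(\pi^{*} G) \leq \epsilon$ for every $E_i$ on every fiber. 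Then $\pi^{*}(K_S + B + \mu G)$ has all coefficients strictly below $1$, so $(S, B + \mu G)$ is klt and $\mathrm{ulct}(S, B; G) \geq \mu(\epsilon)$.

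\textbf{Main obstacle:} Controlling the multiplicities of $\pi^{*} G$ uniformly across the bounded family. Stratification of $T$ together with Noetherian induction reduces this to a fixed stratum, where the exceptional locus of $\pi$ is fixed and intersection with the relatively ample $\mathcal{H}$ gives the required bound; the subtlety is ensuring that the local multiplicity estimate at singular points of $S$ deteriorates only by a factor depending on the bounded singularity type, not on the particular $S$. A secondary issue is securing the simultaneous log resolution in mixed characteristic, handled by the unconditional two-dimensional resolution theorems.
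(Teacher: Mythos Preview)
The paper does not supply its own proof of this theorem; it cites Jiang and remarks that Jiang's argument is an explicit arithmetic computation on $\mathbb{P}^{2}$ and on the Hirzebruch surfaces $\mathbb{F}_{n}$ with $n \leq 2/\epsilon$, hence carries over unchanged to any algebraically closed field and yields an explicit $\mu(\epsilon)$. Concretely, one passes to a minimal resolution and then contracts to a minimal model, which is one of these finitely many surfaces, and bounds the threshold there by direct intersection-theoretic estimates.

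Your route is genuinely different: you try to extract $\mu(\epsilon)$ from abstract boundedness via stratification and simultaneous log resolution. In characteristic zero this is a standard strategy, but here several steps are not justified. First, Theorem~\ref{BAB} is stated over a fixed algebraically closed field; to obtain a single family over $\mathrm{Spec}\,\mathbb{Z}$ you would need the argument for $\hat{S}$ inside the proof of Theorem~\ref{SBAB} (noting that $-(K_{S}+B)$ big forces $-K_{S}$ big, so $K_{S}\not\equiv 0$), which you do not invoke. Second, and more seriously, you assert the existence of a simultaneous log resolution of $(\mathcal{S},\mathcal{B}+\mathcal{G})$ after a finite stratification of a base of mixed characteristic. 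The usual Noetherian-induction proof of this uses generic smoothness of the resolution morphism over the base, which fails in positive characteristic; for surfaces one can try to argue by hand via blowups of sections, but this is neither in the paper nor a standard black box, and you give no argument. Third, once you enlarge $T$ to parametrise the triples $(S,B,G)$, the assertion that the multiplicities of $\pi^{*}G$ along the finitely many exceptional divisors are uniformly bounded on each stratum is exactly what needs proof, and your degree estimate $\mathrm{mult}_{p}(G)\leq G\cdot H$ only controls multiplicities at smooth points of $S$, not the coefficients appearing after an iterated blowup.

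Jiang's approach sidesteps all of this: the reduction to finitely many explicit surfaces means no families, no stratification, and no resolution over a mixed-characteristic base are needed, and the resulting bound is explicit rather than merely existential. If you want to salvage your argument, the honest way is to replace the stratification step by the same reduction to $\mathbb{P}^{2}$ and $\mathbb{F}_{n}$ and then compute; at that point you have reproduced Jiang's proof.
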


Here $ulct (S,B;G)= \sup\{t\colon  (S,B+tG) \text{ is lc and } 0 \leq t \leq 1\}$ and in particular it is at most the usual lct, if $G$ is effective.

Though the proof is given for characteristic zero, it is essentially an arithmetic proof that the result holds for $\mathbb{P}^{2}$ and $\mathbb{F}_{n}$ for $n \leq \frac{2}{\epsilon}$. The arguments of the proof work over any algebraically closed field and as the bound is given explicitly in terms of $\epsilon$ it is independent of the base field.

By applying this result to a general fibre of a Mori fibration over a curve we obtain the desired boundedness result.

\begin{theorem}\label{J2}
	Pick $\epsilon>0$. Suppose that $f\colon X \to \mathbb{P}^{1}$ is a terminal threefold Mori fibre space with smooth generic fibre over a closed field of characteristic $p> 5$. If there is a pair $(X,\Delta)$ which is $\epsilon$-LCY then $\Vol(-K_{X})\leq W(\epsilon)$ for some $W(\epsilon)$ depending only on $\epsilon$.
\end{theorem}

\begin{proof}
	By \autoref{nAmple}, there is some $t(\epsilon)\geq 1$ depending only on $\epsilon$ with $-K_{X}+t(\epsilon)F$ ample, where $F$ is a general fibre.
	
	Let $\mu=\mu(1)$ as given in \autoref{aj} and take $W(\epsilon)= \frac{27(t(\epsilon)+2)}{\mu}$. Suppose for contradiction $\Vol(-K_{X}) > W(\epsilon)$ and choose $s$ rational with $\Vol(-K_{X}) > 27s > W(\epsilon)$. Clearly $s > \frac{(t(\epsilon)+2)}{\mu} > t(\epsilon)+2$. 
	
	For any $n$ and for sufficiently divisible $m$, we have the following short exact sequence.
	\[0 \to \ox (-mK_{X}-nF) \to \ox(-mK_{X}-(n-1)F) \to \mathcal{O}_{F}(-mK_{F})\to 0.\]
	This gives $h^{0}(X,-mK_{X}-nF) \geq h^{0}(X,-mK_{X})-nh^{0}(F,-mK_{F})$ and subsequently 
	\[\lim_{m \to \infty} \frac{6}{m^{3}}(h^{0}(X,-mK_{X})-smh^{0}(F,-mK_{F})= \Vol(-K_{X})-3s\Vol(-K_{F}).\] Since $F$ is a smooth del Pezzo surface we have $\Vol(-K_{F})\leq 9$.  So by construction $-mK_{X}-smF$ is effective for large, divisible $m$. 
	
	Choose $D\geq 0$ with $D \sim_{\mathbb{Q}} -K_{X}-sF$ and consider $(X,\frac{t(\epsilon)+2}{s}D+F_{1}+F_{2})$ for $F_{1},F_{2}$ general fibres. By construction we have
	\begin{align*}
	-(K_{X}+\frac{t(\epsilon)+2}{s}D+F_{1}+F_{2})&\sim -(K_{X}-\frac{t(\epsilon)+2}{s}K_{X}-t(\epsilon)F)\\
	&\sim (1-\frac{t(\epsilon)+2}{s})(-K_{X}+t(\epsilon)F) + \frac{t(\epsilon)(t(\epsilon)+2)}{s}F
	\end{align*}
	which is ample since $F$ is nef and $-K_{X}+t(\epsilon)F$ is ample.
	Then the Connectedness Lemma, \autoref{WCL}, gives that the non-klt locus is connected, and clearly contains $F_{1},F_{2}$, so it must contain a non-klt center $W$ which dominates $\mathbb{P}^{1}$. Thus it must be that $(F,\frac{t(\epsilon)+2}{s}D|_{F})$ is not klt. However $F$ is smooth, and equivalently terminal, with $-K_{F}\sim D|_{F}$ ample, so by \autoref{aj} it follows that $\frac{t(\epsilon)+2}{s} \geq lct(F,0;D|_{F}) \geq \mu=\mu(1)$. Thus we have $s \leq \frac{t(\epsilon)+2}{\mu}$ contradicting our choice of $s$ and proving the result.
\end{proof}

\section{Birational Boundedness}\label{S-res}

We are now ready to prove the main theorems using the results of the previous sections.

\begin{lemma}\label{S1}
	Suppose that $(X,\Delta)$ is an $\epsilon$-klt LCY pair in characteristic $p>5$, with $\Delta \neq 0$ and $X$ both rationally chain connected and $F$-split. Then there is a birational map $\pi\colon X \dashrightarrow X'$ such that $X'$ has a Mori fibre space structure $X' \to Z$ and $\Delta'=\pi_{*}\Delta$ on $X'$ making $(X',\Delta')$ klt and $\epsilon$-LCY. Further both $X'$ and $Z$ are rationally chain connected and $F$-split and if $X$ is terminal, so is $X'$.
\end{lemma}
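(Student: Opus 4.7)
The plan is to run a $K_X$-minimal model program. First note that $K_X$ is not pseudo-effective: since $K_X + \Delta \equiv 0$ with $\Delta$ effective and nonzero, $-K_X$ is pseudo-effective, and if $K_X$ were too then both would be, forcing $K_X \equiv 0$ and hence $\Delta = 0$ (as $\Delta$ is effective), contradicting the hypothesis. After passing to a small $\mathbb{Q}$-factorialization if necessary, which exists for klt threefolds in characteristic $p>5$ and preserves $F$-splitness, rational chain connectedness, terminality, and the $\epsilon$-klt LCY condition, Theorem~\ref{Cone Theorem} applied to the klt pair $(X,0)$ yields a $K_X$-MMP terminating in a Mori fibre space $f : X' \to Z$.

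I then verify that the required properties transfer to $X'$. Because $K_X + \Delta \equiv 0$, each step of the MMP is also $(K_X+\Delta)$-trivial, so the corollary following Lemma~\ref{comparisonLemma} shows that $(X', \Delta' := \pi_{*}\Delta)$ is again $\epsilon$-klt LCY, with $\Delta'$ effective since $\Delta$ is. The discussion at the end of Section~2 explicitly records that $F$-splitness of the underlying variety is preserved under any $(K_X+B)$-MMP, so $X'$ is $F$-split. Terminality is preserved because discrepancies of any fixed valuation only increase along a $K_X$-negative MMP, and rational chain connectedness survives each birational contraction of projective varieties.

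It remains to handle the base $Z$. Rational chain connectedness descends from $X'$ along the dominant morphism $f$ by the observation at the end of the Definitions section. For $F$-splitness, since $f$ is an extremal contraction we have $f_{*}\mathcal{O}_{X'} = \mathcal{O}_Z$, and one can push a Frobenius splitting of $X'$ directly: a splitting $F^{e}_{*}\mathcal{O}_{X'} \to \mathcal{O}_{X'}$ applies $f_{*}$ to produce $F^{e}_{*}\mathcal{O}_Z = f_{*}F^{e}_{*}\mathcal{O}_{X'} \to f_{*}\mathcal{O}_{X'} = \mathcal{O}_Z$, and pushing the splitting identity on $X'$ shows the composite with the Frobenius on $Z$ is the identity, so $Z$ is $F$-split. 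Alternatively the pushforward lemma \cite{das2015f}[Theorem~5.2] applied to a sub $F$-split structure $(X',\Delta_\phi)$ with $K_{X'}+\Delta_\phi \sim_{\zp} 0$ and $\Delta_\phi \geq 0$ gives a sub $F$-split structure on $Z$.

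The step I expect to require the most care is the simultaneous preservation during the MMP: while each individual preservation statement ($\epsilon$-klt LCY via Lemma~\ref{comparisonLemma}, $F$-split via the pushforward discussion, terminality via discrepancy monotonicity, RCC via birational invariance) is standard, one must run a single MMP that respects all of them at once, and in particular one needs $\Delta'$ to remain effective and nonzero so the Mori fibration structure is meaningful. Once $X'$ is obtained with the correct properties, the transfer to $Z$ is largely formal from the Mori fibre space hypothesis $f_{*}\mathcal{O}_{X'}=\mathcal{O}_Z$.
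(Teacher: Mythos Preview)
Your proposal is correct and follows essentially the same approach as the paper: run a $K_X$-MMP (using that $K_X$ is not pseudo-effective since $\Delta\neq 0$), push forward $\Delta$ using the $(K_X+\Delta)$-triviality of each step, and invoke the standard preservation of $F$-splitness, rational chain connectedness, and terminality under the MMP and along the fibration to $Z$. You supply a few details the paper leaves implicit (the $\mathbb{Q}$-factorialisation, the explicit pseudo-effectivity argument, and the direct pushforward of the Frobenius splitting to $Z$); your closing worry about $\Delta'$ remaining nonzero is unnecessary, since the Mori fibre space structure is guaranteed by non-pseudo-effectivity of $K_X$ on the original $X$, independently of $\Delta'$.
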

\begin{proof}
	Replacing $X$ by a $\mathbb{Q}$-factorialisation, we can assume $X$ is $\mathbb{Q}$-factorial. This can be done by \autoref{dlt}.
	
	Since $(X,\Delta)$ is klt so is $(X,0)$ and hence we may run a terminating $K_{X}$ MMP $X=X_{0} \dashrightarrow X_{1} \dashrightarrow... \dashrightarrow X_{n}=X'$ by \autoref{Cone Theorem}. At each step $X_{i} \dashrightarrow X_{i+1}$ we may pushforward $\Delta_{i}$ to $\Delta_{i+1}$, which is still klt since $K_{X}+\Delta \equiv 0$. Similarly since $X_{i}$ is $F$-split and rationally chain connected, so is $X_{i+1}$ as these are preserved under birational maps of normal varieties. Since $K_{X}$ cannot be pseudo-effective, $X'$ has a Mori fibre space structure $X' \to Z$, where $Z$ is also rationally chain connected and $F$-split. If $X$ is terminal we may run a $K_{X}$ MMP terminating at a terminal variety, hence $X'$ is terminal also.	
\end{proof}

\begin{reptheorem}{Main}
		Fix $0 < \delta, \epsilon <1$. Let $S_{\delta,\epsilon}$ be the set of threefolds satisfying the following conditions:
\begin{itemize}
	\item $X$ is a projective variety over an algebraically closed field of characteristic $p >7, \frac{2}{\delta}$;
	\item $X$ is terminal, rationally chain connected and $F$-split;
	\item $(X,\Delta)$ is $\epsilon$-klt and log Calabi-Yau for some boundary $\Delta$; and
	\item The coefficients of $\Delta$ are greater than $\delta$.
\end{itemize}

Then there is a set $S'_{\delta,\epsilon}$, bounded over $\text{Spec}(\mathbb{Z})$ such that any $X\in S_{\delta,\epsilon}$ is either birational to a member of $S'_{\delta,\epsilon}$ or to some $X'\in S_{\delta,\epsilon}$, Fano with Picard number $1$.
\end{reptheorem}
\begin{proof}
	Take any $(X,\Delta)\in S$ and replace it by a Mori fibre space $(X',\Delta') \to Z$ by \autoref{S1}. Then $Z$ is $F$-split and rationally chain connected. If $Z$ is a surface then $p>\frac{2}{\delta}$ ensures that $(X',\Delta')\to Z$ is a tame conic bundle by \autoref{S2}. In particular $Z$ admits a boundary $\Delta_{Z}$ such that $(Z,\Delta_{Z})$ is $\epsilon$-LCY by \autoref{cbf}. Hence by BAB for surfaces, \autoref{SBAB}, there is $|A|$ a very ample linear system on $Z$ with $A^{2}\leq c$ for some $c$ independent of $X,\Delta,Z$.
	
	On the other hand, if $Z$ is a curve then it is a smooth rational curve and $p>7$ gives that the general fibre of $X \to Z$ is smooth by \autoref{smoothness}. Let then $S'_{\delta,\epsilon,V}$ be set of such Mori fibre space $(X',\Delta') \to Z$ with $Z$ not a point and $\Vol(-K_{X})\leq V(\epsilon,c)$. In both cases we conclude by \autoref{setup} that the set is birationally bounded.	
	\end{proof}

\begin{reptheorem}{Main2}
	Fix $0 < \delta, \epsilon <1$ and let $T_{\delta,\epsilon}$ be the set of threefold pairs $(X,\Delta)$ satisfying the following conditions
	\begin{itemize}
		\item $X$ is projective over a closed field of characteristic $p >7,\frac{2}{\delta}$;
		\item $X$ is terminal, rationally chain connected and $F$-split;
		\item $(X,\Delta)$ is $\epsilon$-klt and LCY;
		\item The coefficients of $\Delta$ are greater than $\delta$; and
		\item $X$ admits a Mori fibre space structure $X \to Z$ where $Z$ is not a point.
	\end{itemize}
	Then the set $\{\Vol(-K_{X}) \colon \exists \Delta \text{ with } (X,\Delta) \in T_{\delta,\epsilon}\}$ is bounded above. 
\end{reptheorem}

\begin{proof}
	
	Take $(X,\Delta) \in T_{\delta,\epsilon}$ and let $X \to Z$ be the associated Mori fibre space structure. If $Z$ is a curve then we conclude that $\Vol(-K_{X})$ is bounded by \autoref{J2} in light of \autoref{smoothness}. If instead $Z$ is a surface then the set of possible such $Z$ is bounded by \autoref{SBAB} and \autoref{cbf} as above. Hence we conclude the claim by \autoref{J1}.
\end{proof}

\addtocontents{toc}{\protect\setcounter{tocdepth}{-1}}
\section*{Acknowledgements}
I would like to thank Paolo Cascini for his constant, and unerringly patient, support in writing this. His help was indispensable throughout the process, from the choice of topic itself to the resolution of many issues in earlier drafts. Thanks also to the reviewer for their careful reading and helpful comments.

I would also like to thank Federico Bongiorno for answering all the questions I was too embarrassed to ask Paolo, and finally the EPSRC for my funding. 

\bibliography{BoundRef}
\bibliographystyle{amsalpha}

\end{document}